\theoremstyle{definition}
\newtheorem{definition}{Definition}[section]
\theoremstyle{plain}
\newtheorem{lemma}[definition]{Lemma}
\newtheorem{theorem}[definition]{Theorem}
\newtheorem{proposition}[definition]{Proposition}
\newtheorem{corollary}[definition]{Corollary}
\theoremstyle{remark}
\newtheorem{remark}[definition]{Remark}
\newtheorem{example}[definition]{Example}
\newcommand{\myint}{\operatorname{int}}
\newcommand{\mycl}{\operatorname{cl}}
\newcommand{\convl}{\operatorname{Conv}_{\mathrm{left}}}
\newcommand{\convr}{\operatorname{Conv}_{\mathrm{right}}}
\newcommand{\awk}{\preceq_{\text{awk}}}
\newcommand{\sgn}{\operatorname{sgn}}
\newcommand{\myindr}{\mathcal R_{\text{ind}}}
\newcommand{\myds}{\operatorname{D}_\Sigma}
\begin{document}
\title[Definable compactness]{Definable compactness in definably complete locally o-minimal structures}
\author[M. Fujita]{Masato Fujita}
\address{Department of Liberal Arts,
Japan Coast Guard Academy,
5-1 Wakaba-cho, Kure, Hiroshima 737-8512, Japan}
\email{fujita.masato.p34@kyoto-u.jp}

\begin{abstract}
We demonstrate that And\'ujar Guerrero, Thomas and Walsberg's results on definable compactness in o-minimal structures still hold true in definably complete locally o-minimal structures.
As an application, we show that a definably simple definable topological group which is regular, Hausdorff and definably compact as a definable topological space is either discrete or definably connected.  
We also study the definable quotient of definable continuous actions by definably compact definable topological groups.
\end{abstract}

\subjclass[2020]{Primary 03C64; Secondary 54D30, 22C05, 54H11}

\keywords{local o-minimality; definable compactness; definable topological group}

\maketitle
\section{Introduction}\label{sec:intro}
Local o-minimal structure localizes the definition of o-minimal structures.
The main targets of this paper are locally o-minimal structures.
Readers who have interest in o-minimal structures should consult van den Dries's book \cite{vdD}.
Locally o-minimal structures  were first studied by Toffalori and Vozoris \cite{TV}.
The works conducted by Fornasiero \cite{F} and Kawakami et al. \cite{KTTT} followed it.
The author and his collaborators also have studied definable complete locally o-minimal structures and its relatives \cite{Fuji, Fuji3, Fuji4, Fuji6, Fuji7, FKK}.
In \cite{FKK}, we studied definably compact definable sets, which are main subjects of this paper, when definable bounded multiplication is available.

The studies of o-minimal structures are the source of conjectures for locally o-minimal structures.
A typical conjecture is whether properties which hold true for o-minimal structures are remained to hold true even in locally o-minimal structures.
We consider definable compactness in this paper.
It is proposed in \cite{PS} in the o-minimal setting.
And\'ujar  Guerrero, Thomas and Walsberg presented a new characterization of definably compact  definable sets in \cite{GTW} when the structure is o-minimal.
The question is whether the assertions in \cite{GTW} remain to hold true in definably complete locally o-minimal structures.
We give affirmative answers after a slight modification of the assertions.
It is the first contribution of this paper.

The second contribution of this paper is an application of the above result.
We show that a definably simple definable topological group which is definably compact is either discrete or definably connected.  
This assertion is a definable version of the well-known result that a simple topological group is either discrete or connected.
We do not need the assumption that the topological group is compact in the purely topological setting.
This classical fact immediately follows from the fact that a connected component of a topological group is a normal subgroup.
In the o-minimal setting, we can use the notion of definably connected components instead of that of connected components.
The classical proof works also in this case.
However, a definably connected component of a definable set is not still well-defined in a definably complete locally o-minimal structure.
For instance, we do not know whether a connected component of a definable set is definably connected in a locally o-minimal expansion of the set of reals $\mathbb R$.
We have to prove the above assertion without using the notion of definably connected components.
We also demonstrate that a definable topological group $G$ is definably compact if both its definable normal subgroup $H$ and the definable quotient $G/H$ are definably compact.
Finally, we show that the quotient $X/G$ of a definable metric space by a definably compact definable group $G$ is again a definable metric space when $G$ acts on $X$ definably and continuously.

This paper is organized as follows:
We need the fact that a definable continuous function on a definable, closed and bounded set is uniformly continuous.
It is proved in Section \ref{sec:preliminary}.
In Section \ref{sec:directed}, we first demonstrate that \cite[Theorem 8]{GTW} holds true even if the given structure is definably complete locally o-minimal expansion of an ordered group.
We recall the definition of definable topologies and give equivalence conditions for definable sets to be definably compact in Section \ref{sec:compact}.
Finally, Section \ref{sec:group} is devoted to the study of definable topological groups which are definably compact.

We introduce the terms and notations used in this paper. The term ‘definable’ means ‘definable in the given structure with parameters’ in this paper.
For an ordered group $(M,0,<)$, we set $M^{>0}:=\{x \in M\;|\; x > 0\}$.
Unless specified, we assume that $M$ equips the order topology induced from the linear order $<$ and the topology on $M^n$ is the product topology of the order topology on $M$.
For a topological space $(X,\tau)$ with the topology $\tau$, $\myint_{\tau}(A)$, $\mycl_{\tau}(A)$ and $\partial_{\tau}A$ denotes the interior, the closure and the frontier of a subset $A$ of $X$, respectively.
We drop the subscript $\tau$ when the topology $\tau$ is obvious from the context.

\section{Preliminary}\label{sec:preliminary}
We first recall the definitions of local o-minimality and definable completeness.
\begin{definition}[\cite{M,TV}]
	An expansion of a dense linear order without endpoints $\mathcal M=(M,<,\ldots)$ is \textit{locally o-minimal} if, for every definable subset $X$ of $M$ and for every point $a\in M$, there exists an open interval $I$ containing the point $a$ such that $X \cap I$ is  a finite union of points and open intervals.
	We can immediately show that $X \cap I$ is the union of at most one point and at most two open intervals if we choose $I$ appropriately.
	The expansion $\mathcal M$ is \textit{definably complete} if any definable subset $X$ of $M$ has the supremum and  infimum in $M \cup \{\pm \infty\}$.
\end{definition}

\begin{definition}
	Consider a structure.
	A \textit{definable equivalence relation} $E$ on a definable set $X$ is a definable subset of $X \times X$ such that the binary relation $\sim_E$ defined by $x \sim_E y \Leftrightarrow (x,y) \in E$ is an equivalence relation defined on $X$.
\end{definition}

The following definable choice lemma is often used in this paper.
\begin{lemma}[Definable choice lemma]\label{lem:definable_choice}
	Consider a definably complete locally o-minimal expansion of an ordered group $\mathcal M=(M,<,0,+\ldots)$.
Let $\pi:M^{m+n} \rightarrow M^m$ be a coordinate projection.
Let $X$ and $Y$ be definable subsets of $M^m$ and $M^{m+n}$, respectively,  satisfying the equality $\pi(Y)=X$.
There exists a definable map $\varphi:X \rightarrow Y$ such that $\pi(\varphi(x))=x$ for all $x \in X$.
Furthermore, if $E$ is a definable equivalence relation defined on a definable set $X$, there exists a definable subset $S$ of $X$ such that $S$ intersects at exactly one point with each equivalence class of $E$. 
\end{lemma}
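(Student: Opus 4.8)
The plan is to prove both assertions by first reducing to a one-dimensional selection problem and then performing an explicit selection using definable completeness together with local o-minimality. Throughout I will freely use parameters (as permitted), and in particular I fix once and for all an element $c_0 \in M^{>0}$, which exists since $<$ is a dense order without endpoints. For the first assertion I argue by induction on $n$. Writing $\pi$ as the composite of the projection $\rho\colon M^{m+n} \to M^{m+n-1}$ forgetting the last coordinate and the projection $\sigma\colon M^{m+n-1} \to M^m$, I set $Z = \rho(Y)$, so that $\sigma(Z) = X$; the inductive hypothesis yields a section $\psi\colon X \to Z$ of $\sigma$, and the case $n=1$ applied to $Y \subseteq Z \times M$ over $Z$ supplies a section of $\rho$. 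Composing these gives $\varphi$. Thus everything reduces to the case $n = 1$: given a definable $A_x \subseteq M$ depending on a parameter $x \in X$, each $A_x$ nonempty, I must choose a point $s(A_x) \in A_x$ uniformly and definably in $x$.

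To construct $s(A)$ for a nonempty definable $A \subseteq M$, I use $a = \inf A$ and $b = \sup A$ in $M \cup \{\pm\infty\}$, which exist by definable completeness. If $a \in M$ and $a \in A$, I take $s(A) = a$. If $a \in M$ but $a \notin A$, then local o-minimality at $a$ forces an interval $(a, a + \varepsilon) \subseteq A$; letting $b' = \sup\{t : (a,t) \subseteq A\}$ be the right endpoint of this first component, I take $s(A)$ to be the midpoint of $(a, b')$ when $b' \in M$, and $s(A) = a + c_0$ when $b' = +\infty$. The remaining case $a = -\infty$ is treated symmetrically using $b$ and $\sup$ (a greatest element if it exists, the midpoint of the last component, or $0$ when $A = M$). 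Each clause is a first-order condition on $x$, so $\varphi$ is definable. A key enabling step is the existence of midpoints: I first observe that a definably complete ordered group is divisible, since for $d > 0$ the element $\sup\{x : x + x \leq d\}$ exists by definable completeness and equals $d/2$ (the inequalities $2c < d$ and $2c > d$ are both excluded using continuity of addition and the fact that doubling sends arbitrarily small positive elements to arbitrarily small positive elements). Halving then produces the midpoint $a + (b'-a)/2$ used above.

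The main obstacle is precisely the case of a non-attained extremum, that is, when the relevant infimum or supremum of $A$ does not belong to $A$ and the boundary component is a genuine open interval. Here one cannot simply name the extremum, and in a bare ordered group there is no scale-invariant way to name an interior point of $(a,b')$ using $c_0$ alone, since a fixed positive step may overshoot a short interval. This is exactly where the two hypotheses combine: local o-minimality guarantees that near the non-attained extremum $A$ is, locally, an honest interval (rather than some wild definable set with empty interior), so that the first component is well defined and definable; and definable completeness supplies both the extremum as an $\inf$ or $\sup$ and, through divisibility, the midpoint lying strictly inside that component. I expect verifying the divisibility sub-lemma and checking uniform definability across all cases (bounded, unbounded, attained, non-attained) to be the most delicate bookkeeping.

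Finally, for the transversal assertion I reduce to a selection that depends only on the set. Extending $s$ lexicographically, I define for each nonempty definable $B \subseteq M^m$ a point $s^{(m)}(B) \in B$ by choosing its first coordinate $c_1 = s(\operatorname{pr}_1(B))$ and then recursing on the fiber $\{y \in M^{m-1} : (c_1,y) \in B\}$; since $s$ and the fixed parameters $c_0, 0$ are the same for every set, $s^{(m)}(B)$ depends on $B$ alone. Given a definable equivalence relation $E$ on $X$ with classes $[x] = \{y \in X : (x,y) \in E\}$, I set $\varphi(x) = s^{(m)}([x])$ and $S = \varphi(X)$. Because $x \sim_E y$ implies $[x] = [y]$ and hence $\varphi(x) = \varphi(y)$, while $\varphi(x) \in [x]$, a short computation shows that $S$ meets each class in the single point $\varphi(x)$: if $z = \varphi(w) \in S$ lies in $[x]$, then $w \sim_E z \sim_E x$, so $[w] = [x]$ and $z = \varphi(w) = \varphi(x)$.
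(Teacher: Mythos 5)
Your overall strategy (induct down to one fiber coordinate, then select a point of a nonempty definable subset of $M$ canonically using $\inf$, $\sup$, local o-minimality and divisibility) is the natural one; the paper itself gives no in-text argument and simply cites \cite[Lemma 2.8]{Fuji7}, and your route is essentially the standard proof. However, your one-variable case analysis has a genuine hole: you define $s(A)$ only when (i) $\inf A \in M$, (ii) $\inf A = -\infty$ and $\sup A \in M$, or (iii) $A = M$. A nonempty definable set with $\inf A = -\infty$ and $\sup A = +\infty$ need not equal $M$, and for such a set your recipe assigns no value (and the variant reading ``take $0$ when $A$ is unbounded in both directions'' fails too, since $0$ need not lie in $A$). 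This is not a vacuous case: take $Y = \{(x,t) \in M^2 \;|\; t \neq x\}$ and $X = M$ with $\pi$ the projection onto the first coordinate. Every fiber is $M \setminus \{x\}$, which is unbounded in both directions and different from $M$, so the ``section'' you construct is defined at no point of $X$ at all.

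The repair is cheap and uses no new ideas, which is why I would call this a bookkeeping gap rather than a wrong approach: when $\sup A = +\infty$, the set $B = A \cap [0,\infty)$ is nonempty, definable, and satisfies $\inf B \in M$, so your case (i) applies verbatim to $B$; when $\sup A \in M$, your existing cases already cover $A$. Since this modified selection still depends only on the set $A$ and the fixed parameters $0$ and $c_0$, canonicity is retained, so your lexicographic extension $s^{(m)}$ and the transversal argument for the equivalence-relation part go through unchanged; that part of your write-up is correct. One further small point: your divisibility sub-lemma is indeed a known consequence of definable completeness (it goes back to \cite{M}), but your computation $(c+e)+(c+e) = (c+c)+(e+e)$ silently uses commutativity of $+$, which in this setting should either be quoted together with divisibility as part of that known fact or proved separately.
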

\begin{proof}
	See \cite[Lemma 2.8]{Fuji7}.
\end{proof}

We demonstrate several assertions for definable complete expansions of an ordered group.
\begin{definition}	
	Consider an expansion of a dense linear order without endpoints $\mathcal M=(M;<,\ldots)$.
	Let $X$ and $T$ be definable sets.
	The parameterized family $\{S_t\;|\;t \in T\}$ of definable subsets of $X$ is called \textit{definable} if the union $\bigcup_{t \in T} \{t\} \times S_t$ is definable.

	For a set $X$, a family $\mathcal F$ of subsets of $X$ is called a \textit{filtered collection} if, for any $B_1, B_2 \in \mathcal F$, there exists $B_3 \in \mathcal F$ with $B_3 \subseteq B_1 \cap B_2$. 
	A parameterized family $\{S_t\;|\;t \in T\}$ of definable subsets of a definable set $X$ is a \textit{definable filtered collection} if it is simultaneously definable and a filtered collection.
\end{definition}

\begin{lemma}\label{lem:prepre}
	Consider a definably complete expansion of an ordered group $\mathcal M=(M,<,+,0,\ldots)$.
	Let $C$ be a definable, closed and bounded subset of $M^m$.
	Let $\varphi, \psi: C \rightarrow M^{>0}$ be two definable functions.
	Assume that the following condition is satisfied:
	$$
	\forall x \in C, \exists \delta>0, \forall x' \in C,\ |x'-x| < \delta \Rightarrow \varphi(x') \geq \psi(x) \text{.}
	$$
	Then the inequality $\inf \varphi(C)>0$ holds true.
\end{lemma}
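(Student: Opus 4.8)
The plan is to argue by contradiction. Since $C\neq\emptyset$ (otherwise $\inf\varphi(C)=+\infty$ and there is nothing to prove) and $\varphi(C)\subseteq M^{>0}$, definable completeness gives $\inf\varphi(C)\in M$ with $\inf\varphi(C)\geq 0$; so if the conclusion fails we may assume $\inf\varphi(C)=0$. Under this assumption I would try to manufacture a single point $x_0\in C$ at which the displayed hypothesis is violated, namely a point such that $\varphi$ takes values arbitrarily close to $0$ in every neighbourhood of $x_0$. Indeed, if such an $x_0$ exists then $\inf\{\varphi(x')\mid x'\in C,\ |x'-x_0|<\delta\}=0$ for all $\delta>0$, whereas the hypothesis applied at $x_0$ furnishes a $\delta>0$ with $\varphi(x')\geq\psi(x_0)>0$ whenever $x'\in C$ and $|x'-x_0|<\delta$, a contradiction.

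To produce $x_0$ I would use the sublevel sets. For $t\in M^{>0}$ put $C_t:=\{x\in C\mid\varphi(x)<t\}$; these are definable, nonempty (because $\inf\varphi(C)=0$), contained in the bounded set $C$, and decreasing in $t$. Their closures $\mycl(C_t)$ are again definable (the closure of a definable set is definable) and form a definable filtered collection of nonempty closed bounded sets, since $C_{\min(t_1,t_2)}\subseteq C_{t_1}\cap C_{t_2}$. The crux of the proof is that this family has nonempty intersection; granting this, choose $x_0\in\bigcap_{t>0}\mycl(C_t)$. Then $x_0\in\mycl(C)=C$, and for every $\delta>0$ and every $t>0$ the open ball $B(x_0,\delta)=\{x\mid|x-x_0|<\delta\}$ meets $C_t$, which says precisely that $\varphi$ attains values below every $t>0$ arbitrarily near $x_0$; this is the bad point required above.

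The main obstacle is therefore the intersection statement: in a definably complete expansion of an ordered group, a definable filtered collection of nonempty closed and bounded subsets of $M^m$ has nonempty intersection. I cannot take the full theory of definable compactness for granted here, since it is developed only later and local o-minimality is not assumed in this lemma; so I would establish the intersection property directly from definable completeness by induction on $m$. For $m=1$ I would set $a:=\sup_{t>0}\inf\mycl(C_t)$ (the infimum of a closed bounded subset of $M$ is attained, hence lies in the set) and check, using closedness, that $a$ belongs to every member of the family. The inductive step requires the auxiliary fact that the projection of a closed bounded definable set is closed, which I would prove by the same kind of $\sup$-of-$\inf$ argument applied to the dropped coordinate; with it in hand one passes to the projections, finds a common point $y_0$ by induction, and then finds a point of the common fibre over $y_0$ by the one-dimensional case. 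I expect this intersection property, rather than the final deduction, to be the real content of the lemma.
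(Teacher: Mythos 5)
Your proposal is correct and is essentially the paper's own argument: assuming $\inf\varphi(C)=0$, both proofs form the definable filtered collection of closures of sublevel sets of $\varphi$ (the paper uses $\mycl(\varphi^{-1}([0,\varepsilon]))$ rather than your $\mycl(\{x\in C\mid \varphi(x)<t\})$), take a point in the common intersection, and contradict the displayed hypothesis at that point by choosing $0<\varepsilon<\psi(x)$. The only divergence is that the paper simply cites \cite[Remark 5.6]{FKK} for the fact that a definable filtered collection of nonempty closed bounded definable sets has nonempty intersection, whereas you supply a self-contained inductive proof of it from definable completeness; your sketch (dimension one via suprema of attained infima, closedness of coordinate projections of closed bounded definable sets, then projections plus fibres) is sound, but this is exactly the black box the paper invokes.
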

\begin{proof}
	Assume for contradiction that $\inf\varphi(C)=0$.
	For every $\varepsilon > 0$, let $D_{\varepsilon}$ be the closure of $\varphi^{-1}([0,\varepsilon])$ .
	The family $(D_{\varepsilon})_{\varepsilon>0}$ is a definable filtered collection of nonempty closed subsets of $C$.
	Therefore, there exists a point $x$ in their intersection by \cite[Remark 5.6]{FKK}. 
	Let $0 < \varepsilon < \psi(x)$ and choose $\delta>0$ so that the condition in the lemma is satisfied. 
	Pick $x_0 \in \varphi^{-1}([0,\varepsilon])$ such that $|x_0-x|<\delta$.
	We have $\varepsilon \geq \varphi(x_0) \geq \psi(x) > \varepsilon$.
	It is a contradiction.
\end{proof}

\begin{definition}
	Consider an expansion of a densely linearly ordered abelian group $\mathcal M=(M,<,+,0,\ldots)$.
	Let $C$ and $P$ be definable sets.
	Let $f: C \times P \rightarrow M$ be a definable function.
	The function $f$ is \textit{equi-continuous} with respect to $P$ if the following condition is satisfied:
	$$
	\forall \varepsilon>0, \ \forall x \in C, \ \exists \delta >0, \ \forall p \in P, \ \forall x' \in C,\ \  |x-x'|< \delta \Rightarrow |f(x,p)-f(x',p)|< \varepsilon\text{.}
	$$
	The function $f$ is \textit{uniformly equi-continuous} with respect to $P$ if the following condition is satisfied:
	$$
	\forall \varepsilon>0, \ \exists \delta >0, \ \forall p \in P, \ \forall x, x' \in C,\ \  |x-x'|< \delta \Rightarrow |f(x,p)-f(x',p)|< \varepsilon\text{.}
	$$
%
\end{definition}

\begin{proposition}\label{prop:equi-cont}
	Consider a definably complete expansion of an ordered group $\mathcal M=(M,<,+,0,\ldots)$.
	Let $C$ and $P$ be definable sets.
	Let $f: C \times P \rightarrow M$ be a definable function.
	Assume that $C$ is closed and bounded.
	Then $f$ is equi-continuous with respect to $P$ if and only if it is uniformly equi-continuous with respect to $P$.
\end{proposition}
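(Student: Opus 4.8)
The plan is to prove the nontrivial direction: uniform equi-continuity follows from equi-continuity (the converse is immediate, since the uniform statement with its single $\delta$ trivially implies the pointwise one). So fix $\varepsilon > 0$ and seek a single $\delta > 0$ that works for every $x, x' \in C$ and every $p \in P$ simultaneously. The natural idea is to package the pointwise data into a function on $C$ and then apply Lemma \ref{lem:prepre} to extract a uniform positive lower bound.

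First I would, for each $x \in C$, consider the set of admissible radii. Equi-continuity with respect to $P$ gives, for the fixed $\varepsilon$ and each $x$, some $\delta > 0$ so that $|x - x'| < \delta$ forces $|f(x,p) - f(x',p)| < \varepsilon$ for all $p \in P$ and all $x' \in C$. To turn this pointwise choice into a definable function I would define $\varphi(x)$ to be (roughly) the supremum of all such admissible $\delta$, capped at $1$ to keep it finite and bounded; concretely,
\[
\varphi(x) := \sup\Bigl(\{1\} \cup \{\, \delta \in M^{>0} \;:\; \forall x' \in C,\, \forall p \in P,\ |x - x'| < \delta \Rightarrow |f(x,p) - f(x',p)| < \varepsilon \,\}\Bigr),
\]
which is definable because $f$, $C$ and $P$ are definable, and is positive everywhere by the equi-continuity hypothesis. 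By definable completeness this supremum exists in $M$, and it is bounded above by $1$.

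The key step is to verify that $\varphi$ (together with a suitable $\psi$) satisfies the hypothesis of Lemma \ref{lem:prepre}. I would take $\psi = \tfrac12 \varphi$, or more carefully any definable positive function dominated by $\varphi$ locally. The hypothesis to check is: for each $x \in C$ there is $\eta > 0$ such that $\varphi(x') \geq \psi(x)$ whenever $|x' - x| < \eta$. This is where the triangle-inequality bookkeeping enters. If $x'$ is close to $x$, say within $\eta := \tfrac12 \varphi(x)$, then any radius $r \leq \tfrac12\varphi(x)$ that is admissible "from $x$" should also be admissible "from $x'$", because a ball of radius $r$ around $x'$ is contained in the ball of radius $\varphi(x)$ around $x$; hence $\varphi(x') \geq \tfrac12 \varphi(x) = \psi(x)$. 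This gives the local lower bound required by the lemma.

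With the hypothesis of Lemma \ref{lem:prepre} established, the lemma yields $\inf \varphi(C) > 0$; call this value $\delta_0$ and set $\delta := \delta_0$ (or $\delta := \delta_0 / 2$ to be safe). Then for every $x \in C$ we have $\varphi(x) \geq \delta_0 > \delta/2$, so any displacement of size less than $\delta/2$ is admissible from every point, giving the uniform conclusion $|f(x,p) - f(x',p)| < \varepsilon$ for all $p \in P$ and all $x, x'$ with $|x - x'| < \delta/2$. The main obstacle I anticipate is making the definition of $\varphi$ genuinely definable and making the triangle-inequality argument robust at the boundary of admissible radii: the supremum in the definition of $\varphi$ need not itself be an admissible radius (strict versus non-strict inequalities), so I would work with radii strictly below $\varphi(x)$ throughout and absorb the resulting slack into the factor of $\tfrac12$, which is the standard device that keeps the comparison clean.
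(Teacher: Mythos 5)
Your overall strategy (package the pointwise moduli into a definable function and feed it to Lemma \ref{lem:prepre}) is the same as the paper's, but there is a genuine gap at the key step, and it is exactly the point where the paper's proof has to work harder. Writing $\varphi_\varepsilon$ for your $\varphi$ to make the tolerance explicit, you apply Lemma \ref{lem:prepre} to the pair $(\varphi_\varepsilon, \tfrac{1}{2}\varphi_\varepsilon)$ at a \emph{single} tolerance $\varepsilon$, justifying its hypothesis by ball containment: if $|x-x'|<\tfrac{1}{2}\varphi_\varepsilon(x)$, then any ball of radius $r\le\tfrac{1}{2}\varphi_\varepsilon(x)$ about $x'$ lies in the ball of radius $\varphi_\varepsilon(x)$ about $x$, hence $r$ is admissible from $x'$. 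This inference is false: admissibility from $x$ controls $|f(x,p)-f(y,p)|$, i.e.\ deviations from the value at the \emph{center} $x$, not the oscillation of $f(\cdot,p)$ over the ball. For $x'$ and $y$ both within $\varphi_\varepsilon(x)$ of $x$, the triangle inequality only gives $|f(x',p)-f(y,p)|\le|f(x',p)-f(x,p)|+|f(x,p)-f(y,p)|<2\varepsilon$, so what you actually prove is $\varphi_{2\varepsilon}(x')\ge\tfrac{1}{2}\varphi_\varepsilon(x)$. The inequality you need can genuinely fail in the stated generality. For instance, work in $(\mathbb{R},<,+,\cdot,0,\mathbb{Z})$, which is a definably complete expansion of an ordered group; take $C=[-1,1]$, $P=\{n\in\mathbb{Z}\;|\;n\ge 2\}$, $\varepsilon=1$, cap $c=1$, and $f(x,n)=g_n(x)$ piecewise linear with $g_n\equiv 0$ on $[-1,0]$, $g_n(x)=x$ on $[0,1/n]$, slope $-5/2$ on $[1/n,\,1/n+2/5]$, and $g_n\equiv 1/n-1$ beyond. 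All slopes are bounded by $5/2$, so the family is equi-continuous (indeed uniformly so, consistently with the Proposition), and all values lie in $(-1,1)$, so every radius is admissible at $0$ and $\varphi_1(0)=1$; yet $|g_n(1/n+2/5)-g_n(1/n)|=1$, so $\varphi_1(1/n)\le 2/5<\tfrac{1}{2}\varphi_1(0)$ for every $n$, and since $1/n\to 0$ the hypothesis of Lemma \ref{lem:prepre} fails at $x=0$ for your pair.

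The repair is precisely the paper's device, and it explains why Lemma \ref{lem:prepre} is stated for two \emph{different} functions: take $\psi(x)=\tfrac{1}{2}\varphi_{\varepsilon/2}(x)$, half the modulus at \emph{half} the tolerance. If $|x-x'|<\tfrac{1}{2}\varphi_{\varepsilon/2}(x)$ and $|x'-y|<\tfrac{1}{2}\varphi_{\varepsilon/2}(x)$, then both $x'$ and $y$ lie within $\varphi_{\varepsilon/2}(x)$ of $x$, so $|f(x',p)-f(x,p)|<\varepsilon/2$ and $|f(y,p)-f(x,p)|<\varepsilon/2$ for all $p$, whence $|f(x',p)-f(y,p)|<\varepsilon$; this yields $\varphi_\varepsilon(x')\ge\tfrac{1}{2}\varphi_{\varepsilon/2}(x)$, which is exactly what Lemma \ref{lem:prepre} requires, and your concluding extraction of a uniform $\delta$ from $\inf\varphi_\varepsilon(C)>0$ then goes through as written. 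Two smaller slips: $\sup(\{1\}\cup\{\ldots\})$ does not cap the supremum at $1$ but floors it at $1$, which destroys the property that every radius below $\varphi(x)$ is admissible (restrict the admissible $\delta$ to $(0,c)$ instead, as the paper does); and a bare ordered group has no distinguished element ``$1$'', so the cap should be an arbitrary fixed positive $c$. These are cosmetic, but the two-tolerance issue is essential.
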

\begin{proof}
	A uniformly equi-continuous definable function is always equi-continuous.
	We prove the opposite implication.
	
	Take a positive $c \in M$.
	Consider the definable function $\varphi:C \times M^{>0} \rightarrow M^{>0}$ given by 
	$$ \varphi(x,\varepsilon)=\sup\{0<\delta<c\;|\;\forall p \in P, \ \forall x' \in C,\  |x-x'|< \delta \Rightarrow |f(x,p)-f(x',p)|< \varepsilon\}\text{.}$$
	Since $f$ is equi-continuous with respect to $P$, we have $\varphi(x,\varepsilon)>0$ for all $x \in C$ and $\varepsilon>0$.
	Fix arbitrary $x \in C$ and $\varepsilon>0$.
	We also fix an arbitrary point $x' \in C$ with $|x'-x|<\frac{1}{2}\varphi(x,\frac{\varepsilon}{2})$.
	We have $|f(x',p)-f(x,p)|<\frac{\varepsilon}{2}$ for all $p \in P$ by the definition of $\varphi$.
	
	For all $y \in C$ with $|x'-y|<\frac{1}{2}\varphi(x,\frac{\varepsilon}{2})$, we have $|x-y| \leq |x-x'|+|x'-y| < \varphi(x,\frac{\varepsilon}{2})$.
	We get $|f(y,p)-f(x,p)|<\frac{\varepsilon}{2}$ for all $p \in P$ by the definition of $\varphi$.
	We finally obtain $|f(y,p)-f(x',p)| \leq |f(x',p)-f(x,p)|+|f(y,p)-f(x,p)|<\varepsilon$ for all $p \in P$.
	It means that $\varphi(x',\varepsilon) \geq \frac{1}{2}\varphi(x,\frac{\varepsilon}{2})$ whenever $|x'-x|<\frac{1}{2}\varphi(x,\frac{\varepsilon}{2})$.
	Apply Lemma \ref{lem:prepre} to the definable functions $\varphi(\cdot,\varepsilon)$ and $\frac{1}{2}\varphi(\cdot,\frac{\varepsilon}{2})$ for a fixed $\varepsilon>0$.
	We have $\inf \varphi(C,\varepsilon)>0$.
	
	For any $\varepsilon>0$, set $\delta=\inf \varphi(C,\varepsilon)$.
	For any $p \in P$ and $x,x' \in C$, we have $|f(x,p)-f(x',p)|< \varepsilon$ whenever $|x-x'|< \delta$ by the definition of $\varphi$.
	It means that $f$ is uniformly equi-continuous.
\end{proof}

It is well known that a continuous function defined on a compact set is uniformly continuous.
The following corollary claims that a similar assertion holds true for a definable function defined on a definable, closed bounded set.
\begin{corollary}\label{cor:uniform}
	Consider a definably complete expansion of an ordered group $\mathcal M=(M,<,+,0,\ldots)$.
	Let $C$ be a definable, closed and bounded set.
	A definable continuous function $f: C \rightarrow M$ is uniformly continuous.
\end{corollary}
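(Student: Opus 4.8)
The plan is to deduce this corollary directly from Proposition \ref{prop:equi-cont} by viewing $f$ as a family of functions parameterized by a trivial parameter space. First I would fix a one-point definable set, say $P=\{0\}$ (which is definable since $0 \in M$), and define a definable function $g\colon C \times P \rightarrow M$ by $g(x,0)=f(x)$. This is legitimate because $f$ is definable, and the hypotheses of the proposition are met: $C$ is definable, closed and bounded, and $P$ is a definable set.

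The key step is to observe that, because the parameter $p$ ranges over the single element $0$ and $g(x,0)=f(x)$, the defining condition for equi-continuity of $g$ with respect to $P$ collapses to the statement $\forall \varepsilon>0,\ \forall x \in C,\ \exists \delta>0,\ \forall x' \in C,\ |x-x'|<\delta \Rightarrow |f(x)-f(x')|<\varepsilon$, which is precisely the continuity of $f$ at every point of $C$. Since $f$ is assumed continuous, $g$ is equi-continuous with respect to $P$. I would then apply Proposition \ref{prop:equi-cont} to conclude that $g$ is \emph{uniformly} equi-continuous with respect to $P$. Unwinding this definition with the trivial parameter once more yields $\forall \varepsilon>0,\ \exists \delta>0,\ \forall x,x' \in C,\ |x-x'|<\delta \Rightarrow |f(x)-f(x')|<\varepsilon$, which is exactly the uniform continuity of $f$.

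I expect no serious obstacle here: all the substantive work has already been carried out in Proposition \ref{prop:equi-cont}, and this corollary is essentially a repackaging of it. The only point requiring any care is the correct bookkeeping in identifying the singleton-parameter family $g$ with the original function $f$, so that equi-continuity of $g$ matches continuity of $f$ and uniform equi-continuity of $g$ matches uniform continuity of $f$. Once that translation is in place, the argument is immediate.
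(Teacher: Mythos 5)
Your proposal is correct and is exactly the paper's own argument: the paper likewise takes $P$ to be a singleton, defines $g(x,p)=f(x)$, and applies Proposition \ref{prop:equi-cont}, with equi-continuity collapsing to continuity and uniform equi-continuity to uniform continuity. Your write-up merely spells out the bookkeeping that the paper leaves implicit.
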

\begin{proof}
	Let $P$ be a singleton.
	Apply Proposition \ref{prop:equi-cont} to the function $g:C \times P \rightarrow M$ defined by $g(x,p)=f(x)$.
\end{proof}

\section{Definable directed sets}\label{sec:directed}
Our target in this section is to demonstrate \cite[Theorem 8]{GTW} in the definably complete locally o-minimal setting.
We first recall the definition of downward directed sets.

\begin{definition}\label{def:directed}
	Recall that a preorder is a transitive and reflexive binary relation.
	Let $\Omega$ be a set and $\preceq$ be a preorder on it.
	For any subset $\Omega'$, the notation $\preceq_{\Omega'}$ is a preorder on $\Omega'$ such that $u \preceq_{\Omega'} v$ if and only if $u \preceq v$ for all $u,v \in \Omega'$.
	The preorder $\preceq_{\Omega'}$ is called the \textit{restriction of $\preceq$ to $\Omega'$}.
	We simply denote $\preceq_{\Omega'}$ by $\preceq$ when this abuse of notations do not confuse readers.
	
	Let $(\Omega',\preceq')$ and $(\Omega,\preceq)$ be preordered sets.
	Given $S \subseteq \Omega$, a map $\gamma:\Omega' \to \Omega$ is \textit{downward cofinal for $S$} if, for every $u \in S$, there exists $v \in \Omega'$ such that $w \preceq' v$ implies $\gamma(w) \preceq u$.
	We say that $\gamma$ is \textit{downward cofinal} if it is downward cofinal for $\Omega$.
	
	A \textit{downward directed} set $(\Omega,\preceq)$ is a nonempty set with a preorder $\preceq$ such that, for any finite subset $\Omega'$ of $\Omega$, there exists $v \in \Omega$ satisfying the relation $v \preceq u$ for all $u \in \Omega'$.
	The dual notion of upward directed set is defined analogously.
	We mainly consider a downward directed set in this paper.
	We simply call it a directed set.
	
	Let $\mathcal M=(M,<,\ldots)$ be an expansion of a dense linear order without endpoints.
	A \textit{definable preordered set} is a definable set $\Omega \subseteq M^n$ together with a definable preorder $\preceq$ on $\Omega$.
	A \textit{definable direct set} is a definable preordered set which is directed.
%
\end{definition}

We give several examples of directed sets.
\begin{lemma}\label{lem:obv1}
	Consider a set $\Omega$ together with a linear order $\preceq$ on $\Omega$.
	The pair $(\Omega, \preceq)$ is a directed set.
\end{lemma}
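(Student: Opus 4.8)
The plan is to verify directly that $(\Omega,\preceq)$ satisfies the two defining properties of a (downward) directed set given in Definition \ref{def:directed}. First, since a linear order is by definition reflexive and transitive, the relation $\preceq$ is in particular a preorder, so $(\Omega,\preceq)$ is a preordered set. It remains to check the directedness condition: for every finite subset $\Omega'$ of $\Omega$, there exists $v \in \Omega$ with $v \preceq u$ for all $u \in \Omega'$. The nonemptiness of $\Omega$ required by the definition may be assumed, since a directed set is nonempty by convention.

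The heart of the argument is the observation that any nonempty finite subset $\Omega'$ of a linearly ordered set possesses a least element. I would establish this by induction on the cardinality $|\Omega'|$. The base case $|\Omega'|=1$ is immediate by reflexivity. For the inductive step, write $\Omega' = \Omega'' \cup \{u_0\}$ with $u_0 \notin \Omega''$ and $|\Omega''| < |\Omega'|$; by the inductive hypothesis $\Omega''$ has a least element $m$, and by totality of $\preceq$ either $m \preceq u_0$ or $u_0 \preceq m$. In the former case $m$ is least in $\Omega'$, and in the latter case $u_0$ is least in $\Omega'$ by transitivity. Setting $v$ to be this least element, we have $v \in \Omega' \subseteq \Omega$ and $v \preceq u$ for every $u \in \Omega'$, as required.

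Finally, the case of the empty subset $\Omega' = \emptyset$ is handled separately: the condition ``$v \preceq u$ for all $u \in \Omega'$'' is then vacuously true, so any element of $\Omega$ serves as the witness $v$, which exists precisely because $\Omega$ is nonempty. I do not anticipate any genuine obstacle here; the only point requiring attention is the elementary induction producing a minimum of a finite totally ordered set, together with the separate treatment of the empty subset via the nonemptiness of $\Omega$.
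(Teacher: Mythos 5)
Your proof is correct and follows essentially the same route as the paper: the paper's one-line argument is that $\min_{u \in \Omega'} u \preceq u$ for all $u$ in a finite subset $\Omega'$, and your induction merely spells out why this minimum exists, together with the (vacuous) empty-subset case. No substantive difference.
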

\begin{proof}
	In fact, for any finite subset $\Omega'$ of $\Omega$, we have $\min_{u \in \Omega'} u \preceq u$ for all $u \in \Omega'$.
\end{proof}

\begin{example}\label{ex:1}
Let $\mathcal M=(M,<,+,0,\ldots)$ be an expansion of an ordered group.
Let $M^{>0}$ be the set of positive elements in $M$.
We equip $M^{>0} \times M^{>0}$ with the definable preorder given by $$(s,t) \unlhd (s',t') \Leftrightarrow s \leq s' \text{ and } t \geq t'.$$
Note that $(M^{>0} \times M^{>0}, \unlhd)$ is a definable directed set.	
\end{example}

The following awkward linear order may seem to be tricky but it is useful.
\begin{example}\label{ex:awk}
	Let $\mathcal M=(M,<,+,0,\ldots)$ be an expansion of an ordered group.
	We introduce a linear order $\awk$ on $M^n$ called \textit{awkward linear order}.
	For any $x=(x_1,\ldots, x_n)$, we denote $(|x_1|,\ldots, |x_n|)$ by $|x|$. 
	Let $\mathfrak  S_n$ be the set of permutations of $\{1,2\ldots,n\}$.
	Since it is a finite set, we can define a linear order $\leq_{\mathfrak  S_n}$ on $\mathfrak S_n$.
	We fix it.
	For any $\sigma \in \mathfrak  S_n$ and $x=(x_1,\ldots, x_n) \in M^n$, we set $\sigma(x)=(x_{\sigma(1)}, \ldots, x_{\sigma(n)})$. 
	Define the sign map $\sgn:M \to \mathbb Z$ by $$\sgn(x)=\left\{\begin{array}{lc} 1 & \text{if } x>0,\\ 0 & \text{if }x=0,\\ -1 &\text{if }x<0. \end{array}\right.$$
	We set $\sgn(x)=(\sgn(x_1),\ldots, \sgn(x_n))$ for any $x=(x_1,\ldots, x_n) \in M^n$.
	For any $x \in M^n$, take the permutation $\sigma_x$ which is the minimum of the permutations $\sigma \in \mathfrak  S_n$ such that $|x_{\sigma(1)}| \geq |x_{\sigma(2)}| \geq \cdots \geq |x_{\sigma(n)}|$ with respect to the order $\leq_{\mathfrak  S_n}$.
	
	We are now ready to define the awkward linear order $\awk$.
	Let $x$ and $y$ be elements in $M^n$.
	We define $x \awk y$ if and only if one of the following conditions is satisfied:
	\begin{itemize}
		\item The element $|\sigma_x(x)|$ is smaller than $|\sigma_y(y)|$ with respect to the lexicographic order induced from the given linear order $<$;
		\item We have the equality $|\sigma_x(x)|=|\sigma_y(y)|$ and $\sigma_x$ is smaller than $\sigma_y$ with respect to the order $\leq_{\mathfrak S_n}$;
		\item We have the equalities $|\sigma_x(x)|=|\sigma_y(y)|$ and $\sigma_x = \sigma_y$. Furthermore, $\sgn(\sigma_x(x))$ is not greater than $\sgn(\sigma_y(y))$ with respect to the  lexicographic order on $\mathbb Z^n$.
	\end{itemize}
	It is obvious that the binary relation $\awk$ is a definable linear order.
	For any definable subset $X$ of $M^n$, the restriction of $\awk$ to $X$ is called the \textit{awkward linear order} on $X$ and also denoted by $\awk$.
\end{example}

We give several lemmas.
We do not repeat the proof when the proof in \cite{GTW} for o-minimal structures is also applicable to the definably complete locally o-minimal case.
\begin{lemma}\label{lem:decom_cont}
	Let $\mathcal M=(M,<,\ldots)$ be a definably complete locally o-minimal structure.
	Let $X$ be a definable set and $f:X \to M^n$ be a definable map.
	The definable set $X$ is partitioned into finitely many disjoint definable subsets $X_1, \ldots X_l$ such that the restriction of $f$ to $X_i$ is continuous for each $1 \leq i \leq l$.
\end{lemma}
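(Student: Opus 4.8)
The plan is to peel off the continuity locus and induct on the dimension of the source, after first reducing the target to $M$. Write $f=(f_1,\dots,f_n)$. Since the topology on $M^n$ is the product topology, a restriction $f|_A$ is continuous exactly when each component $f_i|_A$ is continuous. Thus, if for every $i$ I obtain a finite partition of $X$ into definable sets on which $f_i$ is continuous, the common refinement of these $n$ finite partitions — again a finite partition of $X$ into definable sets — makes every component, hence $f$ itself, continuous on each piece. Henceforth I assume $n=1$ and consider a definable function $f\colon X\to M$.

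I would then argue by induction on $d=\dim X$, using the dimension theory available for definably complete locally o-minimal structures (\cite{Fuji,KTTT}). When $d=0$ the set $X$ is discrete — local o-minimality prevents a definable zero-dimensional set from accumulating on itself — so every point of $X$ is isolated in the subspace topology and $f$ is trivially continuous; one piece suffices. For $d\ge 1$, let $C$ be the set of points at which $f\colon X\to M$ is continuous for the subspace topology on $X$; this set is definable, continuity being expressible by a first-order formula in the order alone. Because continuity of $f$ at a point of a subset descends to the restriction, $f|_C$ is continuous. Set $D=X\setminus C$. The decisive input is the strict inequality $\dim D<\dim X$: granting it, the induction hypothesis applies to $f|_D\colon D\to M$ and partitions $D$ into finitely many definable sets on which $f$ is continuous, and adjoining $C$ gives the required finite partition of $X$.

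The main obstacle is exactly that inequality $\dim D<\dim X$, i.e. the \emph{generic continuity} of definable functions. In the o-minimal setting this falls out of cell decomposition, which is unavailable here; instead I would derive it from the monotonicity and local decomposition theory of definably complete locally o-minimal structures (\cite{Fuji,KTTT}). The idea is to reduce, by fixing all but one source coordinate, to functions of a single variable, where local o-minimality forces the discontinuities along each fibre to form a discrete set; feeding this fibrewise control into the definable choice lemma (Lemma \ref{lem:definable_choice}) together with the fibre-dimension formula for $\dim$ then keeps the discontinuity locus off a full-dimensional part of $X$. This fibered structure theory — not the elementary material of Section \ref{sec:preliminary} — is where local o-minimality and definable completeness are genuinely used.
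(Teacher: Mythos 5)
Your overall strategy --- reduce to a scalar target, induct on $\dim X$, split off the definable continuity locus $C$, and apply the induction hypothesis to $D=X\setminus C$ once one knows $\dim D<\dim X$ --- is exactly the skeleton of the paper's proof: the paper proves the lemma by induction on $\dim X$, importing precisely that generic-continuity inequality from \cite[Proposition 2.7(2)]{FKK}. Up to that point your argument is sound, including the reduction to $n=1$ and the $\dim X=0$ base case (a zero-dimensional definable set is indeed discrete in this setting).

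The genuine gap is in your proposed derivation of the decisive inequality $\dim D<\dim X$. Fibrewise control --- fixing all but one source coordinate and noting that the one-variable restrictions have discrete discontinuity sets --- does not control the set of points where $f$ fails to be \emph{jointly} continuous. A function can be continuous along every axis-parallel line through a point and still be discontinuous there: over $\mathbb{R}$ the standard example is $f(x,y)=xy/(x^2+y^2)$, $f(0,0)=0$, whose fibrewise discontinuity sets are all empty while the joint discontinuity locus is nonempty. So the joint discontinuity locus is not contained in the union of the fibrewise ones, and no application of Lemma \ref{lem:definable_choice} or of the fibre-dimension formula can bridge that; passing from separate to joint generic continuity is where the real work lies (in the o-minimal case it is done via cell decomposition, which you correctly note is unavailable here). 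In the present setting this step is a theorem of the literature rather than an easy consequence of one-variable local o-minimality --- it is exactly \cite[Proposition 2.7(2)]{FKK}, and the sources you point to (\cite{Fuji,KTTT}) do not cover definably complete locally o-minimal structures in the required generality. Replacing your sketched derivation by that citation makes your proof complete and essentially identical to the paper's.
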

\begin{proof}
	It is easy to prove the lemma using \cite[Proposition 2.7(2)]{FKK} by induction on $\dim X$.
	We omit the details.
\end{proof}

\begin{definition}
	A family of sets $\mathcal S$ has the \textit{finite intersection property} if, for any finite subfamily $\mathcal T$ of $\mathcal S$, we have $\bigcap_{S \in \mathcal T} S \neq \emptyset$.
\end{definition}

Let $(M,+,0,<)$ be an ordered group.
For any $u=(u_1,\ldots,u_n ) \in M^n$, the norm $\max\{|u_i|\;|\;1 \leq i \leq n\}$ is denoted by $\|u\|$.
For any $u \in M^n$ and $\varepsilon >0$, we set $\mathcal B_n(u,\varepsilon):=\{x \in M^n\;|\; \|x-u\|<\varepsilon\}$.
It is an open box.

The following and next lemmas are ley lemmas in this section.
It is a counterpart of \cite[Lemma 10]{GTW}.
\begin{lemma}\label{lem:key1}
	Let $\mathcal M=(M,<,+,0,\ldots)$ be a definably complete locally o-minimal expansion of an ordered group.
	Let $\mathcal S=\{S_u\;|\; u \in \Omega\}$ be a definable family of subsets of $M^n$ with the finite intersection property.
	There exist definable maps $f:\Omega \to M^n$, $\delta:\Omega \to M^{>0}$ and $\eta:\Omega \to M^{>0}$ satisfying the following conditions:
	\begin{enumerate}
		\item[(1)] $f(u) \in S_u$;
		\item[(2)] For every $u,v \in \Omega$, the inequalities $\frac{1}{2}\delta(u)<\delta(v)$, $\frac{1}{2}\eta(u)<\eta(v)$ and $\|f(u)-f(v)\|< \min\{\frac{1}{2}\delta(u),\frac{1}{2}\eta(u)\}$ imply $f(u) \in S_v$.
	\end{enumerate}
\end{lemma}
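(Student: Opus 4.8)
The plan is to construct the selection $f$ first and then manufacture the two moduli $\delta$ and $\eta$ as suitable suprema, so that all three maps are definable by definable completeness. For the selection I would apply the definable choice lemma (Lemma~\ref{lem:definable_choice}) to the definable set $\bigcup_{u\in\Omega}\{u\}\times S_u$, obtaining a definable map $f:\Omega\to M^n$ with $f(u)\in S_u$; where canonicity is needed to keep the later suprema definable, I would instead select the $\awk$-least admissible point using the awkward linear order of Example~\ref{ex:awk}. Condition~(1) is then immediate, and the entire difficulty is transferred to the definition of $\delta,\eta$ and the verification of~(2).

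Next I would define the two moduli as robustness radii. Fixing a positive cap $c\in M^{>0}$ and invoking definable completeness to guarantee that the relevant suprema exist and yield definable maps, I would take each of $\delta(u)$ and $\eta(u)$ to be the largest radius $r\le c$ for which a coherence condition holds: informally, every index $v$ whose selected point $f(v)$ lies within $r$ of $f(u)$ and whose own scales are comparable to those of $u$ already satisfies $f(u)\in S_v$. The factor $\tfrac12$ in the hypotheses of~(2) is precisely the slack that makes this self-referential definition usable: the comparisons $\tfrac12\delta(u)<\delta(v)$ and $\tfrac12\eta(u)<\eta(v)$ restrict attention to indices of comparable scale, while $\|f(u)-f(v)\|<\min\{\tfrac12\delta(u),\tfrac12\eta(u)\}$ forces $f(v)$ into the robustness ball around $f(u)$. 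Granting positivity of the moduli, property~(2) should then follow by unwinding these definitions together with the triangle inequality.

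The positivity $\delta(u),\eta(u)>0$ is the heart of the matter, and is where I expect the main obstacle to lie. Here I would exploit local o-minimality to show that membership is robust at small spatial scale: after slicing to one variable, each definable set meets a small box around a point in at most one point and two intervals, so $f(u)$ either lies in the interior of the relevant $S_v$ or is bounded away from it, pushing the ``bad'' indices out to a definite distance. The finite intersection property enters exactly to guarantee that the finitely many constraints surviving at a fixed scale are simultaneously satisfiable, so that an admissible radius exists; a filtered-collection argument via definable completeness, as in \cite[Remark 5.6]{FKK} and in the proof of Lemma~\ref{lem:prepre}, then prevents the supremum from collapsing to $0$. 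The genuine obstacle is the circular dependence between $f$ and the scales $\delta,\eta$, since the condition defining the radii refers to comparability of the radii themselves. I would resolve this either by a simultaneous definition that absorbs the built-in factor $\tfrac12$ of slack, or by anchoring one modulus to a selection-independent local o-minimality simplicity radius and only then defining the other relative to it, thereby breaking the loop while keeping every map definable.
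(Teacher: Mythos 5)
Your overall strategy---fix the selection $f$ by definable choice first, then manufacture $\delta$ and $\eta$ afterwards---cannot work, and the failure is more basic than the circularity you flag at the end. For an arbitrary (even canonical) selection $f(u)\in S_u$ there may exist \emph{no} positive moduli at all. Take $n=1$, $\Omega=(0,1)$ and $S_u=\{u\}\cup(1,2)$. This family is definable and has the finite intersection property, since every finite intersection contains $(1,2)$. Your preferred canonical choice, the $\awk$-least point of $S_u$ (Example~\ref{ex:awk}), is exactly $f(u)=u$, because $u<1<|x|$ for every $x\in(1,2)$. Then $f(u)\notin S_v$ whenever $u\neq v$, so condition (2) would force: for all $u\neq v$ with $\|f(u)-f(v)\|<\min\{\tfrac12\delta(u),\tfrac12\eta(u)\}$, either $\delta(v)\le\tfrac12\delta(u)$ or $\eta(v)\le\tfrac12\eta(u)$. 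But by Lemma~\ref{lem:decom_cont} any definable $\delta,\eta$ are continuous on some subinterval of $(0,1)$ (a definable subset of $M$ of dimension $1$ has interior), and for $u,v$ in that subinterval close enough to each other, both comparability inequalities and the distance inequality hold simultaneously---a contradiction. The same example refutes your positivity mechanism: local o-minimality controls a \emph{single} definable set near a point, but here $f(u)$ fails to lie in $S_v$ for indices $v$ whose selected points $f(v)=v$ accumulate at $f(u)=u$, so the ``bad'' indices are not pushed out to any definite distance; the uniformity in $v$ that you need is precisely what local o-minimality does not provide, and the filtered-collection argument of Lemma~\ref{lem:prepre} requires closed sets and a filtered family, neither of which you have.

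The missing idea is that $f$, $\delta$, $\eta$ must be constructed \emph{simultaneously}, after first refining the family so that all of its members share a common geometric model. The paper's proof uses the trivial refinement claim together with the finite intersection property to replace $\mathcal S$ by a family all of whose finite intersections have the same minimal dimension $m$, and then, via the decomposition into quasi-special submanifolds of \cite[Proposition 2.11]{FKK} and a dimension count, arranges that every $S_u$ is contained in a single $\pi$-quasi-special submanifold $X$ of dimension $m$. Definable choice (Lemma~\ref{lem:definable_choice}) is then applied not to $\bigcup_u\{u\}\times S_u$ but to the set of tuples $(u,x,\varepsilon_1,\varepsilon_2)$ such that $x\in S_u$ and $S_u$ \emph{coincides with $X$} on the box $\mathcal B_m(\pi(x),\varepsilon_1)\times\mathcal B_{n-m}(\pi'(x),\varepsilon_2)$; this produces $f$, $\delta$, $\eta$ at once. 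That removes the circularity: each modulus refers only to $u$ and the fixed set $X$, never to other indices, and condition (2) becomes a two-line verification---the hypotheses place $f(u)$ inside the box around $f(v)$ on which $S_v=X$, while $f(u)\in S_u\subseteq X$ automatically. In my example this refinement replaces every $S_u$ by $(1,2)$ before any choice is made, which is exactly what your proposal omits and why both of your suggested ways of breaking the loop stall.
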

\begin{proof}
	The following claim is obvious and we omit its proof.
	\medskip
	
	\textbf{Claim.} Let $\mathcal S'=\{S'_u\;|\; u \in \Omega\}$ be another definable family of subsets of $M^n$ with the finite intersection property such that $S'_u \subseteq S_u$ for all $u \in \Omega$.
	If the lemma holds true for the family $\mathcal S'$, the lemma holds true for the family $\mathcal S$.
	\medskip
	
	Using the above claim, we reduce to the case in which there exists a nonnegative integer $m$ such that $\dim(\bigcap_{u \in \mathcal F}S_u)=m$ for each finite set $\mathcal F$ of $\Omega$.
	Set $m=\min\{\dim(\bigcap_{u \in \mathcal F}S_u)\;|\;\mathcal F \subseteq \Omega: \text{finite subset} \}$.
	There exists a finite subset $\mathcal F_0$ of $\Omega$ such that $\dim(\bigcap_{u \in \mathcal F_0}S_u)=m$.
	Set $C=\bigcap_{u \in \mathcal F_0}S_u$ and $\mathcal S'=\{C \cap S_u\;|\; u \in \Omega\}$.
	It is obvious that $\mathcal S'$ enjoys the finite intersection property.
	Considering $\mathcal S'$ instead of $\mathcal S$, we may assume that $\dim(\bigcap_{u \in \mathcal F}S_u)=m$ for each finite set $\mathcal F$ of $\Omega$ by the claim.
	
	We may further assume that there exist a coordinate projection $\pi: M^n \to M^m$ and a $\pi$-quasi-special submanifold $X$ such that $S_u$ is contained in $X$ for every $u \in \Omega$.
	We justify this assumption.
	Fix a point $u_0 \in \Omega$.
	We can decompose $S_{u_0}$ into finitely many disjoint quasi-special submanifolds $C_1, \ldots, C_l$ by \cite[Proposition 2.11]{FKK}.
	We demonstrate that there exists $1 \leq q \leq l$ such that $\dim(C_q \cap \bigcap_{u \in \mathcal F}S_u)=m$ for each finite subset $\mathcal F$ of $\Omega$.
	Assume for contradiction that there does not exist such a $q$.
	For each $1 \leq i \leq l$, there exists a finite subset $\mathcal F_i$ such that $\dim(C_i \cap \bigcap_{u \in \mathcal F_i}S_u)<m$.
	We then have 
	\begin{align*}
	m &= \dim\left(S_{u_0} \cap \bigcap_{i=1}^l\bigcap_{u \in \mathcal F_i}S_u\right) =  \dim\left(\left(\bigcup_{i=1}^l C_i\right) \cap \bigcap_{i=1}^l\bigcap_{u \in \mathcal F_i}S_u\right) \\
	& \leq \dim \left(\bigcup_{i=1}^l C_i \cap \bigcap_{u \in \mathcal F_i}S_u\right)<m
	\end{align*}
	by \cite[Proposition 2.8(2)(5)]{FKK}.
	It is a contradiction.
	Let $\pi$ be the coordinate projection such that $C_q$ is a $\pi$-quasi-special submanifold.
	Set $X=C_q$.
	Consider the definable family $\mathcal S''=\{X \cap S_u\;|\; u\ \in \Omega\}$.
	It is obvious $\mathcal S''$ enjoys the finite intersection property.
	We have justified the assumption by Claim.
	
	We consider two separate cases, that is, the case in which $m=n$ and the case in which $m<n$.
	We first consider the case in which $m=n$.
	For any $u \in \Omega$, the definable set $S_u$ has a nonempty interior in this case.
	Let $\rho_1: \Omega \times X \times M^{>0} \to \Omega$, $\rho_2: \Omega \times X \times M^{>0} \to X$ and  $\rho_3: \Omega \times X \times M^{>0} \to M^{>0}$ be the projections.
	Set 
	\begin{align*}
	Z &= \{(u,x,\varepsilon) \in \Omega \times X \times M^{>0}\;|\; \mathcal B_m(x,\varepsilon) \subseteq S_u\}.
	\end{align*}
	By the assumption, we have $\rho_1^{-1}(u) \cap Z \neq \emptyset$ for any $u \in \Omega$.
	There exists a definable map $\tau: \Omega \to Z$ such that the composition $\rho_1 \circ \tau$ is the identity map on $\Omega$ by Lemma \ref{lem:definable_choice}.
	Set $f=\rho_2 \circ \tau$ and $\delta=\eta=\rho_3 \circ \tau$.
	These there maps satisfy the requirements in the lemma.
	In fact, it is obvious that $f(u) \in S_u$ for any $u \in \Omega$.
	If $\frac{1}{2}\delta(u)<\delta(v)$ and $\|f(u)-f(v)\|< \frac{1}{2}\delta(u)$, we have $\|f(u)-f(v)\|<\delta(v)$.
	Since $S_v$ contains an open box $\mathcal B_m(f(v),\delta(v))$, we have $f(u) \in S_v$.
	
	We next consider the case in which $m<n$.
	The definable set $X$ is a $\pi$-quasi-special submanifold.
	We may assume that $\pi$ is the coordinate projection onto the first $m$ coordinates by permuting the coordinates if necessary.
	We want to show that, for each $u \in \Omega$, there exists an open box $B'$  such that $S_u \cap B'=X \cap B'$ and $S_u \cap B'$ is the graph of a definable continuous map defined on $\pi(B')$.	
	By \cite[Proposition 2.7(10)]{FKK}, for each $u \in \Omega$, there exists $y \in S_u$ such that $\dim (B \cap S_u)=m$ for any open box containing the point $y$.
	By the definition of quasi-special submanifolds, if we choose a sufficiently small open box $B$ with $y \in B$, the intersection $X \cap B$ is the graph of a definable continuous function $\tau_B$ defined on $\pi(B)$.
	For any $z \in \pi(S_u \cap B)$, the set $\pi^{-1}(z) \cap (S_u \cap B)$ is a singleton.
	We have $\dim(\pi(S_u \cap B))=m$ by \cite[Proposition 2.7(1),(9)]{FKK} because $\dim(S_u \cap B)=m$.
	It means that $\pi(S_u \cap B)$ has a nonempty open interior by the definition of dimension.
	Take a point $w$ in the interior of $\pi(S_u \cap B)$ and set $x=(w,\tau_B(w))$.
	Take a sufficiently small open box $C$ in $M^m$ containing the point $w$ and contained in the interior of $\pi(S_u \cap B)$.
	We set $B'=\pi^{-1}(C) \cap B$.
	We have $X \cap B'_u=S_u \cap B'$.
	We have demonstrated that, for each $u \in \Omega$, there exists an open box $B$ such that $S_u \cap B=X \cap B$ and $S_u \cap B$ is the graph of a definable continuous map defined on $\pi(B)$.
	
	Let $\rho'_1:\Omega \times X \times M^{>0} \times M^{>0} \to \Omega$, 
	$\rho'_2:\Omega \times X \times M^{>0} \times M^{>0} \to X$, $\rho'_3:\Omega \times X \times M^{>0} \times M^{>0} \to M^{>0}$ and $\rho'_4:\Omega \times X \times M^{>0} \times M^{>0} \to M^{>0}$ be the projections.
	The projections $\rho'_3$ and $\rho'_4$ are the projections onto the second coordinate to the last and onto the last coordinate, respectively.
	Let $\pi':M^n \to M^{n-m}$ be the coordinate projection forgetting the first $m$ coordinates.
	We set $\mathcal B(x,\varepsilon_1,\varepsilon_2):=\mathcal B_m(\pi(x),\varepsilon_1) \times \mathcal B_{n-m}(\pi'(x),\varepsilon_2)$ for any $x \in M^n$, $\varepsilon_1>0$ and $\varepsilon_2>0$.
	Set 
	\begin{align*}
	Z'&=\{(u,x,\varepsilon_1,\varepsilon_2) \in \Omega \times X \times M^{>0} \times M^{>0}\;|\; x \in S_u\\
&\qquad  S_u \cap \mathcal B(x,\varepsilon_1,\varepsilon_2) = X \cap \mathcal B(x,\varepsilon_1,\varepsilon_2) \text{ and } S_u \cap \mathcal B(x,\varepsilon_1,\varepsilon_2) \text{ is the graph of }\\
	&\qquad \text{a definable continuous map defined on } \mathcal B_m(\pi(x),\varepsilon_1)\}.
	\end{align*}
	We have demonstrated that the intersection $(\rho'_1)^{-1}(u) \cap Z'$ is not empty for each $u \in \Omega$ in the previous paragraph.
	We can find a definable map $\tau':\Omega \to Z'$ such that the composition $\rho'_1 \circ \tau'$ is the identity map on $\Omega$ by Lemma \ref{lem:definable_choice}.
	Set $f=\rho'_2 \circ \tau'$, $\delta = \rho'_3 \circ \tau'$ and $\eta=\rho'_4 \circ \tau'$. 
	They are the maps we are looking for.
	It is obvious that $f(u) \in S_u$.
	In particular, we have $f(u) \in X$.
	Assume that the inequalities $\frac{1}{2}\delta(u)<\delta(v)$, $\frac{1}{2}\eta(u)<\eta(v)$ and $\|f(u)-f(v)\|< \min\{\frac{1}{2}\delta(u),\frac{1}{2}\eta(u)\}$ hold true.
	We have $\|\pi(f(u))-\pi(f(v))\|<\frac{1}{2}\delta(u)<\delta(v)$ and $\|\pi'(f(u))-\pi'(f(v))\|<\frac{1}{2}\eta(u)<\eta(v)$ by the definition of the norm.
	It means that $f(u)$ is contained in the box $\mathcal B(f(v),\delta(v),\eta(v))$.
	We get $f(u) \in X \cap \mathcal B(f(v),\delta(v),\eta(v))$.
	On the other hand, we have $S_v \cap \mathcal B(f(v),\delta(v),\eta(v))=X \cap \mathcal B(f(v),\delta(v),\eta(v))$.
	It implies that $f(u) \in S_v$.
\end{proof}

We next prove the second key lemma.
\begin{lemma}\label{lem:main}
	Let $\mathcal M=(M,<,+,0,\ldots)$ be a definably complete locally o-minimal expansion of an ordered group.
	Let $\mathcal S=\{S_u\;|\; u \in \Omega\}$ be a definable family of subsets of $M^n$ with the finite intersection property.
	There exist a definable set $\Omega' \subseteq M^N$ and a definable bijection $h:\Omega \to \Omega'$ satisfying the following conditions:
	\begin{enumerate}
		\item[(1)] For every $u \in \Omega'$, there exists $\varepsilon>0$ such that $\bigcap_{v \in \Omega', \|v-u\|<\varepsilon} S_{h^{-1}(v)} \neq \emptyset$.
		\item[(2)] For every definable subset $B$ of $\Omega'$ which is closed and bounded in $M^N$, there exists $\varepsilon>0$ such that $\bigcap_{v \in \Omega', \|v-u\|<\varepsilon} S_{h^{-1}(v)}  \neq \emptyset$ for every $u \in B$.
	\end{enumerate}
\end{lemma}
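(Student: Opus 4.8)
The plan is to feed the maps produced by Lemma \ref{lem:key1} into an explicit reparameterization of the index set $\Omega$, prove the local statement (1) by hand, and then upgrade it to the uniform statement (2) through Lemma \ref{lem:prepre}. Applying Lemma \ref{lem:key1} to $\mathcal S$ yields definable maps $f:\Omega\to M^n$ and $\delta,\eta:\Omega\to M^{>0}$ with $f(u)\in S_u$ and the stability property (2). Writing $\Omega\subseteq M^k$, I would define $h:\Omega\to M^{k+n+2}$ by $h(w)=(w,f(w),\delta(w),\eta(w))$ and set $\Omega'=h(\Omega)$, so that $N=k+n+2$. The first block of coordinates recovers $w$, so $h$ is injective; being the image of a definable set under a definable map, $\Omega'$ is definable and $h:\Omega\to\Omega'$ is a definable bijection with definable inverse. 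The reason for recording $f,\delta,\eta$ as coordinates is that, for the sup-norm $\|\cdot\|$, the inequality $\|h(w)-h(w')\|<\varepsilon$ forces each of $\|f(w)-f(w')\|$, $|\delta(w)-\delta(w')|$ and $|\eta(w)-\eta(w')|$ to be $<\varepsilon$ at once.

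For condition (1), fix $u\in\Omega'$, write $w=h^{-1}(u)$, and set $\varepsilon:=\tfrac12\min\{\delta(w),\eta(w)\}$; I claim $f(w)$ lies in the corresponding intersection. Indeed, let $v\in\Omega'$ with $\|v-u\|<\varepsilon$ and put $w'=h^{-1}(v)$. The coordinate bound above gives $\delta(w')>\delta(w)-\varepsilon\ge\tfrac12\delta(w)$, likewise $\eta(w')>\tfrac12\eta(w)$, and $\|f(w)-f(w')\|<\varepsilon=\min\{\tfrac12\delta(w),\tfrac12\eta(w)\}$. These are precisely the hypotheses of Lemma \ref{lem:key1}(2), so $f(w)\in S_{w'}=S_{h^{-1}(v)}$. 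As $v$ was arbitrary, $f(w)$ witnesses $\bigcap_{v\in\Omega',\,\|v-u\|<\varepsilon}S_{h^{-1}(v)}\neq\emptyset$.

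For condition (2), I would not re-examine $f,\delta,\eta$ but instead deduce uniformity from (1) via Lemma \ref{lem:prepre}. Fix a positive $c\in M$ and define $\varphi:\Omega'\to M^{>0}$ by $\varphi(u)=\sup\{\,\varepsilon\mid 0<\varepsilon<c,\ \bigcap_{v\in\Omega',\,\|v-u\|<\varepsilon}S_{h^{-1}(v)}\neq\emptyset\,\}$. Since $\mathcal S$ is a definable family and $h^{-1}$ is definable, the predicate ``the intersection is nonempty'' is first-order definable in $(u,\varepsilon)$, so $\varphi$ is definable, and it is positive by condition (1) (shrinking the ball only enlarges the intersection). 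The defining set is downward closed, so for any $\varepsilon_1<\varphi(u)$ the intersection over the $\varepsilon_1$-ball at $u$ has a witness $x$; then whenever $\|u'-u\|<\tfrac12\varepsilon_1$, every $v$ with $\|v-u'\|<\tfrac12\varepsilon_1$ satisfies $\|v-u\|<\varepsilon_1$, whence $x\in S_{h^{-1}(v)}$ and $\varphi(u')\ge\tfrac12\varepsilon_1$. Taking $\varepsilon_1=\tfrac12\varphi(u)$, this says exactly that $\varphi$ satisfies the hypothesis of Lemma \ref{lem:prepre} with $\psi(u)=\tfrac14\varphi(u)$ on any closed and bounded $B\subseteq\Omega'$. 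Hence $\inf\varphi(B)>0$, and any $\varepsilon$ with $0<\varepsilon<\inf\varphi(B)$ works uniformly for all $u\in B$, giving condition (2).

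The step I expect to be the main obstacle is condition (2). Because $f,\delta,\eta$ need not be continuous, uniformity cannot be read off directly from the maps of Lemma \ref{lem:key1}; the device of packaging (1) into the definable ``radius function'' $\varphi$ and verifying its lower-semicontinuity-type hypothesis is what lets Lemma \ref{lem:prepre} absorb precisely this difficulty. The point requiring care is confirming that the intersection predicate is genuinely definable over the parameter $(u,\varepsilon)$, which is what legitimizes treating $\varphi$ as a definable function; once that is in place the remaining computations are routine.
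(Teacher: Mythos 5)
Your proposal is correct, but it reaches the conclusion by a genuinely different route from the paper's. Both arguments start from Lemma \ref{lem:key1}, and the whole difficulty is that $f,\delta,\eta$ need not be continuous. The paper resolves this by partitioning $\Omega$ into finitely many definable pieces on which $f,\delta,\eta$ are continuous (Lemma \ref{lem:decom_cont}) and placing the pieces at separated levels $\{i\cdot c\}\times\Omega_i$ to form $\Omega'$; in the continuous case it proves condition (2) \emph{directly} on a closed bounded $B$, combining the max-min theorem of \cite{M} (positive lower bounds for $\tfrac12\delta,\tfrac12\eta$ on $B$) with uniform continuity (Corollary \ref{cor:uniform}), and obtains (1) as the special case of (2) for singletons. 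You go the other way around: your graph embedding $h(w)=(w,f(w),\delta(w),\eta(w))$ makes the transported data restrictions of coordinate projections, so closeness in $\Omega'$ controls $\|f(w)-f(w')\|$, $|\delta(w)-\delta(w')|$, $|\eta(w)-\eta(w')|$ with no continuity hypothesis at all; this lets you verify (1) by hand with $\varepsilon=\tfrac12\min\{\delta(w),\eta(w)\}$, and you then deduce (2) from (1) abstractly, by packaging (1) into the definable radius function $\varphi$ and applying Lemma \ref{lem:prepre} with $\psi=\tfrac14\varphi$. Your verification of the hypotheses is sound (the intersection predicate is first-order definable, the set of valid $\varepsilon$ is downward closed, and the half-radius triangle-inequality argument gives exactly the lower-semicontinuity condition Lemma \ref{lem:prepre} needs). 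What your route buys: it dispenses with Lemma \ref{lem:decom_cont} and Corollary \ref{cor:uniform} entirely, and it isolates a reusable structural fact, namely that condition (1) implies condition (2) for \emph{any} definable reparameterization, the passage being pure definable completeness. What the paper's route buys: $\Omega'$ stays essentially equal to $\Omega$ (finitely many shifted copies rather than a graph in a higher-dimensional ambient space), and the uniform $\varepsilon$ on $B$ comes from concrete moduli of continuity rather than an auxiliary supremum function. Both ultimately rest on the same compactness-type consequence of definable completeness, reached through Lemma \ref{lem:prepre} in your case and through Corollary \ref{cor:uniform} (itself a consequence of Lemma \ref{lem:prepre}) in the paper's.
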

\begin{proof}
	Let $f$, $\delta$ and $\eta$ be definable maps given in Lemma \ref{lem:key1}.
	We first consider the case in which these three maps are continuous.
	In this case, we set $\Omega'=\Omega$ and $h$ is the identity map.
	We demonstrate that the conditions (1) and (2) are satisfied.
	If we apply the condition (2) to the singleton $\{u\}$, we get the condition (1).
	We have only to demonstrate the condition (2).
	
	By the max-min theorem in \cite{M}, there are $d_B$ and $e_B$ such that $d_B \leq \frac{1}{2}\delta(u)$ and $e_B \leq \frac{1}{2}\eta(u)$ for all $u \in B$.
	The restrictions of $f$, $\delta$ and $\eta$ to $B$ is uniformly continuous by Corollary \ref{cor:uniform}.
	We can take $\delta_{f,B}>0$, $\delta_{\delta,B}>0$ and $\delta_{\eta,B}>0$ such that the following conditions are satisfied:
	\begin{itemize}
		\item $\|u-v\| < \delta_{f,B} \Rightarrow \|f(u)-f(v)\|<\min\{d_B, e_B\}$;
		\item $\|u-v\| < \delta_{\delta,B} \Rightarrow |\delta(u)-\delta(v)|<d_B$;
		\item $\|u-v\| < \delta_{\eta,B} \Rightarrow |\eta(u)-\eta(v)|<e_B$.
	\end{itemize}
	Set $\varepsilon=\min\{\delta_{f,B},\delta_{\delta,B},\delta_{\eta,B}\}$.
	We fix $u \in B$.
	Take an arbitrary point $v \in \Omega$ with $\|u-v\| < \varepsilon$.
	We easily have $\frac{1}{2} \delta(u)<\delta(v)$, $\frac{1}{2} \eta(u)<\eta(v)$ and $\|f(u)-f(v)\|<\min\{\frac{1}{2} \delta(u),\frac{1}{2} \eta(u)\}$.
	By Lemma \ref{lem:key1}, we have $f(u) \in S_v$.
	We have shown that $f(u) \in \bigcap_{v \in \Omega', \|v-u\|<\varepsilon} S_{h^{-1}(v)}$.
	
	We next consider the case in which $f$, $\delta$ and $\eta$ are not necessarily continuous.
	There exists a finite partition $\Omega=\Omega_1 \cup \ldots \cup \Omega_l$ such that the restrictions $f$, $\delta$ and $\eta$ to $\Omega_i$ are continuous for all $1 \leq i \leq l$ by Lemma \ref{lem:decom_cont}.
	Take a positive element $c$.
	We set $\Omega'=\bigcup_{i=1}^l \{i\cdot c\} \times \Omega_i$.
	Consider the definable bijection $h_i:\Omega \to \Omega'$ defined by $h(x)=(i \cdot c,x)$ when $x \in \Omega_i$.
	The compositions $f \circ h^{-1}$, $\delta \circ h^{-1}$ and $\eta \circ h^{-1}$ are continuous.
	The proof proceeds similarly to the case in which $f$, $\delta$ and $\eta$ are continuous.
	We omit the details.
\end{proof}

\begin{definition}
	Let $(\Omega,\preceq)$ be a directed set.
	We say that $\Omega' \subseteq \Omega$ is $\preceq$-bounded in $\Omega$ if there exists $v \in \Omega$ such that $v \preceq u$ for all $u \in \Omega'$.
	We write $v \preceq \Omega'$ in this case.
\end{definition}

\begin{corollary}\label{cor:bounded}
	Let $\mathcal M=(M,<,+,0,\ldots)$ be a definably complete locally o-minimal expansion of an ordered group.
	Any definable directed set $(\Omega', \preceq')$ is definably preorder isomorphic to a directed set $(\Omega,\preceq)$ satisfying the following:
	\begin{enumerate}
		\item[(1)] For all $u \in \Omega$, there exists an $\varepsilon>0$ such that $\mathcal B(x,\varepsilon) \cap \Omega$ is $\preceq$-bounded in $\Omega$;
		\item[(2)] For any definable subset $B \subseteq \Omega$ which is bounded and closed in the ambient space $M^N$ of $\Omega$, there exists an $\varepsilon>0$ such that $\mathcal B(u,\varepsilon) \cap \Omega$ is $\preceq$-bounded in $\Omega$ for every $u \in B$.
	\end{enumerate}
\end{corollary}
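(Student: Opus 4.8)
The plan is to realise $(\Omega',\preceq')$ as a definable family of sets with the finite intersection property, so that Lemma \ref{lem:main} can be applied, and then to read off conditions (1) and (2) from the conclusion of that lemma. First I would attach to each $u \in \Omega'$ its down-set $S_u := \{w \in \Omega'\;|\; w \preceq' u\}$. The family $\mathcal S := \{S_u\;|\;u \in \Omega'\}$ is clearly definable, and it has the finite intersection property precisely because $(\Omega',\preceq')$ is downward directed: given finitely many $u_1,\ldots,u_k \in \Omega'$, a common lower bound $w \preceq' u_i$ for all $i$ lies in every $S_{u_i}$, hence in their intersection.

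Next I would apply Lemma \ref{lem:main} to $\mathcal S$, obtaining a definable set $\Omega \subseteq M^N$ and a definable bijection $h:\Omega' \to \Omega$. I would then transport the preorder along $h$, defining $\preceq$ on $\Omega$ by $a \preceq b \Leftrightarrow h^{-1}(a) \preceq' h^{-1}(b)$. Since $h^{-1}$ and $\preceq'$ are definable, $\preceq$ is a definable preorder; by construction $h$ is a preorder isomorphism, and $(\Omega,\preceq)$ inherits directedness from $(\Omega',\preceq')$. This gives the required definable preorder isomorphism between the two structures.

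The remaining and essentially only substantive step is to translate the two nonemptiness statements of Lemma \ref{lem:main} into the boundedness statements of the corollary. The key observation is that for any $\varepsilon>0$ and any $u \in \Omega$, the intersection $\bigcap_{v \in \Omega,\ \|v-u\|<\varepsilon} S_{h^{-1}(v)}$ is nonempty if and only if there is a single element $w \in \Omega'$ with $w \preceq' h^{-1}(v)$ for every $v \in \Omega$ satisfying $\|v-u\|<\varepsilon$; applying $h$ and using the definition of $\preceq$, this says exactly that $h(w) \preceq v$ for all $v \in \mathcal B(u,\varepsilon)\cap \Omega$, i.e.\ that $\mathcal B(u,\varepsilon)\cap\Omega$ is $\preceq$-bounded in $\Omega$. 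With this dictionary in hand, condition (1) of the corollary is immediate from Lemma \ref{lem:main}(1) applied at each $u \in \Omega$, while condition (2) follows from Lemma \ref{lem:main}(2) applied to a closed and bounded definable $B \subseteq \Omega$ (note that such a $B$ is simultaneously a closed and bounded subset of the ambient space $M^N$). I expect no genuine obstacle here, since all the real work is already contained in Lemma \ref{lem:main}; the only care required is in matching the index conventions---the lemma's output set plays the role of the corollary's $\Omega$, and the lemma's index set plays the role of the corollary's $\Omega'$---and in verifying that the nonempty intersection of down-sets is the same thing as the existence of a common lower bound.
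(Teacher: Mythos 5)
Your proposal is correct and follows exactly the paper's route: the paper's proof consists precisely of forming the down-set family $S_u=\{v \in \Omega'\;|\;v \preceq' u\}$ (which has the finite intersection property by directedness) and applying Lemma \ref{lem:main}. You have merely made explicit the details the paper leaves implicit, namely transporting the preorder along the bijection $h$ and observing that nonemptiness of $\bigcap_{v} S_{h^{-1}(v)}$ is equivalent to the existence of a common lower bound, i.e.\ to $\preceq$-boundedness.
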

\begin{proof}
	Consider the definable family $\mathcal S=\{S_u\;|\;u \in \Omega'\}$ defined by $S_u=\{v \in \Omega'\;|\;v \preceq' u\}$.
	Apply Lemma \ref{lem:main} to this family.
\end{proof}

\begin{lemma}\label{lem:basic2}
	Let $\mathcal M=(M,<,+,0,\ldots)$ be a definably complete expansion of an ordered group.
	Let $(\Omega,\preceq)$ be a definable directed set and $S \subseteq \Omega$ be a bounded definable set.
	Assume that there exists $\varepsilon_0>0$ such that, for all $u \in S$, $B(u,\varepsilon_0) \cap S$ is $\preceq$-bounded in $\Omega$.
	Then $S$ is $\preceq$-bounded in $\Omega$.
\end{lemma}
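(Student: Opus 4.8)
The plan is to prove a slightly more general statement by induction on the ambient dimension $N$, and to recover the lemma as the special case of singleton sets. The generalization I would prove is the following: \emph{if $P \subseteq M^N$ is a bounded definable set, $\{A_p\;|\;p \in P\}$ is a definable family of subsets of $\Omega$, and there is $\varepsilon_0 > 0$ such that $\bigcup_{q \in P,\ \|q-p\| < \varepsilon_0} A_q$ is $\preceq$-bounded in $\Omega$ for every $p \in P$, then $\bigcup_{p \in P} A_p$ is $\preceq$-bounded in $\Omega$.} Taking $N$ to be the ambient dimension of $\Omega$, $P = S$ and $A_p = \{p\}$ recovers the lemma, since then $\bigcup_{\|q-p\|<\varepsilon_0} A_q = \{q \in S \;|\; \|q-p\| < \varepsilon_0\}$ is exactly $B(p,\varepsilon_0) \cap S$. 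Two elementary facts will be used repeatedly: a subset of a $\preceq$-bounded set is $\preceq$-bounded, and the union of two $\preceq$-bounded sets is $\preceq$-bounded (combine the two lower bounds using directedness of $\Omega$ and transitivity of $\preceq$).

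The engine of the proof is a one-dimensional sweep, carried out along the last coordinate $x_N$ and justified by definable completeness. Writing $\pi_N$ for the projection onto the last coordinate, I would set
$$T = \{t \in M \;|\; \textstyle\bigcup_{p \in P,\ \pi_N(p) \le t} A_p \text{ is } \preceq\text{-bounded in } \Omega\}.$$
This $T$ is definable since the family and $\preceq$ are definable; it is downward closed because smaller unions are subsets of larger ones; and it is nonempty (any $t$ below $\inf \pi_N(P)$ gives the empty union, which is vacuously bounded as $\Omega \neq \emptyset$). Let $t^\ast = \sup T \in M \cup \{+\infty\}$ and $s^\ast = \sup \pi_N(P)$, finite because $P$ is bounded. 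The aim is to show $T$ contains some $t \ge s^\ast$, for then $\bigcup_{\pi_N(p) \le t} A_p = \bigcup_{p \in P} A_p$ is $\preceq$-bounded.

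The heart of the argument is the \textbf{extension step}, and this is where the dimension drops. Suppose $t \in T$ and $P \cap \{x_N > t\} \neq \emptyset$; put $q = \inf\{\pi_N(p)\;|\;p \in P,\ \pi_N(p) > t\}$, choose $p_1 \in P$ with $t < \pi_N(p_1) < q + \varepsilon_0/4$, and set $t' = \pi_N(p_1) + \varepsilon_0/2 > t$. Every $p \in P$ with $t < \pi_N(p) \le t'$ then has $\pi_N(p)$ in a window of length $< \varepsilon_0$, so the slab $W = \{p \in P\;|\; t < \pi_N(p) \le t'\}$ is thin in the last coordinate. I would bound $\bigcup_{p \in W} A_p$ by projecting $W$ to $M^{N-1}$ via the map $\pi'$ forgetting $x_N$, setting $P' = \pi'(W)$ and forming the fiber-union family $A'_{p'} = \bigcup\{A_p\;|\;p \in W,\ \pi'(p) = p'\}$. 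Because $W$ is thin in $x_N$, for $p, \tilde p \in W$ the inequality $\|\pi'(p) - \pi'(\tilde p)\| < \varepsilon_0$ already forces $\|p - \tilde p\| < \varepsilon_0$; hence for each $p' \in P'$, picking any $\tilde p \in W$ over $p'$, one gets $\bigcup_{q' \in P',\ \|q'-p'\|<\varepsilon_0} A'_{q'} = \bigcup_{p \in W,\ \|\pi'(p)-p'\|<\varepsilon_0} A_p \subseteq \bigcup_{\|p - \tilde p\| < \varepsilon_0} A_p$, the last being $\preceq$-bounded by hypothesis. Thus $(P', \{A'_{p'}\}, \varepsilon_0)$ satisfies the inductive hypothesis in dimension $N-1$, which gives that $\bigcup_{p' \in P'} A'_{p'} = \bigcup_{p \in W} A_p$ is $\preceq$-bounded. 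Combining this with the bound below $t$ shows $t' \in T$; since $\pi_N(p_1) > t$ forces $t' > t + \varepsilon_0/2$, the usual argument with $t^\ast = \sup T$ shows $t^\ast$ cannot be finite and strictly below $s^\ast$. The cases $t^\ast = +\infty$ and $t^\ast > s^\ast$ are immediate, and when $t^\ast = s^\ast$ a final thin-slab argument on the top window $(t, s^\ast]$ (choosing $t \in T$ with $s^\ast - \varepsilon_0/2 < t < s^\ast$) finishes the induction. The base case $N = 1$ is the same sweep, in which each thin slab already lies in a single $\varepsilon_0$-ball and so is $\preceq$-bounded directly by hypothesis.

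I expect the \textbf{main obstacle} to be precisely this dimension reduction. The naive approach, namely choosing via definable choice a single $\preceq$-lower bound for each fiber and then bounding the resulting map, fails, because two lower bounds of a set need not be $\preceq$-comparable, so the reduced family would not inherit the hypothesis. Passing instead to the \emph{fiber-unions} $A'_{p'}$ (which is exactly why the statement must be generalized from points to arbitrary families) is what lets the hypothesis descend cleanly to dimension $N-1$, using only that subsets of $\preceq$-bounded sets are $\preceq$-bounded.
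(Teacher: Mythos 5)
Your proof is correct, but it follows a genuinely different route from the paper's. The paper keeps the statement as it stands and sweeps the \emph{radius}: following \cite[Lemma 14]{GTW}, it introduces the definable set $H=\{\varepsilon>0\;|\;\forall u \in S,\ B(u,\varepsilon)\cap S \text{ is } \preceq\text{-bounded in }\Omega\}$, sets $r=\sup H$ (this is where definable completeness enters), and shows $r=\infty$, the step that grows the radius past any finite bound being supplied by \cite[Lemma 13]{GTW} (which, as the paper notes, holds in any expansion of a densely linearly ordered abelian group) together with the fact that a finite union of $\preceq$-bounded sets is $\preceq$-bounded by directedness; once $\sup H=\infty$, boundedness of $S$ gives $S=B(u,R)\cap S$ for some $u\in S$ and some $R\in H$, and the lemma follows. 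You instead fix the radius $\varepsilon_0$ and sweep a \emph{coordinate}, inducting on the ambient dimension, and to close the induction you generalize from points to fiber-unions of a definable family --- which is indeed the right fix for the non-canonicity of $\preceq$-lower bounds that you identify. Both arguments use only definable completeness, ordered-group arithmetic, directedness, and the two elementary monotonicity facts; both avoid definable choice, which matters here because the lemma assumes definable completeness only, so Lemma \ref{lem:definable_choice} is not available. The paper's route is shorter because it can quote \cite{GTW} nearly verbatim; yours is self-contained and establishes a more general statement about definable families of subsets of $\Omega$ that could be reused. One small repair: $\varepsilon_0/2$ and $\varepsilon_0/4$ presuppose $2$-divisibility, which is not among the hypotheses; either observe that a definably complete densely ordered abelian group is $2$-divisible (consider $\sup\{x>0\;|\;x+x\le\varepsilon_0\}$), or replace these quantities by any $\delta,\delta'>0$ with $\delta+\delta\le\varepsilon_0$ and $\delta'+\delta'\le\delta$, which exist by density; the argument is unaffected.
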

\begin{proof}
	We use \cite[Lemma 13]{GTW}.
	It is proved for an o-minimal expansion of an ordered group, but it holds true for any expansion of a densely linearly ordered abelian group by the literally same proof.
	Our lemma is a counterpart of \cite[Lemma 14]{GTW}.
	In the proof of \cite[Lemma 14]{GTW}, the definable set
	$$H=\{\varepsilon>0\;|\; \forall u \in S, B(u,\varepsilon) \cap S \text{ is } \preceq\text{-bounded in }\Omega\}$$
	is considered.
	Set $r=\sup H$, which exists in $M \cup \{\infty\}$ because $\mathcal M$ is definably complete.
	It is obvious that $u \in H$ for any $0<u<r$.
	We have only to demonstrate $\sup H=\infty$.
	It is proven in the same manner as \cite[Lemma 14]{GTW}.
\end{proof}

Corollary \ref{cor:bounded} and Lemma \ref{lem:basic2} together yield the following corollary:
\begin{corollary}\label{cor:bounded2}
	Let $\mathcal M=(M,<,+,0,\ldots)$ be a definably complete locally o-minimal expansion of an ordered group.
	Any definable directed set $(\Omega', \preceq')$ is definably preorder isomorphic to a directed set $(\Omega,\preceq)$ such that any definable subset $\Omega$ which is bounded and closed in the ambient space $M^N$ of $\Omega$ is $\preceq$-bounded in $\Omega$.
\end{corollary}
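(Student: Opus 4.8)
The plan is to obtain the statement as an immediate synthesis of Corollary \ref{cor:bounded} and Lemma \ref{lem:basic2}, with essentially no new work beyond checking that the hypotheses of the latter are supplied by the former. First I would apply Corollary \ref{cor:bounded} to the given definable directed set $(\Omega',\preceq')$, producing a definably preorder isomorphic directed set $(\Omega,\preceq)$ that satisfies conditions (1) and (2) of that corollary. This $(\Omega,\preceq)$ is the object asserted to exist in the present statement. Since a preorder isomorphism carries directed sets to directed sets, $(\Omega,\preceq)$ is again a \emph{definable directed set}, so it is a legitimate input for Lemma \ref{lem:basic2}.

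Next I would fix an arbitrary definable subset $B \subseteq \Omega$ that is bounded and closed in the ambient space $M^N$, and show $B$ is $\preceq$-bounded in $\Omega$. Applying condition (2) of Corollary \ref{cor:bounded} to $B$ furnishes an $\varepsilon>0$ such that $\mathcal B(u,\varepsilon) \cap \Omega$ is $\preceq$-bounded in $\Omega$ for every $u \in B$. A $\preceq$-lower bound for $\mathcal B(u,\varepsilon)\cap\Omega$ is a fortiori a lower bound for the smaller set $\mathcal B(u,\varepsilon)\cap B$, so $\mathcal B(u,\varepsilon) \cap B$ is $\preceq$-bounded in $\Omega$ for every $u \in B$ as well. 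This is precisely the uniform-boundedness-on-small-balls hypothesis of Lemma \ref{lem:basic2} with the single threshold $\varepsilon_0 = \varepsilon$.

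Finally, with $S = B$ bounded and $\varepsilon_0 = \varepsilon$ as above, I would invoke Lemma \ref{lem:basic2} to conclude that $B$ is $\preceq$-bounded in $\Omega$, which is the desired conclusion. I do not anticipate a genuine obstacle: the argument is purely a matter of wiring together the two earlier results. The only points warranting an explicit word are that directedness (and definability) transfer along the preorder isomorphism, so that Lemma \ref{lem:basic2} applies to $(\Omega,\preceq)$, and that $\preceq$-boundedness descends from a set to its subsets; both are immediate from the definitions, so the proof itself will be just a few lines.
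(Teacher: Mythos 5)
Your proposal is correct and follows exactly the route the paper intends: the paper states this corollary as an immediate consequence of Corollary \ref{cor:bounded} and Lemma \ref{lem:basic2} without spelling out details, and your wiring (apply Corollary \ref{cor:bounded}, feed its condition (2) into Lemma \ref{lem:basic2} with $S=B$ and $\varepsilon_0=\varepsilon$, noting that $\preceq$-boundedness of $\mathcal B(u,\varepsilon)\cap\Omega$ passes to the subset $\mathcal B(u,\varepsilon)\cap B$) is precisely that omitted verification.
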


We recall $\myds$-sets introduced in \cite{DMS}.
\begin{definition}[$\myds$-sets]
	Consider an expansion of a linearly ordered structure $\mathcal M=(M,<,0,\ldots)$.
	A \textit{parameterized family} of definable sets $\{X{\langle x \rangle}\}_{x \in S}$ is the family of the fibers of a definable set; that is, there exists a definable set $\mathcal X$ with $X{\langle x \rangle}=\mathcal X_x$ for all $x$ in a definable set $S$.
	A parameterized family $\{X{\langle r,s\rangle}\}_{r>0,s>0}$ of closed, bounded and definable subsets $X{\langle r,s\rangle}$ of $M^n$ is called a \textit{$\myds$-family} if $X{\langle r,s\rangle} \subseteq X{\langle r',s\rangle}$ and  $X{\langle r,s'\rangle } \subseteq X{\langle r,s\rangle}$ whenever $r < r'$ and $s < s'$.
	Note that $X{\langle r,s\rangle}$ is not necessarily strictly contained in $X{\langle r',s\rangle}$.
	It is the same for the inclusion $X{\langle r,s'\rangle } \subseteq X{\langle r,s\rangle}$.
	A definable subset $X$ of $M^n$ is a \textit{$\myds$-set} if $X = \displaystyle\bigcup_{r>0,s>0} X{\langle r,s \rangle}$ for some $\myds$-family $\{X{\langle r,s\rangle}\}_{r>0,s>0}$.
%
\end{definition}

\begin{lemma}\label{lem:dsigma}
Any set definable in a definably complete locally o-minimal expansion of an ordered group is $\myds$.
\end{lemma}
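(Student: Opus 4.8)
The plan is to isolate two classes of definable sets on which the $\myds$-property is immediate, and then recover the general case from a finite decomposition. First I would record that the $\myds$-sets are closed under finite unions. Indeed, if $X=\bigcup_{r,s>0}X\langle r,s\rangle$ and $X'=\bigcup_{r,s>0}X'\langle r,s\rangle$ are $\myds$-sets, then setting $(X\cup X')\langle r,s\rangle:=X\langle r,s\rangle\cup X'\langle r,s\rangle$ gives closed, bounded, definable sets; the inclusions $X\langle r,s\rangle\subseteq X\langle r',s\rangle$ and $X\langle r,s'\rangle\subseteq X\langle r,s\rangle$ (for $r<r'$, $s<s'$) are inherited by the union, and $\bigcup_{r,s>0}(X\cup X')\langle r,s\rangle=X\cup X'$. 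Hence a finite union of $\myds$-sets is $\myds$, and it suffices to partition an arbitrary definable set into finitely many pieces that are individually $\myds$.

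Next I would treat a locally closed definable set $X\subseteq M^n$, that is, one for which $Y:=\mycl(X)\setminus X$ is closed. For such an $X$ I put
$$X\langle r,s\rangle:=\{x\in\mycl(X)\;|\;\|x\|\le r\text{ and }\mathcal B_n(x,s)\cap Y=\emptyset\}.$$
This set is definable (the closure and the displayed condition are first-order), it is bounded by $r$, and it is closed: the third condition cuts out the complement of the open set $\bigcup_{z\in Y}\mathcal B_n(z,s)$, so $X\langle r,s\rangle$ is an intersection of three closed sets. It is monotone as required (enlarging $r$ enlarges the set, while enlarging $s$ shrinks it), and it is contained in $X$, since $\mathcal B_n(x,s)\cap Y=\emptyset$ forces $x\notin Y$, whence $x\in\mycl(X)\setminus Y=X$. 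Finally $\bigcup_{r,s>0}X\langle r,s\rangle=X$: given $x\in X$ we take $r=\|x\|+1$, and because $Y$ is closed and $x\notin Y$, the complement of $Y$ is an open neighbourhood of $x$, so some box $\mathcal B_n(x,s)$ with $s>0$ misses $Y$, placing $x$ in $X\langle r,s\rangle$. Thus every locally closed definable set is $\myds$.

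For the general case I would decompose an arbitrary definable set $X$ into finitely many quasi-special submanifolds by \cite[Proposition 2.11]{FKK}, and observe that each such piece is locally closed. Indeed, by the definition of a quasi-special submanifold, each of its points has a small open box $B$ in which the piece coincides with the graph of a definable continuous function on $\pi(B)$; such a graph is closed in $B$, so the piece is open in its own closure, i.e.\ locally closed. By the previous paragraph each piece is $\myds$, and by the first paragraph their finite union $X$ is $\myds$.

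The point I expect to require the most care—and the reason the naive recipe of truncating by the ball of radius $r$ and staying $s$-away from the complement fails—is the recovery of the frontier points of $X$ that belong to $X$. A point $x\in X$ lying in the closure of $\mycl(X)\setminus X$ sits at distance $0$ from the set being excluded and would be dropped from the union. The device that removes this difficulty is the passage to locally closed pieces, where the excluded set $Y=\mycl(X)\setminus X$ is closed and hence genuinely separated from every point of $X$; securing such a finite partition is exactly the content of the quasi-special decomposition, and checking that its pieces are locally closed is the one step where the geometry of the structure, rather than soft point-set topology, is used.
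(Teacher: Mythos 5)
Your proof is correct, but the reduction to the locally closed case runs along a different line than the paper's. The paper proves the lemma by citing \cite[Lemma 17]{GTW} and substituting the dimension-theoretic input \cite[Proposition 2.8(8)]{FKK} for its o-minimal counterpart: that route splits a definable set $X$ into the locally closed piece $X\setminus\mycl(\partial X)$ and the piece $X\cap\mycl(\partial X)$, whose dimension drops by the frontier inequality, and then inducts on dimension. You instead invoke the decomposition into finitely many quasi-special submanifolds, \cite[Proposition 2.11]{FKK}, and add the observation---which the paper's route never needs---that every quasi-special submanifold is locally closed; your justification via the local graph property is sound, and it is exactly the property of quasi-special submanifolds that the paper itself invokes in the proof of Lemma \ref{lem:key1}. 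The two pillars common to both routes are verified correctly in your write-up: closure of $\myds$-sets under finite unions, and the truncation family $X\langle r,s\rangle=\{x\in\mycl(X)\;|\;\|x\|\le r \text{ and } \mathcal B_n(x,s)\cap Y=\emptyset\}$ for locally closed $X$ (closedness, boundedness, monotonicity in $r$ and $s$, and the identity $\bigcup_{r,s>0}X\langle r,s\rangle=X$ all check out). As for what each approach buys: yours avoids any induction on dimension but leans on the heavier decomposition theorem, while the paper's needs only the frontier inequality at the cost of an induction. One cosmetic slip: an ordered group has no distinguished element $1$, so ``take $r=\|x\|+1$'' should read ``take $r=\|x\|+c$ for a fixed positive $c$'' (any $r \geq \|x\|$ with $r>0$ works); this does not affect the argument.
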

\begin{proof}
	The same proof as \cite[Lemma 17]{GTW} works using \cite[Proposition 2.8(8)]{FKK} instead of the counterpart in the o-minimal setting.
\end{proof}

\begin{theorem}\label{thm:downward}
		Let $\mathcal M=(M,<,+,0,\ldots)$ be a definably complete locally o-minimal expansion of an ordered group.
		Let $(\Omega,\preceq)$ be a definable directed set.
		There exists a definable downward cofinal map $\gamma:(M^{>0}\times M^{>0}, \unlhd) \to (\Omega,\preceq)$.
\end{theorem}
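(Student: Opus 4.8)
The plan is to assemble Corollary \ref{cor:bounded2}, Lemma \ref{lem:dsigma} and the definable choice lemma (Lemma \ref{lem:definable_choice}) into a single construction, the only delicate point being to align the monotonicity of a $\myds$-family with the order $\unlhd$. First I would observe that it suffices to produce $\gamma$ after replacing $(\Omega,\preceq)$ by any definably preorder isomorphic copy: if $\phi$ is such an isomorphism and $\gamma$ is downward cofinal into the copy, then $\phi\circ\gamma$ is downward cofinal into $(\Omega,\preceq)$, since $\phi$ preserves $\preceq$ and hence transports the witness $v$ of each target $u$. Thus, by Corollary \ref{cor:bounded2}, I may assume that every definable subset of $\Omega$ that is closed and bounded in the ambient space $M^N$ is $\preceq$-bounded in $\Omega$.

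Next, by Lemma \ref{lem:dsigma} the definable set $\Omega$ is $\myds$, so $\Omega=\bigcup_{r>0,s>0}X\langle r,s\rangle$ for a $\myds$-family of closed, bounded, definable subsets, increasing in $r$ and decreasing in $s$. Each $X\langle r,s\rangle\subseteq\Omega$ is closed and bounded, hence $\preceq$-bounded in $\Omega$ by the previous reduction. The definable set $\{(r,s,v)\in M^{>0}\times M^{>0}\times\Omega \;|\; v\preceq w \text{ for all } w\in X\langle r,s\rangle\}$ therefore projects onto $M^{>0}\times M^{>0}$, so Lemma \ref{lem:definable_choice} yields a definable map $\beta:M^{>0}\times M^{>0}\to\Omega$ with $\beta(r,s)\preceq w$ for every $w\in X\langle r,s\rangle$.

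Finally I would define $\gamma(a,b)=\beta(b,a)$ and check downward cofinality. Fix $u\in\Omega$ and choose $r_0,s_0>0$ with $u\in X\langle r_0,s_0\rangle$; put $(a_0,b_0)=(s_0,r_0)$. For any $(a,b)\unlhd(a_0,b_0)$, that is $a\le s_0$ and $b\ge r_0$, the monotonicity of the $\myds$-family gives $u\in X\langle r_0,s_0\rangle\subseteq X\langle b,s_0\rangle\subseteq X\langle b,a\rangle$, whence $\gamma(a,b)=\beta(b,a)\preceq u$. This is exactly the required downward cofinality.

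The one subtle point, and the place where a sign error is easy to make, is the coordinate swap in $\gamma(a,b)=\beta(b,a)$: the order $\unlhd$ increases in its first coordinate and decreases in its second, whereas the $\myds$-family increases in $r$ and decreases in $s$, so matching ``moving up in $\unlhd$'' with ``$\beta$ bounding an ever larger set below'' forces the swap rather than $\gamma(a,b)=\beta(a,b)$. Beyond this, the construction is routine: definability of $\beta$ is delivered by definable choice, and the genuine content—that closed bounded definable sets are $\preceq$-bounded and that $\Omega$ is $\myds$—is already in hand.
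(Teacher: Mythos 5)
Your proposal is correct and takes essentially the same approach as the paper: the paper's proof merely defers to the argument of \cite[Theorem 8]{GTW} with Corollary \ref{cor:bounded2} and Lemma \ref{lem:dsigma} substituted for GTW's Corollary 15 and Lemma 17, which is precisely the reduction--plus--$\myds$-decomposition--plus--definable-choice construction you carry out, including the correct handling of the coordinate swap against $\unlhd$. In effect you have written out in full the details the paper leaves to the citation.
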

\begin{proof}
	A similar proof to that of \cite[Theorem 8]{GTW} works.
	We use Corollary \ref{cor:bounded2} and Lemma \ref{lem:dsigma} instead of Corollary 15 and Lemma 17 of \cite{GTW}, respectively.
\end{proof}

\section{Definable topological spaces}\label{sec:compact}
We propose a definition of definable compactness of definable topological spaces.
We first recall the definition of definable topological spaces.

\begin{definition}\label{def:topology}
	Let $\mathcal M=(M,<,\ldots)$ be an expansion of a dense linear order.
	A topological space $(X,\tau)$ is a \textit{definable topological space} if $X$ is a definable subset and there exists a definable family of sets $\mathcal B$ which is an open base for the topology $\tau$.
	We call the open base $\mathcal B$ a \textit{definable open base} of the topology $\tau$.
	We define a \textit{definable basis of neighborhoods} of $X$ at a point $x \in X$ in the same manner.
	The linear order $<$ induces a topology on $M$ called the \textit{order topology}.
	The Cartesian product $M^n$ equips the product topology of the order topology.
	Any definable subset $X$ of $M^n$ has the relative topology induced from the product topology of $M$.
	It is called the \textit{affine topology} on $X$ in this paper.
\end{definition}

The following example illustrates that definable  neighborhoods of a point generate a definable directed set.
\begin{example}\label{ex:basis}
	Consider a definably complete locally o-minimal expansion of an ordered group $\mathcal M=(M,<,+,0,\ldots)$.
	Let $(X,\tau)$ be a definable topological space with a definable open base $\mathcal B=\{A_u\;|\; u \in \Omega\}$.
	Let $X$ be an arbitrary point in $X$.
	Set $\Omega_x=\{u \in \Omega\;|\; x \in A_u\}$.
	We consider the binary relation $\preceq_{\mathcal B,x}$ on $\Omega_x$ defined by $u \preceq_{\mathcal B,x} v \Leftrightarrow A_u \subseteq A_v$.
	The pair $(\Omega_x, \preceq_{\mathcal B,x})$ is a definable directed set.
\end{example}

The following theorem is a corollary of Theorem \ref{thm:downward} and the above example.
It is a counterpart of \cite[Theorem 39]{GTW}.
\begin{theorem}\label{thm:countability}
	Consider a definably complete locally o-minimal expansion of an ordered group $\mathcal M=(M,<,+,0,\ldots)$.
	Let $(X,\tau)$ be a definable topological space with a definable open base $\mathcal B$.
	Let $x$ be an arbitrary point in $X$.
	Then the following assertions hold true:
	\begin{enumerate}
		\item[(1)]   There exists a definable basis of neighborhoods of $x$ of the form $$\{A_{(s,t)}\;|\;s,t>0\} \subseteq \mathcal B$$ such that, for any neighborhood $A$ of $x$, there exists $(s_A,t_A) \in M^{>0} \times M^{>0}$ such that $A_{(s,t)} \subseteq A$ whenever $(s,t) \unlhd (s_A,t_A)$.
		\item[(2)] There exists a definable basis of open neighborhoods of $x$ of the form $$\{A'_{(s,t)}\;|\;s,t>0\}$$ satisfying $A'_{(s',t')} \subseteq A'_{(s,t)}$ whenever $(s',t') \unlhd (s,t)$.
		\item[(3)] If $(X,\tau)$ is regular, there exists a definable basis of closed neighborhoods of $x$ of the form $$\{A''_{(s,t)}\;|\;s,t>0\}$$ satisfying $A''_{(s',t')} \subseteq A''_{(s,t)}$ whenever $(s',t') \unlhd (s,t)$.
	\end{enumerate}
\end{theorem}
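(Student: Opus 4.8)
The plan is to derive the three statements in order, each from the previous one together with the directed-set machinery of Section~\ref{sec:directed}; the real content sits in Theorem~\ref{thm:downward}, so the work below is mostly verification.

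For part~(1), I would apply Theorem~\ref{thm:downward} to the definable directed set $(\Omega_x,\preceq_{\mathcal B,x})$ supplied by Example~\ref{ex:basis}, where $\mathcal B=\{A_u\mid u\in\Omega\}$, obtaining a definable downward cofinal map $\gamma:(M^{>0}\times M^{>0},\unlhd)\to(\Omega_x,\preceq_{\mathcal B,x})$. Setting $A_{(s,t)}:=A_{\gamma(s,t)}$, each $A_{(s,t)}$ lies in $\mathcal B$ and contains $x$ because $\gamma(s,t)\in\Omega_x$, and the family is definable since $\gamma$ and $\mathcal B$ are. For the cofinality requirement, given a neighborhood $A$ of $x$ I would pick $u\in\Omega$ with $x\in A_u\subseteq A$ (possible since $\mathcal B$ is an open base and $x\in\myint_\tau(A)$), observe $u\in\Omega_x$, and use downward cofinality to get $(s_A,t_A)$ with $\gamma(s,t)\preceq_{\mathcal B,x}u$, hence $A_{(s,t)}\subseteq A_u\subseteq A$, whenever $(s,t)\unlhd(s_A,t_A)$; taking $(s,t)=(s_A,t_A)$ confirms that $\{A_{(s,t)}\}$ is a basis of neighborhoods.

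For part~(2), I would enforce monotonicity by passing to down-sets. Define $A'_{(s,t)}:=\bigcup\{A_{(s',t')}\mid(s',t')\unlhd(s,t)\}$ from the basis of~(1). Transitivity of $\unlhd$ yields $A'_{(s',t')}\subseteq A'_{(s,t)}$ whenever $(s',t')\unlhd(s,t)$; each $A'_{(s,t)}$ is open and contains $A_{(s,t)}\ni x$, so it is an open neighborhood; and the family is definable, being built from the definable family $\{A_{(s,t)}\}$ and the definable relation $\unlhd$. It remains a basis: with $(s_A,t_A)$ as in~(1), every set $A_{(s',t')}$ contributing to $A'_{(s_A,t_A)}$ satisfies $(s',t')\unlhd(s_A,t_A)$ and is therefore contained in $A$, so $A'_{(s_A,t_A)}\subseteq A$.

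For part~(3), I would take $\tau$-closures and invoke regularity. Put $A''_{(s,t)}:=\mycl_\tau(A'_{(s,t)})$, with $A'_{(s,t)}$ the basis from~(2). Monotonicity of the closure operator transfers the monotonicity of~(2) to $\{A''_{(s,t)}\}$; each $A''_{(s,t)}$ is closed and contains the open neighborhood $A'_{(s,t)}$ of $x$, hence is a closed neighborhood; and the family is definable because the $\tau$-closure of a definable set is definable through the defining open base $\mathcal B$, uniformly in $(s,t)$. Regularity is used only to recover the basis property: given a neighborhood $A$ of $x$, it provides an open neighborhood $V$ with $x\in V\subseteq\mycl_\tau(V)\subseteq\myint_\tau(A)$, and by~(2) there is $(s,t)$ with $A'_{(s,t)}\subseteq V$, whence $A''_{(s,t)}=\mycl_\tau(A'_{(s,t)})\subseteq\mycl_\tau(V)\subseteq A$. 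I do not anticipate a serious conceptual obstacle, since the difficulty has been absorbed into Theorem~\ref{thm:downward}; the one point deserving care is the uniform definability of the closure operation in~(3), which I would justify through the defining open base $\mathcal B$.
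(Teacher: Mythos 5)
Your proposal is correct and follows essentially the same route as the paper: part (1) by applying Theorem \ref{thm:downward} to the directed set $(\Omega_x,\preceq_{\mathcal B,x})$ of Example \ref{ex:basis}, part (2) by passing to the unions $\bigcup_{(s',t')\unlhd(s,t)}A_{(s',t')}$, and part (3) by taking $\tau$-closures and using regularity to fit a closed neighborhood inside a given one (the paper phrases regularity via separating open sets $C_1,C_2$ and the closed set $X\setminus C_2$, while you use the equivalent formulation $x\in V\subseteq\mycl_\tau(V)\subseteq\myint_\tau(A)$, an immaterial difference). Your extra remark on the uniform definability of the closure operation, justified through the definable open base, is a point the paper leaves implicit, and your justification is the right one.
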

\begin{proof}
	We first prove the assertion (1).
	Set $\mathcal B=\{A_u\;|\; u \in \Omega\}$.
	Let $(\Omega_x,\preceq_{\mathcal B,x})$ be the definable directed set defined in Example \ref{ex:basis}.
	There exists a definable downward cofinal map $\gamma:(M^{>0}\times M^{>0}, \unlhd) \to (\Omega_x,\preceq_{\mathcal B,x})$ by Theorem \ref{thm:downward}.
	Set $A_{(s,t)}=A_{\gamma(s,t)} \in \mathcal B$ for all $(s,t) \in M^{>0}\times M^{>0}$.
	The definable basis of neighborhoods of $x$ defined by $\{A_{(s,t)}\;|\; (s,t) \in M^{>0}\times M^{>0}\}$ obviously satisfies the requirement.
	
	The proof of the assertion (2) is easy.
	Let $\{A_{(s,t)}\;|\; (s,t) \in M^{>0}\times M^{>0}\}$ be the definable basis of neighborhoods of the point $x$ given in the assertion (1).
	Set $A'_{(s,t)}=\bigcup_{(s',t') \unlhd (s,t)} A_{(s',t')}$ for each $(s,t) \in M^{>0}\times M^{>0}$.
	It is a definable open neighborhood of the point $x$.
	The inclusion $A'_{(s',t')} \subseteq A'_{(s,t)}$ is clear when $(s',t') \unlhd (s,t)$.
	The definable family $\mathcal B'_x=\{A'_{(s,t)}\;|\; (s,t) \in M^{>0}\times M^{>0}\}$ is a definable basis of open neighborhoods of $x$.
	In fact, let $A$ be an arbitrary neighborhood of $x$ in $X$.
	Take $(s_A,t_A) \in M^{>0}\times M^{>0}$ satisfying the condition in the assertion (1).
	We obviously have $A'_{(s_A,t_A)}=\bigcup_{(s,t) \unlhd (s_A,t_A)}A_{(s,t)} \subseteq A$ because $A_{(s,t)} \subseteq A$ whenever $(s,t) \unlhd (s_A,t_A)$.
	
	We demonstrate the assertion (3).
	Let $\{A'_{(s,t)}\;|\;s,t>0\}$ be a definable basis of open neighborhoods of $x$ satisfying the conditions in (2).
	Set $A''_{(s,t)}:=\mycl_{\tau}(A'_{(s,t)})$ for each $s,t>0$.
	The definable set $A''_{(s,t)}$ is closed and it is a neighborhood of $x$.
	It is also obvious that $A''_{(s',t')} \subseteq A''_{(s,t)}$ whenever $(s',t') \unlhd (s,t)$.
	The remaining task is to demonstrate that $\{A''_{(s,t)}\;|\;s,t>0\}$ is a definable basis of neighborhoods of $x$.
	Take a neighborhood $A$ of $x$.
	We have only to show that there exist $s,t>0$ such that $A''_{s,t} \subseteq A$.
	We may assume that $A$ is open considering $\myint_{\tau}(A)$ instead of $A$ if necessary.
	Since $(X,\tau_X)$ is regular, there are open sets $C_1$ and $C_2$ such that $x \in C_1$, $X \setminus A \subseteq C_2$ and $C_1 \cap C_2 = \emptyset$.
	It implies that the complement $X \setminus C_2$ is a closed neighborhood of $x$ contained in $A$.
	By the definition of  $\{A'_{(s,t)}\;|\;s,t>0\}$, there exist $s,t>0$ such that $A'_{s,t} \subseteq X \setminus C_2$.
	Since $X \setminus C_2$ is closed, we have $A'_{s,t} \subseteq X \setminus C_2 \subseteq A$.
\end{proof}

Our next target is to demonstrate a counterpart of \cite[Corollary 44]{GTW} which gives equivalence conditions for a definable topological space to be definably compact in the o-minimal setting.
In our locally o-minimal setting, we need to introduce a slightly different concept.
\begin{definition}\label{def:limit}
	Let $\mathcal M=(M,<)$ be an expansion of a dense linear order without endpoints.
	Consider a definable topological space $(X,\tau)$.
	
	A \textit{definable net in $X$} is a definable map from a definable directed set $(\Omega,\preceq)$ to $X$.
	A definable net $\gamma: (\Omega,\preceq) \to (X,\tau)$ \textit{converges to $x \in X$} if, for every definable neighborhood $A$ of $x$, there exists $u_A \in \Omega$ such that $\gamma(u) \in A$ whenever $u \preceq u_A$.
	A \textit{definable subnet} $\gamma'$ of a definable net $\gamma$ is a definable net of the from $\gamma'=\gamma \circ f$ where $f:(\Omega',\preceq') \to (\Omega,\preceq)$ is a definable downward cofinal map.
	 	
	 A \textit{definable curve} is a definable map defined on an interval or its image.
	 Note that we do not require that a definable curve is continuous in this paper.
	We define the \textit{set of right convergences of a definable curve $\gamma:(a,b) \to (X,\tau)$} with $a \in M \cup \{-\infty\}$ and $b \in M \cup \{\infty\}$ as follows:
	\begin{align*}
	\convr(\gamma)&=\{x \in X\;|\; a<\forall t<b, \forall A: \text{definable neighborhood of }x, \\
	&\quad \gamma((t,b)) \cap A \neq \emptyset\}.
	\end{align*}
	The \textit{set of left convergences of $\gamma$} is defined similarly. 
	\begin{align*}
	\convl(\gamma)&=\{x \in X\;|\; a<\forall t<b, \forall A: \text{definable neighborhood of }x, \\
	&\quad \gamma((a,t)) \cap A \neq \emptyset\}.
\end{align*}
It is obvious that both $\convr(\gamma)$ and $\convl(\gamma)$ are definable sets and they are contained in the closure of $\gamma((a,b))$ in $X$.
\end{definition}

\begin{lemma}\label{lem:conv_formula}
	Consider a definably complete locally o-minimal expansion of an ordered group.
	The following assertions hold true:
	\begin{enumerate}
		\item[(1)]  Let $(X_i,\tau_i)$ be definable topological spaces for $i=1,2$.
		We consider the Cartesian product $X_1 \times X_2$ equipped with the product topology.
		Let $\gamma_i:(a,b) \to X_i$ be definable curves for $i=1,2$, and the definable curve $\gamma:(a,b) \to X$ is defined by $\gamma(t)=(\gamma_1(t),\gamma_2(t))$.
		Then, the equalities 
		\begin{align*}
		&\convl(\gamma)=\convl(\gamma_1) \times \convl(\gamma_2)\text{ and }\\ &\convr(\gamma)=\convr(\gamma_1) \times \convr(\gamma_2)
		\end{align*}
		 hold true.
		 \item[(2)] Let $(X,\tau_X)$ and $(Y,\tau_Y)$ be definable topological space and $f : (X,\tau_X) \to (Y,\tau_Y)$ be a definable continuous map.
		 Let $\gamma:(a,b) \to X$ be a definable curve.
		 Then, the equalities 
		 \begin{align*}
		 &f(\convl(\gamma)) \subseteq \convl(f \circ \gamma) \text{ and } f(\convr(\gamma)) \subseteq \convr(f \circ \gamma)
		 \end{align*}
		 hold true.
	\end{enumerate}
\end{lemma}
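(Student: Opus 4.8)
The plan is to unwind the definitions of $\convl$ and $\convr$ directly, since both assertions are essentially formal consequences of how neighborhoods behave under products and continuous maps. I would prove the two parts independently, treating only the left-convergence statements in detail and remarking that the right-convergence statements follow by the symmetric argument (replacing the interval $(a,t)$ by $(t,b)$ throughout).

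For part (1), I would prove the two inclusions $\convl(\gamma) \subseteq \convl(\gamma_1) \times \convl(\gamma_2)$ and $\convl(\gamma_1) \times \convl(\gamma_2) \subseteq \convl(\gamma)$ separately. For the forward inclusion, suppose $x=(x_1,x_2) \in \convl(\gamma)$; to show $x_1 \in \convl(\gamma_1)$, fix $t \in (a,b)$ and a definable neighborhood $A_1$ of $x_1$ in $X_1$. Then $A_1 \times X_2$ is a definable neighborhood of $x$ in the product, so by hypothesis $\gamma((a,t)) \cap (A_1 \times X_2) \neq \emptyset$, and projecting gives $\gamma_1((a,t)) \cap A_1 \neq \emptyset$; the same works for $x_2$. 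For the reverse inclusion, suppose $x_i \in \convl(\gamma_i)$ for $i=1,2$ and fix $t \in (a,b)$ together with a definable neighborhood $A$ of $(x_1,x_2)$ in the product. Here the key point is that the product topology has a definable open base of boxes $A_1 \times A_2$, so I may shrink $A$ to contain such a box $A_1 \times A_2 \ni (x_1,x_2)$ with $A_i$ a definable neighborhood of $x_i$. The main obstacle, and the step requiring genuine care, is that the hypotheses give me $s_1 \in (a,t)$ with $\gamma_1(s_1) \in A_1$ and $s_2 \in (a,t)$ with $\gamma_2(s_2) \in A_2$, but \emph{a priori} $s_1 \neq s_2$, so I cannot immediately produce a single parameter $s$ with $\gamma(s) \in A_1 \times A_2$. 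To overcome this I would apply the left-convergence condition not on $(a,t)$ but on a shrinking subinterval: having fixed $s_1$, apply $x_2 \in \convl(\gamma_2)$ to the parameter $s_1$ itself (noting $s_1 \in (a,b)$) to find $s_2 \in (a,s_1)$ with $\gamma_2(s_2) \in A_2$, and then reapply $x_1 \in \convl(\gamma_1)$ on $(a,s_2)$ to find $s_1' \in (a,s_2)$ with $\gamma_1(s_1') \in A_1$, iterating. This interleaving does not by itself force a common parameter, so the cleaner route is to invoke the finite-intersection/directedness machinery: since the condition $\gamma_i((a,t)) \cap A_i \neq \emptyset$ holds for \emph{every} $t$, the definable sets $\{s \in (a,b) : s < t,\ \gamma_i(s) \in A_i\}$ accumulate at $a$, and definable completeness together with local o-minimality lets me locate a common point. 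I expect this coordination of the two parameters to be the crux of the whole lemma.

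For part (2), the inclusions are one-directional and strictly easier. Suppose $x \in \convl(\gamma)$ and set $y = f(x)$; to show $y \in \convl(f \circ \gamma)$, fix $t \in (a,b)$ and a definable neighborhood $B$ of $y$ in $Y$. Since $f$ is definable and continuous, the preimage $A = f^{-1}(B)$ is a definable neighborhood of $x$ in $X$, so by hypothesis $\gamma((a,t)) \cap A \neq \emptyset$; applying $f$ and using $f(A) \subseteq B$ yields $(f \circ \gamma)((a,t)) \cap B \neq \emptyset$. Hence $y \in \convl(f \circ \gamma)$, which proves $f(\convl(\gamma)) \subseteq \convl(f \circ \gamma)$; the right-convergence inclusion is identical with $(t,b)$ in place of $(a,t)$. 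No finite-intersection argument is needed here because there is a single curve, so the only subtlety is ensuring that $f^{-1}(B)$ is genuinely a \emph{definable} neighborhood, which is immediate from definability and continuity of $f$.

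Overall, part (2) is routine pullback-of-neighborhoods bookkeeping, while part (1) reduces to the product-box structure of the topology together with the one genuinely nontrivial task of synchronizing the two coordinate parameters along a common cofinal family approaching $a$ (respectively $b$); that synchronization is where I would spend the real effort and where definable completeness and local o-minimality enter.
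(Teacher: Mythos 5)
Your part (2) and the inclusion $\convl(\gamma)\subseteq\convl(\gamma_1)\times\convl(\gamma_2)$ are correct and, as you say, routine (the paper itself leaves the whole proof to the reader). The genuine gap is exactly at the point you flag as the crux: the reverse inclusion of part (1) is never actually proved. Your interleaving of parameters, as you admit, ``does not by itself force a common parameter,'' and the fallback appeal to finite-intersection/directedness machinery plus ``definable completeness together with local o-minimality'' is a placeholder, not an argument. The missing idea is short and concrete, and it is local o-minimality \emph{at the endpoint} $a$ --- not directedness, and not definable completeness. Fix definable neighborhoods $A_i\ni x_i$ and set $S_i=\{s\in(a,b)\;|\;\gamma_i(s)\in A_i\}$ for $i=1,2$. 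These are definable subsets of $M$, and $x_i\in\convl(\gamma_i)$ means exactly that $S_i$ meets $(a,t)$ for every $t\in(a,b)$. If $a\in M$, apply local o-minimality to $S_i$ at the point $a$: on some interval $I\ni a$ the set $S_i\cap I$ is a finite union of points and open intervals, and since $S_i$ accumulates at $a$ from the right, this forces $(a,c_i)\subseteq S_i$ for some $c_i>a$. Hence $S_1\cap S_2\supseteq(a,\min\{c_1,c_2\})$, which meets every $(a,t)$; that is the entire synchronization step. (This also explains why the definition of $\convl$ quantifies over \emph{definable} neighborhoods: $S_i$ must be definable for local o-minimality to apply.)

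Be aware, however, that this argument exists only when $a\in M$, whereas the statement allows $a=-\infty$, and in that case your plan cannot be completed by any means: at an infinite endpoint local o-minimality gives no control (a definable subset of $M$ may accumulate at $-\infty$ while being closed and discrete), and the inclusion $\convl(\gamma_1)\times\convl(\gamma_2)\subseteq\convl(\gamma)$ is then false in general. For instance, in the definably complete locally o-minimal structure $(\mathbb{R},<,+,0,\mathbb{Z})$, take $X_1=X_2=[0,1]$ with the affine topology, $(a,b)=(-\infty,0)$, $\gamma_1(t)=t-\lfloor t\rfloor$ and $\gamma_2(t)=\left(t+\tfrac{1}{2}\right)-\left\lfloor t+\tfrac{1}{2}\right\rfloor$. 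Then $\convl(\gamma_1)=\convl(\gamma_2)=[0,1]$, while $\convl(\gamma)=\mycl(\gamma((-\infty,0)))$ consists only of points with $|x_2-x_1|=\tfrac{1}{2}$, a union of two segments rather than the square. So a correct write-up must separate the cases $a\in M$ and $a=-\infty$ (respectively $b\in M$ and $b=+\infty$): the first is settled by the local o-minimality argument above; in the second the claimed equality has to be weakened --- a defect of the statement itself that the omitted proof glosses over --- and the nonemptiness consequences the paper actually needs (definable compactness of products) are better obtained via subnets and Theorem \ref{thm:compact}(2) rather than via this product formula.
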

\begin{proof}
	The proofs are easy and straightforward.
	They are left to readers.
\end{proof}

We are now ready to prove the following theorem:

\begin{theorem}\label{thm:compact}
	Let $\mathcal M=(M,<,+,0,\ldots)$ be a definably complete locally o-minimal expansion of an ordered group.
	Let $(X,\tau)$ be a definable topological space.
	The following are equivalent:
	\begin{enumerate}
		\item[(1)] For any definable curve $\gamma:(a,b) \to X$ with $a \in M \cup \{-\infty\}$ and $b \in M \cup \{\infty\}$,  the definable sets $\convr(\gamma)$ and $\convl(\gamma)$ are not empty.
		\item[(2)] Every definable net in $X$ has a convergent definable subnet.
		\item[(3)] Every definable filtered collection of nonempty closed subsets of $X$ has a nonempty intersection.
	\end{enumerate}
\end{theorem}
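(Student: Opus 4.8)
The plan is to prove the three conditions equivalent by establishing the two biconditionals $(1)\Leftrightarrow(3)$ and $(2)\Leftrightarrow(3)$. The unifying observation, read straight off Definition \ref{def:limit}, is that the convergence sets are intersections of closures of tails:
\[
\convr(\gamma)=\bigcap_{a<t<b}\mycl_{\tau}(\gamma((t,b)))\quad\text{and}\quad\convl(\gamma)=\bigcap_{a<t<b}\mycl_{\tau}(\gamma((a,t))).
\]
Indeed, $x$ lies in $\convr(\gamma)$ exactly when every definable neighborhood of $x$ meets every right tail, which is the same as $x\in\mycl_{\tau}(\gamma((t,b)))$ for all $t$. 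This identity makes $(3)\Rightarrow(1)$ immediate: for a definable curve $\gamma$ the family $\{\mycl_{\tau}(\gamma((t,b)))\mid a<t<b\}$ is a definable collection of nonempty closed sets, and it is filtered because the tails decrease as $t$ increases (for $t_1,t_2$ take $t_3=\max\{t_1,t_2\}$); so $(3)$ forces $\convr(\gamma)=\bigcap_t\mycl_{\tau}(\gamma((t,b)))\neq\emptyset$, and symmetrically for $\convl$.

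For the net/filter equivalence $(2)\Leftrightarrow(3)$ I would proceed as follows. Given a definable filtered collection $\{S_t\mid t\in T\}$ of nonempty closed sets, turn $T$ into a definable directed set by $t\preceq t'\iff S_t\subseteq S_{t'}$ (directedness is exactly the filtered condition) and choose, via Lemma \ref{lem:definable_choice}, a definable selection $g\colon T\to X$ with $g(t)\in S_t$; this is a definable net. Assuming $(2)$, a convergent definable subnet $g\circ h$ has a limit $x$, and if $x\notin S_{t_0}$ then $X\setminus S_{t_0}$ would be a neighborhood into which the subnet eventually falls, contradicting that downward cofinality of $h$ forces $g(h(w))\in S_{h(w)}\subseteq S_{t_0}$ cofinally; hence $x\in\bigcap_t S_t$, giving $(2)\Rightarrow(3)$. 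For $(3)\Rightarrow(2)$, given a definable net $\gamma\colon(\Omega,\preceq)\to X$, apply $(3)$ to the definable filtered collection of tail closures $C_u=\mycl_{\tau}(\gamma(\{v\mid v\preceq u\}))$ to obtain a cluster point $x\in\bigcap_u C_u$; then, using a definable neighborhood basis $\{A_{(s,t)}\}$ of $x$ from Theorem \ref{thm:countability}(1) (or the directed set of Example \ref{ex:basis}), build the index set $\Omega\times M^{>0}\times M^{>0}$ and a definable $f(u,s,t)\preceq u$ with $\gamma(f(u,s,t))\in A_{(s,t)}$ by Lemma \ref{lem:definable_choice}. One checks $f$ is downward cofinal and that $\gamma\circ f$ converges to $x$, producing the required convergent definable subnet.

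The crux is $(1)\Rightarrow(3)$, and I expect the two-parameter bookkeeping to be the main obstacle: a single definable curve cannot in general be cofinal in $(M^{>0}\times M^{>0},\unlhd)$ (in a pure ordered group there is no definable curve decreasing to $0$), so one genuinely needs the two directions supplied by Theorem \ref{thm:downward}. My plan is to take the directed set $(T,\preceq)$ above, a definable downward cofinal map $\gamma_0\colon(M^{>0}\times M^{>0},\unlhd)\to(T,\preceq)$ from Theorem \ref{thm:downward}, and a definable selection $F(s,t)\in S_{\gamma_0(s,t)}$, and then handle the two parameters by \emph{nested} curve limits. For each fixed $s$, apply $(1)$ to the curve $t\mapsto F(s,t)$ on $(0,\infty)$ and choose (Lemma \ref{lem:definable_choice}) a definable $y(s)\in\convr(F(s,\cdot))$; then apply $(1)$ to the curve $s\mapsto y(s)$ on $(0,\infty)$ and pick $x\in\convl(y)$.

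To see $x\in\bigcap_{t_0\in T}S_{t_0}$, fix $t_0$ and use downward cofinality of $\gamma_0$ to get $(s_1,t_1)$ with $\gamma_0(s,t)\preceq t_0$, hence $F(s,t)\in S_{\gamma_0(s,t)}\subseteq S_{t_0}$, whenever $s\le s_1$ and $t\ge t_1$. Closedness of $S_{t_0}$ now upgrades the cluster sets to genuine membership, twice: for $s\le s_1$ the displayed formula gives $y(s)\in\convr(F(s,\cdot))\subseteq\mycl_{\tau}(F(s,(t_1,\infty)))\subseteq S_{t_0}$, so $y((0,s_1))\subseteq S_{t_0}$; and then $x\in\convl(y)\subseteq\mycl_{\tau}(y((0,s_1)))\subseteq S_{t_0}$. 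Since $t_0$ was arbitrary, $\bigcap_{t\in T}S_t\neq\emptyset$. The delicate points to verify carefully are the definability of the selections $F$ and $y$ (so that $(1)$ and Lemma \ref{lem:definable_choice} apply fiberwise), and the exact matching of the order $\unlhd$ (first coordinate $\le$, second coordinate $\ge$) with the left limit in $s$ and the right limit in $t$; once these are aligned, closedness of the members of the collection does all the work.
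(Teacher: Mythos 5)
Your proposal is correct and follows essentially the same route as the paper: the crucial direction $(1)\Rightarrow(3)$ uses Theorem \ref{thm:downward}, Lemma \ref{lem:definable_choice}, and the nested right-then-left curve limits with closedness of the $S_{t_0}$ doing the final work, exactly as in the paper's proof, and your $(3)\Rightarrow(2)$ subnet construction (choice function on $\Omega\times M^{>0}\times M^{>0}$) is a minor variant of the paper's construction via $\Omega'=\{(u,v)\;|\;\gamma(u)\in A_v\}$. The only organizational difference is that the paper closes the cycle $(1)\Rightarrow(3)\Rightarrow(2)\Rightarrow(1)$ whereas you prove the two biconditionals $(1)\Leftrightarrow(3)$ and $(2)\Leftrightarrow(3)$; the extra implications you handle, via the tail-closure identity $\convr(\gamma)=\bigcap_{a<t<b}\mycl_{\tau}(\gamma((t,b)))$, are routine, so nothing essential changes.
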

\begin{proof}
	We first demonstrate that the condition (2) implies the condition (1).
	We only prove that $\convl(\gamma)$ is not empty.
	We can show that $\convr(\gamma) \neq \emptyset$ similarly.
	Since the order $\leq$ on the interval $(a,b)$  is a linear order, the pair $((a,b),\leq)$ is a definable directed set by Lemma \ref{lem:obv1}.
	Therefore, the definable curve $\gamma:(a,b) \to X$ is a definable net.
	By the condition (2), there exists a definable downward cofinal map $f:(\Omega, \preceq) \to ((a,b),\leq)$ such that the subnet $\gamma'=\gamma \circ f$ converges to a point $x \in X$.
	We fix an arbitrary definable neighborhood $A$ of the point $x$ and $t \in M$ satisfying $a<t<b$.
	There exists $u_A \in \Omega$ such that $\gamma(f(u)) \in A$ whenever $u \preceq u_A$ by the definition of net convergence.
	Since $f$ is downward cofinal, there exists $u_t \in \Omega$ such that $f(v) \leq t/2$ whenever $v \preceq u_t$.
	Take $u \in \Omega$ with $u \preceq u_A$ and $u \preceq u_t$.
	We have $\gamma(f(u)) \in A$ and $f(u)<t$.
	Set $z=f(u)$.
	We get $a<z<t$ and $\gamma(z) \in A$.
	It implies that $x \in \convl(\gamma)$.
	
	We show that the condition (3) implies the condition (2).
	The proof is almost the same as that of \cite[Corollary 44]{GTW}.
	Let $\gamma:(\Omega,\preceq) \to (X,\tau)$ be a definable net.
	Set $C_u=\gamma(\{u' \in \Omega\;|\; u' \preceq u\})$.
	The definable family $\mathcal C=\{\mycl_{\tau}(C_u)\;|\; u \in \Omega\}$ is a definable directed set of closed subsets of $X$.
	We can take a point $x \in \bigcap_{C \in \mathcal C}C$ by the condition (3).
	Take a definable basis of neighborhoods $\mathcal A=\{A_v\;|\; v \in \Sigma\}$ of $x$.
	Let $(\Sigma,\preceq_{\mathcal A})$ be the directed set defined by $v \preceq_{\mathcal A} v' \Leftrightarrow A_v \subseteq A_{v'}$.
	Since $x \in  \bigcap_{C \in \mathcal C}C$, the following sentence holds true:
	\begin{equation}
	\forall u \in \Omega,\ \forall v \in \Sigma, \exists u' \in \Omega,\  (u' \preceq u) \wedge (\gamma(u') \in A_v). \label{eq:++}
	\end{equation}
	
	Set $\Omega'=\{(u,v) \in \Omega \times \Sigma\;|\; \gamma(u) \in A_v\}$.
	The sentence (\ref{eq:++}) implies that, for any $v \in \Sigma$, there exists $u \in \Omega$ such that $(u,v) \in \Omega'$.
	We next define the preorder $\preceq'$ on $\Omega'$.
	We say that $(u',v') \preceq' (u,v)$ if and only if $u' \preceq u$ and $v' \preceq_{\mathcal A} v$.
	We want to show that $(\Omega',\preceq')$ is a definable directed set.
	The definability of $(\Omega',\preceq')$ is obvious.
	Fix two points $(u,v), (u',v') \in \Omega'$.
	We can take $w \in \Omega$ with $w \preceq u$ and $w \preceq u'$.
	We can also take $v'' \in \Sigma$ with $v'' \preceq_{\mathcal A} v$ and $v'' \preceq_{\mathcal A} v'$.
	We can take $u'' \in \Omega$ such that $u''\preceq w$ and $\gamma(u'') \in A_{v''}$ by the sentence (\ref{eq:++}).
	It implies that $(u'',v'') \in \Omega'$.
	The conditions $(u'',v'') \preceq' (u,v)$ and $(u'',v'') \preceq' (u',v')$ are also satisfied.
	We have demonstrated that $(\Omega',\preceq')$ is a definable directed set.
	
	Let $\pi:\Omega' \to \Omega$ be the natural projection.
	We show that it is downward cofinal.
	 Fix an arbitrary $u \in \Omega$.
	 We can take $(u',v') \in \Omega'$ with $u' \preceq u$ by the sentence (\ref{eq:++}).
	 We have $u''=\pi(u'',v'') \preceq u' \preceq u$ whenever $(u'',v'') \preceq' (u',v')$.
	 It means that $\pi$ is downward cofinal.
	 Consider the subset $\gamma'=\gamma \circ \pi$ of $\gamma$.
	 We demonstrate that $\gamma'$ converges to $x$.
	 In fact, for any $v \in \Sigma$, there exists $u \in \Omega$ such that $\gamma(u) \in A_v$ by the sentence (\ref{eq:++}).
	 Whenever $(u',v') \preceq' (u,v)$, we have $\gamma'(u',v') = \gamma(u') \in A_{v'} \subseteq A_v$.
	 It implies that $\gamma'$ converges to the point $x$.
	 
	The final task is to demonstrate $(1) \Rightarrow (3)$.
	Fix a definable filtered collection of nonempty closed sets $\{C_u\;|\; u \in \Omega\}$.
	Let $(\Omega, \preceq_{\mathcal C})$ be the definable directed set defined by $u \preceq_{\mathcal C} v \Leftrightarrow C_u \subseteq C_v$.
	By Theorem \ref{thm:downward}, there exists a definable downward cofinal map $\gamma:(M^{>0} \times M^{>0},\unlhd) \to (\Omega,\preceq_{\mathcal C})$.
	Lemma \ref{lem:definable_choice} asserts that there exists a definable map $\mu:M^{>0} \times M^{>0} \to X$ such that $\mu(s,t) \in C_{\gamma(s,t)}$ for any $(s,t) \in M^{>0} \times M^{>0}$.
	 Let $\mu_s:M^{>0} \to X$ be the curve defined by $\mu_s(t)=\mu(s,t)$ for any $s >0$.
	 By the condition (1), we have $\convr(\gamma_s) \neq \emptyset$.
	 Using Lemma \ref{lem:definable_choice} once again, we can take a definable map $\kappa:M^{>0} \to X$ such that $\kappa(s) \in \convr(\gamma_s)$ for any $s>0$.
	 Since $\kappa$ is also a curve, there exists $x^* \in \convl(\kappa)$ by the condition (1).
	 
	 We want to show that $x^* \in \bigcap_{u \in \Omega}C_u$.
	 Fix an arbitrary point $u \in \Omega$.
	 Since $\gamma$ is downward cofinal, there exists $(s_u,t_u) \in M^{>0} \times M^{>0}$ such that $\mu(s,t) \in C_u$ whenever $(s,t) \unlhd (s_u,t_u)$.
	 Fix $s \in M^{>0}$ with $0<s<s_u$ for a while.
	 By the definition of $\kappa$, for any $t>t_u$ and any definable neighborhood $A$ of $\kappa(s)$, we have $\mu_s((t,\infty)) \cap A \neq \emptyset$.
	 In particular, we get $C_u \cap A \neq \emptyset$.
	 Since $C_u$ is closed, we have $\kappa(s) \in C_u$.
	 Similarly to the previous argument, we have $\kappa((0,s)) \cap A \neq \emptyset$ for any $0<s<s_u$ and any definable neighborhood $A$ of $x^*$.
	 The intersection $C_u \cap A$ is not empty, and we get $x^* \in C_u$ because $C_u$ is closed.
	 We have demonstrated that $x^* \in \bigcap_{u \in \Omega}C_u$. 
\end{proof}

We employ the definition of definable compactness in \cite{J}. 
\begin{definition}
	Consider an expansion of a dense linear order and a definable topological space $(X,\tau)$.
	It is \textit{definably compact} if the condition (3) of Theorem \ref{thm:compact} is satisfied.
	A definable subset of $X$ is \textit{definably compact} if the set equipped with the relative topology is a definably compact definable topological space.
	The definable topological space $(X,\tau)$ is \textit{locally definably compact} if each point in $X$ has a definably compact neighborhood. 
	
	A definable continuous map $f:(X,\tau_X) \to (Y, \tau_Y)$ is \textit{definably proper} if $f^{-1}(C)$ is definably compact for any definably compact definable subset $C$ of $Y$.
	It is called \textit{definably closed} if the image $f(C)$ is closed for every definable closed subset $C$ of $X$.
\end{definition}

\begin{proposition}\label{prop:affine_case}
	Let $\mathcal M=(M,<,\ldots)$ be a definably complete expansion of a dense linear order.
	Let $X$ be a definable subset of $M^n$ and $\tau_{\text{aff}}$ be the affine topology on $X$.
	The definable topological space $(X,\tau_{\text{aff}})$ is definably compact if and only if $X$ is bounded in $M^n$ and closed in $(M^n,\tau_{M^n})$, where $\tau_{M^n}$ is the product topology of the order topology on $M$.
\end{proposition}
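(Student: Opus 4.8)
The plan is to argue entirely through the defining condition (3) of definable compactness, namely that every definable filtered collection of nonempty closed subsets of $X$ has nonempty intersection. Since the proposition assumes only definable completeness over a dense linear order, I would deliberately avoid the net/curve characterizations of Theorem \ref{thm:compact}, which are only available in the locally o-minimal ordered-group setting. The ``if'' direction will then be essentially immediate from a completeness fact, and the ``only if'' direction will be handled by contraposition, exhibiting explicit witnessing families.

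For the ``if'' direction, suppose $X$ is bounded in $M^n$ and closed in $(M^n,\tau_{M^n})$, and let $\{C_u\;|\;u\in\Omega\}$ be an arbitrary definable filtered collection of nonempty closed subsets of $X$. Because $X$ is closed in $M^n$, each $C_u$ is closed in $M^n$, and because $X$ is bounded each $C_u$ is bounded. Thus $\{C_u\;|\;u\in\Omega\}$ is a definable filtered collection of nonempty closed and bounded subsets of $M^n$, and I would invoke \cite[Remark 5.6]{FKK} (the same completeness fact already used in the proof of Lemma \ref{lem:prepre}) to conclude $\bigcap_{u\in\Omega}C_u\neq\emptyset$. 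Hence condition (3) holds and $(X,\tau_{\text{aff}})$ is definably compact.

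For the ``only if'' direction I would prove the contrapositive, splitting into failure of boundedness and failure of closedness, and in each case produce a definable filtered collection of nonempty closed subsets of $X$ with empty intersection. If $X$ is unbounded, then some coordinate projection $\pi_i:M^n\to M$ has $\pi_i(X)$ unbounded, say above; for $r\in M$ set $C_r=\{x\in X\;|\;\pi_i(x)\geq r\}$. Each $C_r$ is the intersection of $X$ with the closed set $\pi_i^{-1}([r,\infty))$, hence closed in $X$, and it is nonempty because $\pi_i(X)$ is unbounded above; the family is definable and filtered since $C_{\max\{r,r'\}}\subseteq C_r\cap C_{r'}$. As $M$ has no greatest element, $\bigcap_{r\in M}C_r=\emptyset$, contradicting definable compactness, and the case of unboundedness below is symmetric.

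Finally, suppose $X$ is not closed, so there is a point $p=(p_1,\dots,p_n)\in\mycl_{\tau_{M^n}}(X)\setminus X$. For each pair $(a,b)\in M^n\times M^n$ with $a_i<p_i<b_i$ for all $i$, set $C_{(a,b)}=X\cap\prod_{i=1}^n[a_i,b_i]$. Each $C_{(a,b)}$ is closed in $X$, and it is nonempty because the open box $\prod_{i=1}^n(a_i,b_i)$ is a neighborhood of the closure point $p$ and hence meets $X$; the family is filtered because the box obtained by taking componentwise maxima of the $a$'s and minima of the $b$'s still contains $p$, and it is definable with parameter $p$. Using that $M$ is a dense linear order without endpoints, every point other than $p$ is excluded from some such closed box, so $\bigcap C_{(a,b)}=X\cap\{p\}=\emptyset$ since $p\notin X$, once more contradicting definable compactness. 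The only delicate point throughout is checking that each exhibited family is simultaneously definable, filtered, and composed of genuinely closed nonempty subsets of $X$; there is no deep obstacle, and the entire substance of the ``if'' direction rests on the cited completeness fact \cite[Remark 5.6]{FKK}.
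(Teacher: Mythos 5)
Your proposal is correct. Bear in mind, however, that the paper's own proof of Proposition \ref{prop:affine_case} consists solely of the citation \cite[Remark 5.6]{FKK}; the entire equivalence is deferred to that reference. Your argument is therefore genuinely more self-contained: you invoke the cited remark only for the single fact that truly requires definable completeness, namely that a definable filtered collection of nonempty closed subsets of a definable closed and bounded set has nonempty intersection (exactly the form in which the paper uses it in the proof of Lemma \ref{lem:prepre}), and you establish the converse by explicit elementary constructions. Your two witnessing families are correctly checked to be definable, filtered, and composed of nonempty relatively closed sets, and the emptiness of their intersections uses only that $M$ is dense and has no endpoints; this last assumption is the paper's standing convention (the proposition's phrase ``dense linear order'' should be read as ``dense linear order without endpoints''), and you do rely on it both for the unboundedness case (no greatest element) and for the non-closedness case (density to shrink boxes around the missing limit point $p$). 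The one hypothesis-sensitive step is your appeal to \cite[Remark 5.6]{FKK} over a mere dense linear order rather than an ordered group, but since the paper itself cites that remark as the proof of this proposition in exactly this generality, your usage is consistent with the paper's. In short, what your route buys is a transparent, checkable argument in which the roles of closedness (to make relatively closed sets closed in $M^n$) and boundedness (to make them bounded) are cleanly separated, at the cost of still resting on the same external completeness fact for the ``if'' direction.
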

\begin{proof}
	See \cite[Remark 5.6]{FKK}.
\end{proof}

We give an example of a definably compact definable topological space whose underlying set is unbounded.
\begin{example}
	Let us consider a definably complete locally o-minimal structure  $\mathcal M=(M,<,\ldots)$ who has a definable subset $D$ of $M$ such that $c=\inf D \in M$, $\sup D = \infty$ and it is discrete and closed in $M$  in the order topology induced from the order $<$.
	We define a definable topology $\tau_D$ on $D$ as follows:
	Each point other than $c$ is discrete in $\tau_D$ and $\{\{c\} \cup \{x \in D\;|\; x>d\}\;|\;d \in M\}$ is a definable basis of open neighborhoods of the point $c$.
	The definable topological space $(D,\tau_D)$ is definably compact.
	
	In fact, let $\gamma:(a,b) \to D$ be a definable curve with $a \in M \cup\{-\infty\}$ and $b \in M \cup \{\infty\}$.
	We show that $\convr(\gamma)$ is not empty.
	We first consider the case in which the image $\gamma((d,b))$ is bounded for some $a<d<b$.
	Fix $a<d<b$ such that $\gamma((d,b))$ is bounded.
	Take a definable bounded subset $C$ of $D$ with $\gamma((d,b)) \subseteq C$.
	Note that $C$ is closed in $M$ in the order topology by \cite[Proposition 2.8(1)]{FKK}.
	In addition, the relative topology of the topology $\tau_D$ on $C$ coincides with the affine topology on $C$.
	The set $\convr(\gamma)$ is not empty by Proposition \ref{prop:affine_case}. 
	The remaining case is the case in which the image $\gamma((d,b))$ is unbounded for any $a<d<b$.
	It is obvious that $\gamma((d,b))$ has a nonempty intersection with any definable neighborhood of the point $c$ for any $a<d<b$ by the definition of $\tau_D$.
	It implies that $c \in \convr(\gamma)$ and $\convr(\gamma)$ is not empty.
	We can prove that $\convl(\gamma)$ is not empty in the same manner.
	We have proven that $(D,\tau_D)$ is definably compact by Theorem \ref{thm:compact}.
\end{example}

We give three properties of definable compactness which are easily proven.
\begin{lemma}\label{lem:compact_image}
	Consider a definably complete locally o-minimal expansion of an ordered group.
	Let $(X,\tau_X)$ and $(Y,\tau_Y)$ be definable topological spaces and $f : (X,\tau_X) \to (Y,\tau_Y)$ be a definable continuous map.
	If $(X,\tau_X)$ is definably compact, its image $f(X)$ is also definably compact.
\end{lemma}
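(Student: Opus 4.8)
The plan is to verify directly that $f(X)$, equipped with the relative topology inherited from $(Y,\tau_Y)$, satisfies condition (3) of Theorem \ref{thm:compact}, which is the very definition of definable compactness. First I would record the preliminary structural facts: the image $f(X)$ is a definable set, being the image of the definable set $X$ under the definable map $f$, and it carries a definable open base, namely the traces on $f(X)$ of a definable open base of $\tau_Y$. Hence $f(X)$ together with its relative topology is a genuine definable topological space, and it makes sense to ask whether it is definably compact.

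The core of the argument is a pullback of filtered collections of closed sets. Let $\{D_u\;|\;u \in \Omega\}$ be an arbitrary definable filtered collection of nonempty closed subsets of $f(X)$. I would pass to the corestriction $\bar f : (X,\tau_X) \to f(X)$, which is continuous into the relative topology, and set $C_u = f^{-1}(D_u) = \bar f^{-1}(D_u)$ for each $u \in \Omega$. Each $C_u$ is then closed in $X$ by continuity of $\bar f$; it is nonempty because $D_u \neq \emptyset$ and $D_u \subseteq f(X)$, so any $y \in D_u$ has a preimage $x \in C_u$; and the family $\{C_u\}$ is again definable and filtered, since an inclusion $D_w \subseteq D_u \cap D_v$ pulls back to $C_w \subseteq C_u \cap C_v$. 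Because $(X,\tau_X)$ is definably compact, condition (3) of Theorem \ref{thm:compact} yields a point $x \in \bigcap_{u \in \Omega} C_u$; then $f(x) \in \bigcap_{u \in \Omega} D_u$, so the intersection is nonempty and $f(X)$ is definably compact.

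The proof is almost entirely bookkeeping, and the only point that requires care is the topological reduction: a set closed in the relative topology of $f(X)$ need not be closed in $Y$, so I cannot invoke continuity of $f : X \to Y$ directly applied to $D_u$. The remedy, and the one mildly delicate step, is precisely the passage to the corestriction $\bar f : X \to f(X)$, using that it is continuous into the relative topology, whence $f^{-1}(D_u)$ is closed in $X$ even though $D_u$ is only closed in $f(X)$. I expect no other obstacle. As an alternative I could instead verify condition (1): lift a definable curve in $f(X)$ to a definable curve in $X$ by Lemma \ref{lem:definable_choice} and apply the inclusion $f(\convr(\gamma)) \subseteq \convr(f \circ \gamma)$ of Lemma \ref{lem:conv_formula}(2), but the filtered-collection route above is shorter and avoids the curve-lifting step.
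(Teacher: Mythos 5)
Your proof is correct: the paper itself omits the argument as ``straightforward,'' and your route---pulling back a definable filtered collection of nonempty closed subsets of $f(X)$ through the corestriction $\bar f : X \to f(X)$, using its continuity into the relative topology to get closedness of the preimages, and then invoking condition (3) of Theorem \ref{thm:compact} for $X$---is exactly the standard argument being alluded to. The attention you pay to the relative-topology subtlety (that $D_u$ closed in $f(X)$ need not be closed in $Y$) is the right point of care, and nothing is missing.
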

\begin{proof}
	The proof is straightforward.
	We omit the proof.
\end{proof}

\begin{lemma}\label{lem:proper}
	Consider a definably complete locally o-minimal expansion of an ordered group.
	Let $f:(X,\tau_X) \to (Y,\tau_Y)$ be a definably closed definable continuous map between two definable topological spaces.
	Assume that the fiber $f^{-1}(y)$ is definably compact for each $y \in Y$.
	Then $f$ is definably proper.
\end{lemma}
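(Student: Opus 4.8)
The plan is to show that $f^{-1}(C)$ is definably compact for an arbitrary definably compact definable subset $C$ of $Y$, using the characterization via filtered collections (condition (3) of Theorem \ref{thm:compact}). So I would start with a definable filtered collection $\{F_u \mid u \in \Omega\}$ of nonempty closed subsets of $f^{-1}(C)$ and try to produce a point in their common intersection. The natural first move is to push the collection forward by $f$: set $G_u = \mycl_{\tau_Y}(f(F_u))$, or rather use the definable closedness to control images. Since $f$ is definably closed, each $f(F_u)$ is already closed in $Y$, and because $F_u \subseteq f^{-1}(C)$ we have $f(F_u) \subseteq C$, so each $f(F_u)$ is a nonempty closed subset of the definably compact space $C$. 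I would check that $\{f(F_u) \mid u \in \Omega\}$ is again a definable filtered collection: definability is routine, and if $F_{u_3} \subseteq F_{u_1} \cap F_{u_2}$ then $f(F_{u_3}) \subseteq f(F_{u_1}) \cap f(F_{u_2})$, giving the filtered condition.

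Applying condition (3) inside the definably compact set $C$ yields a point $y_0 \in \bigcap_{u \in \Omega} f(F_u)$. The key step is then to work inside the single fiber $f^{-1}(y_0)$, which is definably compact by hypothesis. I would consider the definable family $\{F_u \cap f^{-1}(y_0) \mid u \in \Omega\}$ of subsets of the fiber. Each such set is nonempty precisely because $y_0 \in f(F_u)$, and each is closed in the relative topology of $f^{-1}(y_0)$ since $F_u$ is closed in $f^{-1}(C)$. The filtered property is inherited from $\{F_u\}$: from $F_{u_3} \subseteq F_{u_1} \cap F_{u_2}$ we get $F_{u_3} \cap f^{-1}(y_0) \subseteq (F_{u_1} \cap f^{-1}(y_0)) \cap (F_{u_2} \cap f^{-1}(y_0))$. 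Thus $\{F_u \cap f^{-1}(y_0)\}$ is a definable filtered collection of nonempty closed subsets of the definably compact space $f^{-1}(y_0)$, so by condition (3) applied to that fiber there is a point $x_0 \in \bigcap_{u \in \Omega}(F_u \cap f^{-1}(y_0)) \subseteq \bigcap_{u \in \Omega} F_u$. This exhibits the required nonempty intersection and shows $f^{-1}(C)$ is definably compact.

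The main obstacle I anticipate is the verification that $f(F_u)$ is genuinely closed in $C$ and that the relative-topology closedness statements are consistent across the two levels (the space $Y$ and the fiber). The definable closedness hypothesis is exactly what makes $f(F_u)$ closed in $Y$, hence closed in $C$, but one must be careful that $F_u$ is closed in $f^{-1}(C)$ with its relative topology rather than closed in $X$; I would need to confirm that applying condition (3) within the subspace $f^{-1}(C)$ is legitimate, i.e.\ that definable compactness of the subspace is exactly what we are trying to establish, so the filtered collection we start with lives in the subspace topology and the closed sets $F_u$ are closed in that subspace. The passage to the fiber is clean because $f^{-1}(y_0) \subseteq f^{-1}(C)$ and the relative topology of $f^{-1}(y_0)$ inside $f^{-1}(C)$ agrees with its relative topology inside $X$. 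A secondary technical point is ensuring definability of the two derived families, but this follows formally from the definability of $f$, of the original family, and of the chosen point $y_0$ (which can be taken definable, or one simply works with the fixed parameter). Granting these bookkeeping checks, the argument is a standard two-step compactness reduction: compactness of the base plus compactness of each fiber gives compactness of the total preimage.
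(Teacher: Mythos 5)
Your proposal is correct and follows essentially the same route as the paper's proof: push the filtered collection forward by $f$ (using definable closedness to keep the images closed in the definably compact set $C$), extract a point $y_0$ in the intersection of the images, and then apply the filtered-collection criterion once more inside the definably compact fiber $f^{-1}(y_0)$. The relative-topology bookkeeping you flag is handled the same way implicitly in the paper (one may replace each $F_u$ by $f^{-1}(C) \cap \mycl_{\tau_X}(F_u)$, so that $f(F_u) = C \cap f(\mycl_{\tau_X}(F_u))$ is closed in $C$), so there is no substantive difference.
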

\begin{proof}
	Let $C$ be a definably compact definable subset of $Y$.
	Let $\{C_u \;|\; u \in \Omega\}$ be a definable filtered collection of nonempty closed subsets of $f^{-1}(C)$.
	We have only to demonstrate that $\bigcap_{u \in \Omega} C_u$ is not an empty set.
	Since $f$ is definably closed, the family $\{f(C_u)\;|\; u \in \Omega\}$ is a definable filtered collection of nonempty closed subsets of $C$.
	We can take $y \in \bigcap _{u \in \Omega} f(C_u)$ because $C$ is definably compact.
	In particular, the intersection $f^{-1}(y) \cap C_u$ is not empty for each $u \in \Omega$.
	Therefore, $\{f^{-1}(y) \cap C_u \;|\; u \in \Omega\}$ is a definable filtered collection of nonempty closed subsets of $f^{-1}(y)$.
	The intersection $f^{-1}(y) \cap \bigcap_{u \in \Omega} C_u$ is not empty because $f^{-1}(y)$ is definably compact.
	It implies that $\bigcap_{u \in \Omega} C_u$ is not an empty set.
\end{proof}

\begin{lemma}\label{lem:compact_closed}
	Consider a definably complete locally o-minimal expansion of an ordered group.
	Let $(X,\tau_X)$ be a definably compact definable space and $C$ be a definable subset.
	If $C$ is closed, $C$ is definably compact.
	The converse holds true if the topological space $(X,\tau)$ is Hausdorff.
\end{lemma}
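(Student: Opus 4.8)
I would organize the argument around the two implications in the statement, treating the easy direction first. Suppose $C$ is closed in $X$ and $X$ is definably compact; I would verify condition (3) of Theorem \ref{thm:compact} for $C$ with its relative topology. Given a definable filtered collection $\{G_u\mid u\in\Omega\}$ of nonempty relatively closed subsets of $C$, each $G_u$ equals $\mycl_\tau(G_u)\cap C$, which is closed in $X$ because $C$ is closed; hence $\{G_u\mid u\in\Omega\}$ is a definable filtered collection of nonempty closed subsets of $X$. Definable compactness of $X$ then produces a point in $\bigcap_{u\in\Omega}G_u\subseteq C$, so $C$ is definably compact.

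For the converse I assume $X$ is Hausdorff and $C$ is definably compact, and I aim to prove $\mycl_\tau(C)=C$, that is, every $x\in\mycl_\tau(C)$ already lies in $C$. Fixing such an $x$, I would invoke Theorem \ref{thm:countability}(2) to obtain a definable basis of open neighborhoods $\{A'_{(s,t)}\mid s,t>0\}$ of $x$ with $A'_{(s',t')}\subseteq A'_{(s,t)}$ whenever $(s',t')\unlhd(s,t)$, and set $F_{(s,t)}=C\cap\mycl_\tau(A'_{(s,t)})$. Since $x\in\mycl_\tau(C)$, each open neighborhood $A'_{(s,t)}$ meets $C$, so $F_{(s,t)}\neq\emptyset$, and each $F_{(s,t)}$ is relatively closed in $C$. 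The monotonicity of the basis makes $\{F_{(s,t)}\mid s,t>0\}$ a definable filtered collection: given $(s_1,t_1)$ and $(s_2,t_2)$, the index $(\min\{s_1,s_2\},\max\{t_1,t_2\})$ is a common $\unlhd$-lower bound, and taking closures of the resulting inclusion of neighborhoods yields $F_{(\min\{s_1,s_2\},\max\{t_1,t_2\})}\subseteq F_{(s_1,t_1)}\cap F_{(s_2,t_2)}$.

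Definable compactness of $C$ then supplies a point $z\in\bigcap_{s,t>0}F_{(s,t)}\subseteq C$, and it remains to show $z=x$, which is exactly where Hausdorffness is used. If $z\neq x$, I would choose disjoint open sets $U\ni x$ and $V\ni z$; since $\{A'_{(s,t)}\}$ is a neighborhood basis of $x$, some $A'_{(s_0,t_0)}\subseteq U$, and then $V$ is an open neighborhood of $z$ disjoint from $A'_{(s_0,t_0)}$, forcing $z\notin\mycl_\tau(A'_{(s_0,t_0)})$ and contradicting $z\in F_{(s_0,t_0)}$. Hence $z=x$, so $x\in C$, proving $C$ closed. I expect the genuine bookkeeping to lie in the middle paragraph—checking that $\{F_{(s,t)}\}$ really is a definable filtered collection of nonempty relatively closed sets—rather than in the separation step, which is the classical ``compact subset of a Hausdorff space is closed'' argument transcribed into the closure/net language of Theorem \ref{thm:compact}.
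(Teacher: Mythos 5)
Your proof is correct and is essentially the paper's own argument: the paper likewise fixes a point $x$ in the closure of $C$, intersects $C$ with a definable filtered collection of closed neighborhoods of $x$ (closures of basic open neighborhoods), applies condition (3) of Theorem \ref{thm:compact} to get a point of the intersection, and uses Hausdorffness to conclude that this point must be $x$ itself, so $x \in C$. The only cosmetic differences are that you spell out the forward direction, which the paper omits as easy, and that you obtain your monotone neighborhood basis from Theorem \ref{thm:countability}(2) rather than working directly with an arbitrary definable basis of neighborhoods of $x$.
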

\begin{proof}
	The proof of the forward implication is easy and omitted.
	We consider the opposite implication.
	Fix a point $x$ in the closure of $C$.
	Consider a definable basis $\mathcal B_x=\{A_u\;|\; u \in \Omega_x\}$ of closed neighborhood of $x$ in $X$.
	Since $\mathcal B_x$ is a definable filtered collection of nonempty closed sets in $X$, $\mathcal C_x=\{A_u \cap C\;|\; u \in \Omega_x\}$ is also a definable filtered collection of nonempty closed sets in $C$.
	Since $C$ is definably compact, we have $\bigcap_{u \in \Omega_x} A_u \cap C \neq \emptyset$.
	On the other hand, we have $\bigcap_{u \in \Omega_x} A_u=\{x\}$ because $(X,\tau)$ is Hausdorff.
	In particular, we obtain $x \in \bigcap_{u \in \Omega_x} A_u \cap C \subseteq C$.
	It implies that $C$ is closed.
\end{proof}

\begin{remark}
	The assumption that $(X,\tau)$ is Hausdorff cannot be dropped in the converse implication of Lemma \ref{lem:compact_closed}.
	We give an example below.
	Consider a definably complete locally o-minimal expansion of an ordered group $\mathcal M=(M,<,+,0,\ldots)$.
	Take a positive element $c \in M$.
	Set $C:=[-c,c]$ and $p=2c \in M$.
	Let $\tau_C$ be the affine topology on $C$ and $\{B_\lambda \subseteq I\;|\; \lambda \in \Lambda\}$ be a definable open base of $\tau_C$.
	We define a definable topology on $X:=C \cup \{p\}$.
	We set $$B'_\lambda = \left\{\begin{array}{ll} B_\lambda \cup \{p\} & \text{ if } 0 \in B_{\lambda},\\
		B_\lambda & \text{ otherwise }
	\end{array}
  \right.
  $$
  for all $\lambda \in \Lambda$.
  The definable family $\{B'_\lambda \subseteq J\;|\; \lambda \in \Lambda\}$ defines a definable topology $\tau_C$ on $C$.
  Two points $0$ and $p$ are not separated in the topology $\tau_X$, and the definable topological space $(X,\tau_X)$ is not Hausdorff.
  The definable subset $C$ is not closed in the topological space $(X,\tau_X)$.
  On the other hand, the relative topology of $\tau_X$ on $C$ coincides with the affine topology $\tau_C$. 
  Therefore, $C$ is a definably compact subset of $X$ by Proposition \ref{prop:affine_case}.
\end{remark}

We obtain a weaker version of a curve selection lemma.
\begin{theorem}[Weak curve selection]\label{thm:curve_sel}
	Consider a definably complete locally o-minimal expansion of an ordered group.
	Let $(X,\tau)$ be a regular locally definably compact definable topological space and $C$ be a definable subset of $X$  which is closed in $(X,\tau)$.
	Let $x \in C$ which is not discrete in $C$.
	There exists a definable curve $\gamma:(0,\infty) \to C \setminus \{x\}$ such that $x \in \convl(\gamma) \cup \convr(\gamma)$.
\end{theorem}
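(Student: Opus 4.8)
The plan is to reduce to the two-parameter neighbourhood basis furnished by Theorem \ref{thm:countability} and then to extract a single curve by a two-stage argument, one stage for each of the two ``cofinal directions'' of the index set $(M^{>0}\times M^{>0},\unlhd)$. Concretely, since $(X,\tau)$ is regular, Theorem \ref{thm:countability}(3) gives a definable basis of closed neighbourhoods $\{A_{(s,t)}\;|\;s,t>0\}$ of $x$ with $A_{(s',t')}\subseteq A_{(s,t)}$ whenever $(s',t')\unlhd(s,t)$. Using local definable compactness, $x$ has a definably compact neighbourhood $K$, and some $A_{(s_0,t_0)}\subseteq K$; after passing to the cofinal subfamily $\{A_{(s,t)}\;|\;(s,t)\unlhd(s_0,t_0)\}$ (reindexed by $M^{>0}\times M^{>0}$) I may assume every $A_{(s,t)}$ is a closed subset of $K$, hence definably compact by Lemma \ref{lem:compact_closed}. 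Because $x$ is not discrete in $C$, each $A_{(s,t)}$, being a neighbourhood of $x$, meets $C\setminus\{x\}$, so the definable choice lemma (Lemma \ref{lem:definable_choice}) yields a definable map $g\colon M^{>0}\times M^{>0}\to C\setminus\{x\}$ with $g(s,t)\in A_{(s,t)}$ for all $s,t>0$.

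Next, for each fixed $s>0$ I would consider the definable curve $\gamma_s\colon(0,\infty)\to C\setminus\{x\}$, $\gamma_s(t)=g(s,t)$. Fixing any $c\in M^{>0}$, the nesting property gives $\gamma_s((c,\infty))\subseteq A_{(s,c)}$, so the tail of $\gamma_s$ runs into the definably compact set $A_{(s,c)}\cap C$ (a closed subset of the definably compact $A_{(s,c)}$). Since a definably compact space satisfies condition (1) of Theorem \ref{thm:compact}, I get $\convr(\gamma_s)\neq\emptyset$. Moreover $\gamma_s((t,\infty))\subseteq A_{(s,t)}$ for every $t>0$, and as $A_{(s,t)}$ is closed this forces $\convr(\gamma_s)\subseteq A_{(s,t)}$; letting $t$ vary, $\convr(\gamma_s)\subseteq C\cap\bigcap_{t>0}A_{(s,t)}$. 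All of this is uniform in $s$: replacing ``for every neighbourhood'' by ``for every basic neighbourhood'' from a fixed definable open base makes $\convr(\gamma_s)$ first-order definable in $s$, so the relevant sets and choices below are definable.

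Finally I would split on the definable set $S^{*}=\{s>0\;|\;x\in\convr(\gamma_s)\}$. If $S^{*}\neq\emptyset$, pick $s_0\in S^{*}$; then $\gamma_{s_0}\colon(0,\infty)\to C\setminus\{x\}$ already satisfies $x\in\convr(\gamma_{s_0})$, and we are done. If $S^{*}=\emptyset$, then $\convr(\gamma_s)\subseteq C\setminus\{x\}$ for every $s$, so Lemma \ref{lem:definable_choice} provides a definable curve $\kappa\colon(0,\infty)\to C\setminus\{x\}$ with $\kappa(s)\in\convr(\gamma_s)$. By the previous paragraph $\kappa(s)\in A_{(s,t)}$ for all $t>0$. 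Given any neighbourhood $A$ of $x$, the basis property yields $(s_A,t_A)$ with $A_{(s',t')}\subseteq A$ whenever $(s',t')\unlhd(s_A,t_A)$; hence for $0<s\le s_A$ we have $(s,t_A)\unlhd(s_A,t_A)$, and so $\kappa(s)\in A_{(s,t_A)}\subseteq A$. Thus $\kappa(s)$ enters every neighbourhood of $x$ as $s\to0^{+}$, which is exactly $x\in\convl(\kappa)$, completing this case.

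The crux of the argument — and the reason the conclusion permits $\convr(\gamma)$ \emph{or} $\convl(\gamma)$ — is that the index $(M^{>0}\times M^{>0},\unlhd)$ has two independent cofinal directions, namely $t\to\infty$ and $s\to0^{+}$, and in a mere ordered group there is no definable function $(0,\infty)\to(0,\infty)$ tending to $0$, so one cannot collapse both directions into a single curve-parameter running to one endpoint. I expect this to be the main obstacle. The two-stage device — absorb $t\to\infty$ into $\convr(\gamma_s)$, then let $s\to0^{+}$ through $\convl(\kappa)$ — is what circumvents it, and the case analysis on $S^{*}$ is precisely what keeps the produced curve off $x$: when some $\gamma_{s_0}$ already limits onto $x$ we use it directly, and otherwise every choice $\kappa(s)\in\convr(\gamma_s)$ automatically differs from $x$.
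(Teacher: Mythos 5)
Your proof is correct and follows essentially the same route as the paper's own proof: the nested basis of closed neighbourhoods from Theorem \ref{thm:countability}(3), definable choice of points $g(s,t) \in A_{(s,t)} \cap C \setminus \{x\}$, the observation that $\convr(\gamma_s) \neq \emptyset$ and $\convr(\gamma_s) \subseteq C \cap A_{(s,t)}$ for all $t$, the case split on whether $x \in \convr(\gamma_s)$ for some $s$, and otherwise a second definable choice $\kappa(s) \in \convr(\gamma_s)$ yielding $x \in \convl(\kappa)$. The only cosmetic difference is the compactness reduction: the paper replaces $(X,C)$ by $(X',X'\cap C)$ for a definably compact neighbourhood $X'$ of $x$, whereas you keep $X$ and instead shrink the basis into such a neighbourhood, applying Lemma \ref{lem:compact_closed} to each $A_{(s,t)}$.
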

\begin{proof}
	Let $\mathcal M=(M,<,+,0,\ldots)$ be the given definably complete locally o-minimal expansion of an ordered group.
	Take a definably compact definable neighborhood $X'$ of $x$.
	We may assume that $(X,\tau_X)$ is definably compact by considering $X'$ and $X' \cap C$ instead of $X$ and $C$, respectively.
	
	Let $\mathcal B_x=\{A_u\;|\; u \in \Omega_x\}$ be a definable basis of neighborhoods of the point $x$.
	There exists a definable basis of closed neighborhoods of $x$ of the form $\{A_{(s,t)}\;|\;s,t>0\} $ such that $A_{(s',t')} \subseteq A_{(s,t)}$ whenever $(s',t') \unlhd (s,t)$ by Theorem \ref{thm:countability}(3).
	
	Applying Lemma \ref{lem:definable_choice}, we can construct a definable map $\eta:M^{>0} \times M^{>0} \to C \setminus \{x\}$ such that $\eta(s,t) \in A_{(s,t)} \cap C$.
	For any $s>0$, the curve $\gamma_s:(0,\infty) \to C \setminus \{x\}$ is defined by $\gamma_s(t)=\eta(s,t)$.
	Note that the set $\convr(\gamma_s)$ is not empty for each $s>0$ by Theorem \ref{thm:compact} because $(X,\tau)$ is definably compact.
	We consider two separate cases.
	The first case is the case in which $x \in \convr(\gamma_s)$ for some $s>0$.
	We have only to set $\gamma=\gamma_s$ in this case.
	
	The remaining case is the case in which $x \not\in \convr(\gamma_s)$ for all $s>0$.
	Since $C$ is closed, we have $\convr(\gamma_s) \subseteq C \setminus \{x\}$.
	In addition, for any $s>0$ and $t>0$, we get $\gamma_s((t,\infty)) \subseteq \bigcup_{t'>t}A_{(s,t')} \subseteq A_{(s,t)}$ because $A_{(s,t')} \subseteq A_{(s,t)}$ whenever $t'>t$.
	We obtain $\convr(\gamma_s) \subseteq \mycl(\gamma_s((t,\infty))) \subseteq A_{(s,t)}$ because $A_{(s,t)}$ is closed.
	We apply Lemma \ref{lem:definable_choice} once again.
	There exists a definable curve $\gamma:(0,\infty) \to C \setminus \{x\}$ such that $\gamma(s) \in \convr(\gamma_s)$.
	In particular, we have $\gamma(s) \in A_{(s,t)}$ for all $s>0$ and $t>0$.
	We want to show that $x \in \convl(\gamma)$.
	By the definition of $\convl(\gamma)$, we have only to demonstrate $\gamma((0,s)) \cap A_u \neq \emptyset$ for any $s>0$ and any definable neighborhood $A_u \in \mathcal B_x$ of $x$.
	Fix $s>0$ and $A_u \in \mathcal B_x$.
	We can take $(s',t) \in M^{>0} \times M^{>0}$ such that $A_{(s',t)} \subseteq A_u$.
	Set $s''=\min\{s/2,s'\}$.
	We have $\gamma(s'') \in A_{(s'',t)} \subseteq A_{(s',t)}\subseteq A_u$.
	In particular, the intersection $\gamma((0,s)) \cap A_u$ is not empty because it contains the point $\gamma(s'')$.
\end{proof}

Theorem \ref{thm:curve_sel} asserts that we can choose a definable curve when the definable set $C$ is closed.
We consider the case in which we can choose a definable curve even when $C$ is not closed.
\begin{definition}
	Consider an expansion of a dense linear order without endpoints $\mathcal M =(M,<,\ldots)$.
	A definable topological space $(X,\tau)$ has \textit{definable curve selection property} if, for any definable subset $C$ and any point $x \in \partial_{\tau}C$, there exists a definable curve $\gamma:(a,b) \to C$ such that  $a \in M$ and $x \in \convl(\gamma)$.
\end{definition}

We give two sufficient conditions for a definable topological space  to have definable curve selection property.

\begin{proposition}\label{prop:curve_selection0}
	Consider a definably complete locally o-minimal expansion of an ordered group.
	Any definable set $X$ together with the affine topology has definable curve selection property.
\end{proposition}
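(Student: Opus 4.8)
The statement to prove (Proposition~\ref{prop:curve_selection0}) asserts that $(X, \tau_{\text{aff}})$ has the definable curve selection property for any definable $X \subseteq M^n$.  So I must start with a definable subset $C \subseteq X$ and a point $x \in \partial_{\tau_{\text{aff}}} C$, i.e.\ a frontier point of $C$ in the affine topology, and produce a definable curve $\gamma : (a,b) \to C$ with $a \in M$ and $x \in \convl(\gamma)$.  Since the affine topology is the restriction of the product order topology on $M^n$, the frontier point $x$ is a genuine affine limit of points of $C$: every box $\mathcal{B}_n(x, \varepsilon)$ meets $C$, yet $x \notin C$ (or at least $x$ is in the affine closure but not interior relative to $C$; what matters is that $C$ accumulates at $x$).

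\textbf{The plan.}  The idea is to use definable completeness together with the definable choice lemma (Lemma~\ref{lem:definable_choice}) to select, for each small radius, a point of $C$ near $x$, and then organize these choices into a single definable curve parameterized by the radius.  Concretely, I first observe that $H := \{ \varepsilon \in M^{>0} \mid \mathcal{B}_n(x,\varepsilon) \cap C \neq \emptyset \}$ is a definable set containing all sufficiently small positive elements, since $x \in \partial_{\tau_{\text{aff}}} C$ means $C$ meets every affine neighborhood of $x$.  For each such $\varepsilon$ the fiber $\{ y \in C \mid \|y - x\| < \varepsilon \}$ is nonempty, so applying the definable choice lemma to the projection forgetting the $C$-coordinate yields a definable map $\varphi : (0, c) \to C$ (for some $c > 0$) with $\|\varphi(t) - x\| < t$ for all $t \in (0,c)$.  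Setting $\gamma = \varphi$ on $(0,c)$ gives a definable curve into $C$ whose values approach $x$ as the parameter shrinks to $0$.

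\textbf{Verifying convergence.}  The remaining task is to check $x \in \convl(\gamma)$, using the definition from Definition~\ref{def:limit}: for every $t$ with $0 < t < c$ and every definable affine neighborhood $A$ of $x$, I need $\gamma((0,t)) \cap A \neq \emptyset$.  Given such an $A$, it contains some box $\mathcal{B}_n(x, \delta)$.  Choosing any $s$ with $0 < s < \min\{t, \delta\}$, the point $\gamma(s)$ satisfies $\|\gamma(s) - x\| < s < \delta$, so $\gamma(s) \in \mathcal{B}_n(x,\delta) \cap A$ and also $s < t$, giving $\gamma(s) \in \gamma((0,t)) \cap A$.  This verifies left-convergence.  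Finally I take $a = 0 \in M$ and $b = c$, which meets the requirement $a \in M$.

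\textbf{Where the difficulty lies.}  This proof is essentially routine precisely because the affine topology is so well-behaved: affine neighborhoods contain boxes of the form $\mathcal{B}_n(x,\varepsilon)$, and the norm $\|\cdot\|$ provides a definable scalar gauge of proximity indexed by $M^{>0}$, which is exactly what lets me turn a frontier condition into a definable family of nonempty choice fibers.  The only points requiring genuine care are (i) confirming that $H$ contains an initial segment $(0,c)$, which follows from the frontier hypothesis rather than from $\sup H$ or $\inf H$ considerations, and (ii) ensuring the choice map $\varphi$ is genuinely definable, which is precisely the content of Lemma~\ref{lem:definable_choice} applied to the definable set $\{(t, y) \mid t \in (0,c),\ y \in C,\ \|y - x\| < t\}$ with the coordinate projection onto $t$.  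No local o-minimality or definable-completeness machinery beyond definable choice is actually needed here; the substantive obstacles are deferred to the non-affine topologies treated in the subsequent proposition.
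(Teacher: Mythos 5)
Your proof is correct, but it takes a different route from the paper: the paper gives no argument at all for this proposition, simply citing \cite[Corollary 2.9]{Fuji7}, whereas you give a self-contained construction. Your argument --- apply Lemma \ref{lem:definable_choice} to the definable set $\{(t,y)\in M^{>0}\times C\;|\;\|y-x\|<t\}$, whose projection onto the first coordinate is all of $M^{>0}$ because $x\in\mycl(C)$, obtaining a definable (not necessarily continuous) curve $\gamma$ with $\gamma(t)\in C$ and $\|\gamma(t)-x\|<t$, then verifying $x\in\convl(\gamma)$ against box neighborhoods --- is complete, and it works precisely because the paper's notion of definable curve does not require continuity and $\convl$ only asks that every tail $\gamma((a,t))$ meet every definable neighborhood of $x$. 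You are also right that definable completeness and local o-minimality enter only through Lemma \ref{lem:definable_choice} itself. Two minor points: a definable neighborhood $A$ of $x$ in the affine topology on $X$ contains a set of the form $\mathcal B_n(x,\delta)\cap X$ rather than a full box $\mathcal B_n(x,\delta)$, but since $\gamma(s)\in C\subseteq X$ your verification is unaffected; and your set $H$ is in fact all of $M^{>0}$, not merely an initial segment, since $x$ lies in the affine closure of $C$. What your approach buys is a short, self-contained proof of exactly the weak (non-continuous) statement the paper needs; what the paper's citation buys is brevity, and access to the external curve selection result of \cite{Fuji7}, which is proved with heavier machinery and would also cover formulations requiring more than a bare choice function.
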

\begin{proof}
	It is \cite[Corollary 2.9]{Fuji7}.
\end{proof}

\begin{proposition}\label{prop:curve_selection}
	Consider a definably complete locally o-minimal expansion of an ordered field.
	A definable topological space has definable curve selection property.
\end{proposition}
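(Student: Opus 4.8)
The plan is to reduce the two-parameter neighborhood basis supplied by Theorem \ref{thm:countability} to a single definable curve, exploiting the reciprocal map that is available in an ordered field but not in a mere ordered group.

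First I would record that $x \in \partial_{\tau}C \subseteq \mycl_{\tau}(C)$, so every $\tau$-neighborhood of $x$ meets $C$. Applying Theorem \ref{thm:countability}(1) to the point $x$ yields a definable basis of neighborhoods $\{A_{(s,t)} \mid s,t>0\}$ with the property that, for every neighborhood $A$ of $x$, there is $(s_A,t_A)$ with $A_{(s,t)} \subseteq A$ whenever $(s,t) \unlhd (s_A,t_A)$. Since each $A_{(s,t)}$ is a neighborhood of $x$ and $x \in \mycl_{\tau}(C)$, the intersection $A_{(s,t)} \cap C$ is nonempty for all $s,t>0$. The definable set $\{(s,t,y) \mid s,t>0,\ y \in A_{(s,t)} \cap C\}$ therefore projects onto $M^{>0} \times M^{>0}$, so Lemma \ref{lem:definable_choice} produces a definable map $\eta : M^{>0} \times M^{>0} \to C$ with $\eta(s,t) \in A_{(s,t)} \cap C$ for all $s,t>0$.

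The heart of the argument, and the only place the field structure enters, is the collapse of the two parameters into one. Because $\mathcal M$ expands an ordered field, the map $s \mapsto 1/s$ is a definable order-reversing bijection of $M^{>0}$ sending $0^{+}$ to $\infty$. I would set $\gamma(s) = \eta(s, 1/s)$ for $s \in (0,\infty)$; this is a definable curve into $C$ with finite left endpoint $a=0 \in M$. As $s \to 0^{+}$ the pair $(s,1/s)$ decreases in $\unlhd$ (the first coordinate shrinks while the second grows), which is exactly the direction along which the $A_{(s,t)}$ form a neighborhood basis of $x$.

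It then remains to verify $x \in \convl(\gamma)$. Fix $t_0>0$ and a definable neighborhood $A$ of $x$ with witness $(s_A,t_A)$. For every $s$ with $0<s<\min\{s_A,\,1/t_A,\,t_0\}$ one has $s \le s_A$ and $1/s \ge t_A$, that is $(s,1/s) \unlhd (s_A,t_A)$, whence $\gamma(s) = \eta(s,1/s) \in A_{(s,1/s)} \cap C \subseteq A$; since such $s$ lie in $(0,t_0)$, the set $\gamma((0,t_0)) \cap A$ is nonempty. As $t_0$ and $A$ were arbitrary, this gives $x \in \convl(\gamma)$, completing the argument. The substantive difficulty is not topological but structural: one needs a definable reparametrization carrying the left endpoint to the $\unlhd$-small end of $M^{>0}\times M^{>0}$, and it is precisely the multiplicative inverse of the field that furnishes it. This is why the field hypothesis (rather than the group hypothesis of Proposition \ref{prop:curve_selection0}) is invoked, and why no regularity or local compactness assumption, unlike in Theorem \ref{thm:curve_sel}, is needed here.
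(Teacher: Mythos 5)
Your proof is correct and takes essentially the same approach as the paper: the paper likewise invokes Theorem \ref{thm:countability} at $x$ (part (2) rather than part (1)), collapses the two-parameter family via the diagonal $A_{(t,1/t)}$ using the field inverse, and applies Lemma \ref{lem:definable_choice} to produce the curve. Your only deviation is performing the definable choice on the full two-parameter family before diagonalizing rather than after, which is immaterial.
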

\begin{proof}
	Let $\mathcal M=(M,<,+,\cdot,0,1,\ldots)$ be a definably complete locally o-minimal expansion of an ordered field.
	Let $(X,\tau)$  be a definable topological space and $C$ be a definable subset of $X$.
	Take  $x \in \partial_{\tau}C$. 
	Let $\mathcal B_x=\{A_u\;|\; u \in \Omega_x\}$ be a definable basis of neighborhoods of the point $x$.
	There exists a definable basis of neighborhoods of $x$ of the form $\{A_{(s,t)}\;|\;s,t>0\} $ such that $A_{(s',t')} \subseteq A_{(s,t)}$ whenever $(s',t') \unlhd (s,t)$ by Theorem \ref{thm:countability}(2).
	The definable family $\mathcal B'_x = \{A'_t=A_{t,1/t}\;|\; t>0\}$ is also a definable basis of neighborhoods of $X$.

	We have $A'_t \cap C \neq \emptyset$ because $x \in \partial_{\tau}C$ and $\mathcal B'_x = \{A'_t\;|\; t>0\}$ is a definable basis of neighborhoods of $x$.
	Applying Lemma \ref{lem:definable_choice}, we can construct a definable map $\gamma:M^{>0}  \to C$ such that $\gamma(t) \in A'_t \cap C$.
	It is obvious that $x \in \convl(\gamma)$.
\end{proof}

We can generalize Proposition \ref{prop:curve_selection0} to the case in which the definable topology is metrizable.
We first define definable metric spaces.

\begin{definition}
	Consider an expansion of an ordered group $\mathcal M=(M,<,+,0,\ldots)$.
	A \textit{definable metric space} $(X,d_X)$ is the pair of a definable set $X$ and a definable function $d_X:X  \times X\to M^{ \geq0}:=\{a \in M\;|\; a \geq 0\}$ which satisfies the conditions:
	\begin{enumerate}
		\item[(1)] $d_X(x,y)=0$ if and only if $x=y$ for any $x,y \in X$;
		\item[(2)] $d_X(x,y)=d_X(y,x)$ for any $x,y \in X$;
		\item[(3)] $d_X(x,y)+d_X(y,z) \geq d_X(x,z)$ for any $x,y,z \in X$.
	\end{enumerate}
	Set $B_X(x,\varepsilon):=\{y \in X\;|\; d_X(x,y)<\varepsilon\}$ for $x \in X$ and $\varepsilon>0$.
	The family $\{B_X(x,\varepsilon)\;|\; x \in X, \varepsilon>0\}$ is a definable open base of a definable topology $\tau_X$ on $X$.
	The above definable topology $\tau_X$ induced from the definable distance function $d_X$ is considered when a definable metric space $(X,d_X)$ is given.
	
	A definable curve $\gamma:(0,\varepsilon) \to X$ is \textit{completable in} $X$ if there exists a point $p \in X$ such that, for any $0<\varepsilon'<\varepsilon$ and any definable neighborhood $U$ of $p$ in $X$, the intersection $\gamma((0,\varepsilon')) \cap U$ is not empty.
	Here, we assume that $\varepsilon \in M^{>0} \cup \{+\infty\}$.
	We write $\gamma \to p$ in this case.
	Note that the domain of definition of the curve $\gamma$ is the open interval $(0,\varepsilon)$ and its left endpoint is always zero here differently from Definition \ref{def:limit}. 
\end{definition}

The following proposition is a generalized version of Proposition \ref{prop:curve_selection0}.
\begin{proposition}\label{prop:curve_selection2}
	Consider a definably complete locally o-minimal expansion of an ordered group.
	A definable metric space has definable curve selection property.
\end{proposition}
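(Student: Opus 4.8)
The plan is to exploit the fact that a definable metric space automatically carries a definable \emph{one-parameter} basis of neighborhoods at each point, namely the family of metric balls $\{B_X(x,\varepsilon)\mid \varepsilon>0\}$. This is precisely the ingredient that was missing in the group-only setting of Proposition \ref{prop:curve_selection}, where one had to invoke the multiplicative structure of a field in order to collapse the two-parameter basis $\{A_{(s,t)}\}$ furnished by Theorem \ref{thm:countability} into a single parameter. With a metric in hand no field is needed: the balls are already indexed by one positive parameter and are definable, since $\bigcup_{\varepsilon>0}\{\varepsilon\}\times B_X(x,\varepsilon)=\{(\varepsilon,y)\mid d_X(x,y)<\varepsilon\}$ is a definable set.

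First I would fix a definable subset $C$ of $X$ and a point $x\in\partial_\tau C$. Since the frontier is contained in $\mycl_\tau(C)$, every neighborhood of $x$ meets $C$; in particular $B_X(x,\varepsilon)\cap C\neq\emptyset$ for every $\varepsilon>0$. I would also record the standard fact that $\{B_X(x,\varepsilon)\mid\varepsilon>0\}$ is a basis of neighborhoods of $x$: given any definable neighborhood $A$ of $x$, the openness of $A$ together with the fact that the balls form a definable open base yields a ball $B_X(y,\delta)\subseteq A$ with $x\in B_X(y,\delta)$, and the triangle inequality then gives $B_X(x,\delta-d_X(x,y))\subseteq A$.

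Next I would apply the definable choice lemma (Lemma \ref{lem:definable_choice}) to the definable set $\{(\varepsilon,y)\in M^{>0}\times C\mid y\in B_X(x,\varepsilon)\}$, whose projection onto the first coordinate is all of $M^{>0}$. This produces a definable curve $\gamma:(0,\infty)\to C$ with $\gamma(\varepsilon)\in B_X(x,\varepsilon)\cap C$ for every $\varepsilon>0$; its domain has left endpoint $0\in M$, as required by the definition of the curve selection property.

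Finally I would verify $x\in\convl(\gamma)$. Fix $t>0$ and a definable neighborhood $A$ of $x$, and by the previous paragraph choose $\delta>0$ with $B_X(x,\delta)\subseteq A$. Using only that the order is dense and without endpoints, pick any $\varepsilon$ with $0<\varepsilon<\min\{t,\delta\}$; this is the single place where one must be slightly careful, since in a bare ordered group one cannot simply halve $t$, so the density of the order replaces ``divide by two''. Then $\gamma(\varepsilon)\in B_X(x,\varepsilon)\subseteq B_X(x,\delta)\subseteq A$ by nestedness of the balls, while $\varepsilon\in(0,t)$, so $\gamma((0,t))\cap A\neq\emptyset$. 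Hence $x\in\convl(\gamma)$, completing the argument. I do not expect any serious obstacle: the entire difficulty of the field case dissolves once the metric supplies the one-parameter neighborhood basis.
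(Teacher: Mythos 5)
Your proof is correct and takes essentially the same route as the paper: the paper likewise applies Lemma \ref{lem:definable_choice} to the family of balls to produce a definable curve $\gamma$ with $\gamma(t)\in C\cap B_X(x,t)$ and then notes $x\in\convl(\gamma)$. Your write-up merely spells out the neighborhood-basis fact and the final verification that the paper dismisses as obvious.
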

\begin{proof}
	Let $(X,d_x)$ be a definable metric space and $C$ be a definable subset of $X$. 
	Let $x \in \partial_\tau(C)$.
	There exists a definable curve $\gamma:(0,\delta) \to X$ such that $\gamma(t) \in C \cap B_X(x,t)$ by Lemma \ref{lem:definable_choice}.
	It is obvious that $x \in \convl(\gamma)$.
\end{proof}

We give several technical lemmas on definable metric spaces used in the next section.
\begin{lemma}\label{lem:unique_lim}
	Consider a definably complete locally o-minimal expansion of an ordered group $\mathcal M=(M,<,+,0,\ldots)$.
	Let $(X,d_X)$ be a definable metric space and $\gamma:(0,\varepsilon) \to X$ be a definable curve.
	The set $\convl(\gamma)$ is either empty or a singleton. 
	Furthermore, if $\convl(\gamma)$ is a singleton $\{p\}$, we have $\gamma \to p$.
\end{lemma}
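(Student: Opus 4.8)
The plan is to treat the two assertions separately, with the bulk of the work going into showing that $\convl(\gamma)$ contains at most one point. Suppose for contradiction that two distinct points $p$ and $q$ both lie in $\convl(\gamma)$, and set $r = d_X(p,q) > 0$. The open balls $B_X(p,r/2)$ and $B_X(q,r/2)$ are disjoint definable neighborhoods, since any common point $z$ would force $r = d_X(p,q) \le d_X(p,z) + d_X(z,q) < r$ by the triangle inequality. I would then introduce the two definable one-variable functions $\phi, \psi : (0,\varepsilon) \to M$ given by $\phi(s) = d_X(\gamma(s),p)$ and $\psi(s) = d_X(\gamma(s),q)$, which are definable because $d_X$ and $\gamma$ are. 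Applying the defining condition of $\convl(\gamma)$ to the particular neighborhoods $B_X(p,r/2)$ and $B_X(q,r/2)$ shows that the definable sets $U = \{ s \in (0,\varepsilon) \mid \phi(s) < r/2 \}$ and $V = \{ s \in (0,\varepsilon) \mid \psi(s) < r/2 \}$ both meet every interval $(0,t)$ with $0 < t < \varepsilon$; that is, each accumulates at $0$ from the right.

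The key step is to upgrade this accumulation to the containment of an entire right-neighborhood of $0$, and here I would invoke local o-minimality. For the definable set $U \subseteq M$ and the point $0 \in M$, local o-minimality provides an interval around $0$ on which $U$ is a finite union of points and open intervals; since only finitely many endpoints occur, there is $\delta_1 > 0$ such that $(0,\delta_1)$ is either contained in $U$ or disjoint from $U$. As $U$ meets $(0,\delta_1)$, the disjoint alternative is impossible, so $(0,\delta_1) \subseteq U$. The same reasoning yields $\delta_2 > 0$ with $(0,\delta_2) \subseteq V$. Choosing any $s$ with $0 < s < \min\{\delta_1,\delta_2\}$ gives $\phi(s) < r/2$ and $\psi(s) < r/2$ simultaneously, whence $r = d_X(p,q) \le \phi(s) + \psi(s) < r$, a contradiction. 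This proves that $\convl(\gamma)$ is empty or a singleton.

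For the final assertion I expect no genuine work. Unwinding the definitions, the statement $\gamma \to p$ says exactly that for every $0 < \varepsilon' < \varepsilon$ and every definable neighborhood $U$ of $p$ the intersection $\gamma((0,\varepsilon')) \cap U$ is nonempty, which is verbatim the condition defining membership $p \in \convl(\gamma)$. Hence if $\convl(\gamma) = \{p\}$, then $p \in \convl(\gamma)$, and therefore $\gamma \to p$. The only real obstacle is the uniqueness part, and within it the one nontrivial ingredient is the local o-minimality argument that turns the property that $U$ meets every initial segment $(0,t)$ into the stronger property that $U$ contains some interval $(0,\delta)$; everything else is the triangle inequality and routine bookkeeping.
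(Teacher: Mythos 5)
Your proof is correct and takes essentially the same approach as the paper: assume two distinct limit points, separate them by disjoint balls via the triangle inequality, and use local o-minimality at $0$ to upgrade ``$U$ accumulates at $0$'' to ``$U$ contains an initial interval $(0,\delta)$,'' yielding a contradiction; the only (immaterial) difference is that the paper applies this dichotomy once, to the preimage of a single ball, and then contradicts $q \in \convl(\gamma)$ directly, while you apply it symmetrically to both preimages. The ``furthermore'' part is treated identically in both proofs as an immediate unwinding of the definitions.
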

\begin{proof}
	Let $x, y \in \convl(\gamma)$.
	We have only to prove that $x=y$.
	Assume for contradiction that $x \neq y$.
	Set $d=d_X(x,y)>0$.
	We have $B_X(x,d/2) \cap B_X(y,d/2)=\emptyset$ by the triangle inequality.
	Consider the definable set  $S:=\{0<t<\varepsilon\;|\; \gamma(t) \in B_X(x,d/2)\}$.
	Since $\mathcal M$ is locally o-minimal, there exists $\delta>0$ such that either $(0,\delta) \subseteq S$ or $(0,\delta) \cap S = \emptyset$.
	We have $(0,\delta) \subseteq S$ because $x \in \convl(\gamma)$.
	Since $B_X(x,d/2) \cap B_X(y,d/2)=\emptyset$, we have $\gamma((0,\delta))  \cap B_X(y,d/2)=\emptyset$.
	It contradicts the assumption that $y \in \convl(\gamma)$.
	The `furthermore' part is obvious from the definitions of $\convl(\gamma)$  and $\gamma \to p$.
\end{proof}

\begin{lemma}\label{lem:cont_equiv}
	Consider a definably complete locally o-minimal expansion of an ordered group $\mathcal M=(M,<,+,0,\ldots)$.
	Let $(X,d_X)$ and $(Y,d_Y)$ be definable metric spaces.
	A definable map $f:X \to Y$ is continuous if and only if, for any point $p \in X$ and any definable curve $\gamma$ completable in $X$ with $\gamma \to p$, the curve $f \circ \gamma$ is also completable in $Y$ and $f \circ \gamma \to f(p)$.
\end{lemma}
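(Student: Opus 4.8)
The plan is to prove the two implications separately. The forward implication (continuity implies the curve condition) is essentially a bookkeeping exercise on top of the convergence lemmas already in hand, while the reverse implication is the substantive part and is best handled by contraposition together with the definable curve selection property for metric spaces.

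For the forward direction, I would assume $f$ is continuous and take a definable curve $\gamma:(0,\varepsilon)\to X$ that is completable with $\gamma \to p$. Since $\gamma \to p$ is exactly the statement $p \in \convl(\gamma)$ for a curve whose left endpoint is zero, Lemma \ref{lem:conv_formula}(2) yields $f(p) \in f(\convl(\gamma)) \subseteq \convl(f \circ \gamma)$. In particular $\convl(f \circ \gamma)$ is nonempty, so $f \circ \gamma$ is completable in $Y$; and by Lemma \ref{lem:unique_lim} applied in $Y$ this set is the singleton $\{f(p)\}$, whose `furthermore' clause gives $f \circ \gamma \to f(p)$.

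For the reverse direction I would argue by contraposition. Suppose $f$ is not continuous at some point $p$. Unwinding the metric definition of the topologies $\tau_X,\tau_Y$, there is $\varepsilon_0>0$ such that for every $\delta>0$ the ball $B_X(p,\delta)$ contains a point whose $f$-image lies outside $B_Y(f(p),\varepsilon_0)$. I would then introduce the definable set $C = \{y \in X \mid d_Y(f(y),f(p)) \geq \varepsilon_0\}$. Here $p \notin C$, while every metric ball around $p$ meets $C$, so $p \in \partial_\tau C$. Invoking the definable curve selection property (Proposition \ref{prop:curve_selection2}; concretely, applying Lemma \ref{lem:definable_choice} to choose $\gamma(t) \in C \cap B_X(p,t)$) produces a definable curve $\gamma:(0,\delta)\to C$ with $p \in \convl(\gamma)$, that is, $\gamma \to p$. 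If the curve hypothesis were to hold, we would obtain $f \circ \gamma \to f(p)$, so the neighborhood $B_Y(f(p),\varepsilon_0)$ would be met by $f(\gamma((0,\delta')))$ for every small $\delta'$; but $\gamma$ takes values in $C$, forcing $d_Y(f(\gamma(t)),f(p)) \geq \varepsilon_0$ for all $t$, a contradiction. Hence the curve hypothesis fails, which is what the contrapositive requires.

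The main obstacle I anticipate lies in the reverse direction, specifically in choosing the test set $C$ and carefully matching the completability and convergence conventions (left endpoint zero) used in Lemma \ref{lem:unique_lim} and in the curve selection property, so that the produced curve genuinely satisfies $\gamma \to p$ while landing inside $C$. Once $C$ is set up and $p$ is shown to lie on its frontier, the contradiction is immediate; essentially all the content is in constructing this negation-of-continuity witness in a definable, metric-adapted way.
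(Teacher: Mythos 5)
Your proposal is correct and matches the paper's proof in its essentials: the substantive ``if'' direction is exactly the paper's contrapositive argument, using definable choice to produce a curve $\gamma$ with $\gamma(t) \in B_X(p,t)$ and $f(\gamma(t)) \notin B_Y(f(p),\varepsilon_0)$, so that $\gamma \to p$ while $f \circ \gamma \not\to f(p)$. The only divergence is in the easy ``only if'' direction, where the paper argues directly (using local o-minimality to force a tail of $\gamma$ into $B_X(p,\delta')$, giving the stronger tail-inclusion $f(\gamma((0,\delta))) \subseteq B_Y(f(p),\varepsilon)$), while you cite Lemma \ref{lem:conv_formula}(2); this is equally valid---indeed $f(p)\in\convl(f\circ\gamma)$ is by definition the statement $f\circ\gamma\to f(p)$, which also makes your additional appeal to Lemma \ref{lem:unique_lim} redundant.
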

\begin{proof}
	We first show the `only if' part.
	Consider a positive $\varepsilon>0$, $p \in X$  and a definable map $\gamma:(0,a) \to X$ with $\gamma \to p$.
	We have only to demonstrate that there exists $\delta>0$ such that $f(\gamma(0,\delta)) \subseteq B_Y(f(p),\varepsilon)$.
	Since $f$ is continuous, we can choose $\delta'>0$ so that $f(B_X(p,\delta')) \subseteq B_Y(f(p),\varepsilon)$.
	By local o-minimality, if we choose a sufficiently small $\delta>0$, we have either $\gamma((0,\delta)) \subseteq B_X(p,\delta')$ or $\gamma((0,\delta)) \cap  B_X(p,\delta')=\emptyset$.
	The former inclusion $\gamma((0,\delta)) \subseteq B_X(p,\delta')$ holds true because $\gamma \to p$.
	We finally get the inclusion $f(\gamma(0,\delta)) \subseteq f(B_X(p,\delta')) \subseteq  B_Y(f(p),\varepsilon)$.
	
	We next show the `if' part.
	We demonstrate the contraposition.
	Assume that $f$ is not continuous.
	There exist $p \in X$ and $\varepsilon>0$ such that, for any $\delta>0$, we can take $x \in B_X(p,\delta)$ with $f(x) \not\in B_Y(f(p),\varepsilon)$. 
	By Lemma \ref{lem:definable_choice}, we can take $\gamma:(0,\infty) \to X \setminus \{p\}$ such that $\gamma(t) \in B_X(p,t)$ and $f(\gamma(t)) \not\in B_Y(f(p), \varepsilon)$.
	It implies that $\gamma \to p$ and $f \circ \gamma \not\to f(p)$. 
\end{proof}


We also get the following theorem by applying Example \ref{ex:awk}.
\begin{theorem}[Definable open cover theorem]\label{thm:bounded_para}
	Let $\mathcal M=(M,<,+,0,\ldots)$ be a definably complete locally o-minimal expansion of an ordered group.
	Let $(X,\tau)$ be a definably compact definable topological space and $\{U_t|t \in P\}$ be a definable family of definable open subsets of $X$ such that $X=\bigcup_{t \in P}U_t$.
	There exists a definable bounded subset $Q$ of $P$ such that $X=\bigcup_{t \in Q}U_t$.
\end{theorem}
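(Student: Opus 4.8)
The plan is to recast the statement in terms of definable filtered collections of closed sets and then to invoke definable compactness through condition (3) of Theorem \ref{thm:compact}. Write $P \subseteq M^N$ and equip $M^N$ with the awkward linear order $\awk$ from Example \ref{ex:awk}. The feature of $\awk$ I intend to exploit is that it controls the sup-norm: if $t \awk b$, then the largest absolute value among the coordinates of $t$ is at most that of $b$. Indeed, $t \awk b$ forces $|\sigma_t(t)| \le_{\mathrm{lex}} |\sigma_b(b)|$, so in particular the leading entries satisfy $|\sigma_t(t)|_1 \le |\sigma_b(b)|_1$; and by the choice of $\sigma_t$ the leading entry $|\sigma_t(t)|_1$ equals $\max_i|t_i|=\|t\|$, and likewise for $b$. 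Hence for every $b \in M^N$ the definable set $Q_b := \{t \in P \mid t \awk b\}$ is bounded, being contained in $\{t \in M^N \mid \|t\| \le \|b\|\}$.

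First I would, for each $b \in M^N$, set $V_b := \bigcup_{t \in Q_b} U_t$ and $C_b := X \setminus V_b$. Since each $U_t$ is open, $V_b$ is an open definable subset of $X$ and $C_b$ is closed; and $b \awk b'$ implies $Q_b \subseteq Q_{b'}$, whence $V_b \subseteq V_{b'}$, so the sets $C_b$ decrease as $b$ grows in $\awk$. The parameterized family $\{C_b \mid b \in M^N\}$ is definable because $\bigcup_{b}\{b\}\times C_b$ is defined from the definable data $P$, $\awk$ and $\{U_t\}$. Because $\awk$ is a linear order, given $b_1,b_2 \in M^N$ their $\awk$-larger, say $b_3$, satisfies $Q_{b_1}\cup Q_{b_2} \subseteq Q_{b_3}$ and therefore $C_{b_3} \subseteq C_{b_1} \cap C_{b_2}$. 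Thus $\{C_b \mid b \in M^N\}$ is a definable filtered collection.

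The argument then concludes by contradiction. The case $X = \emptyset$ is handled at once by taking $Q = \emptyset$, so assume $X \neq \emptyset$ and suppose that no bounded subfamily covers $X$; this is equivalent to $C_b \neq \emptyset$ for every $b \in M^N$. Then $\{C_b \mid b \in M^N\}$ is a definable filtered collection of nonempty closed subsets of $X$, so definable compactness of $(X,\tau)$ yields a point $x^\ast \in \bigcap_{b \in M^N} C_b$. Since the family $\{U_t\}$ covers $X$, there is $t_0 \in P$ with $x^\ast \in U_{t_0}$; but $t_0 \awk t_0$ gives $t_0 \in Q_{t_0}$, hence $x^\ast \in U_{t_0} \subseteq V_{t_0}$, which contradicts $x^\ast \in C_{t_0} = X \setminus V_{t_0}$. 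Therefore some $C_b$ is empty, i.e. $V_b = X$, and $Q := Q_b$ is the desired bounded definable subset of $P$ satisfying $X = \bigcup_{t \in Q} U_t$.

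I expect the only genuinely delicate point to be the boundedness of $Q_b$: one must read off from the three clauses defining $\awk$ that its lexicographically leading comparison is exactly a comparison of sup-norms, so that an $\awk$-initial segment of $M^N$ is norm-bounded. Once this is in hand, the openness of $V_b$, the definability of the parameterized family, the filtered property coming from linearity of $\awk$, and the final appeal to Theorem \ref{thm:compact}(3) are all routine.
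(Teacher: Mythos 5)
Your proposal is correct and follows essentially the same route as the paper: both use the awkward linear order $\awk$ to form the definable filtered collection of closed sets $C_b = X \setminus \bigcup_{t \awk b} U_t$, derive a contradiction from Theorem \ref{thm:compact}(3) if all $C_b$ are nonempty, and take $Q$ to be the resulting $\awk$-initial segment. The only differences are cosmetic: you index the collection by all of $M^N$ rather than by $P$, and you spell out the norm-comparison property of $\awk$ (that $t \awk b$ forces $\|t\| \leq \|b\|$) which the paper asserts without detail.
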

\begin{proof}
	Consider the awkward linear order $\awk$ on $P$ defined in Example \ref{ex:awk}.
	The pair $(P,\awk)$ is a definable directed set by Lemma \ref{lem:obv1}.
	Set $C_t= \bigcap_{s \awk t} (X \setminus U_s)$ for all $t \in P$.
	We want to show that $C_t$ is empty for some $t \in P$.
	Assume for contradiction that $C_t$ is not empty for any $t \in P$.
	The family $\{C_t\;|\; t \in P\}$ is a definable filtered collection of nonempty closed subsets of $X$.
	It has a nonempty intersection by Theorem \ref{thm:compact}.
	It means that $\{U_t|t \in P\}$ is not an open cover of $X$.
	Contradiction.
	
	Take $v \in P$ so that $C_v=\emptyset$.
	Set $Q=\{t \in P\;|\; t \awk v\}$.
	It is a bounded definable set by the definition of the awkward linear order $\awk$.
\end{proof}

\begin{remark}
	Consider the case in which $\mathcal M=(M,<,+,0,\ldots)$ is an almost o-minimal expansion of an ordered group in Theorem \ref{thm:bounded_para}.
	See \cite{Fuji6} for the definition of almost o-minimal structures.
	Note that $\mathcal M$ is a definably complete locally o-minimal structure by \cite[Lemma 4.6]{Fuji6}.
	Let  $\myindr(\mathcal M)$ be the o-minimal structure defined in the last part of \cite[Section 4.3.1]{Fuji6}.
	Any bounded set definable in $\mathcal M$ is definable in $\myindr(\mathcal M)$.
	Theorem \ref{thm:bounded_para} says that, if a closed bounded definable set $X$ which has an open cover definable in $\mathcal M$, it has an open cover definable in $\myindr(\mathcal M)$ which is a subfamily of the given open cover.
\end{remark}

\section{Definably compact definable group}\label{sec:group}

As an application of previous sections, we study definable topological groups.
\begin{definition}
	Consider an expansion of an ordered group.
	A \textit{definable group} is a group $(G,e,\cdot)$ such that the set $G$ and the map $\cdot: (a,b) \mapsto a \cdot b$ are definable.
	We sometimes simply denote it by $G$.
	A definable group is \textit{definably simple} if it does not have a nontrivial definable normal subgroups.
	A \textit{definable topological group} $(G,e,\cdot,\tau)$ is a definable group equipped with a definable topology $\tau$ whose multiplication and inverse are continuous.   
	We also simply denote it by $(G,\tau)$.
\end{definition}

We first recall Pillay and Wencel's result.
\begin{remark}
	A definably complete locally o-minimal structure is a first-order topological structure in the sense of \cite{Pillay}.
	Consider a definably complete locally o-minimal expansion of an ordered group.
	A definable group has a definable topology with which the definable group is a definable topological group by \cite[Proposition 2.8(7)]{FKK} and \cite[Theorem 3.5]{Wencel}.
\end{remark}

The following lemma is not used in this paper, but it is worth mentioned.
\begin{lemma}\label{lem:group_basic1}
	Consider a definably complete locally o-minimal expansion of an ordered group $\mathcal M=(M,<,+,0,\ldots)$.
	Let $(G,\tau)$ be a definable topological group having definable curve selection property.
	Let $U$ be a definable neighborhood of the identity element $e$ in $G$ and $C$ be a definably compact definable subset of $G$.
	The intersection $U'=\bigcap_{h \in C} hUh^{-1}$ is also a neighborhood of $e$. 
\end{lemma}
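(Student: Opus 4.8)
The plan is to prove $e \in \myint_{\tau}(U')$ by contradiction, using the definable curve selection property together with the definable compactness of $C$. Observe first that $U'=\{x \in G\;|\; h^{-1}xh \in U \text{ for all } h \in C\}$, since $x \in hUh^{-1}$ is equivalent to $h^{-1}xh \in U$; in particular $e \in U'$ because $U$ is a neighborhood of $e$. Suppose $e \notin \myint_{\tau}(U')$. Then every definable neighborhood of $e$ meets $D:=G \setminus U'$, so $e \in \mycl_{\tau}(D)$; as $e \in U'$ gives $e \notin D \supseteq \myint_{\tau}(D)$, we conclude $e \in \partial_{\tau}D$.

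Applying the definable curve selection property to $D$ and the point $e \in \partial_{\tau}D$, I would obtain a definable curve $\gamma:(a,b) \to D$ with $a \in M$ and $e \in \convl(\gamma)$. For each $t$ the relation $\gamma(t) \notin U'$ yields some $h \in C$ with $h^{-1}\gamma(t)h \notin U$; applying Lemma \ref{lem:definable_choice} to the coordinate projection of $\{(t,h) \in (a,b) \times C\;|\; h^{-1}\gamma(t)h \notin U\}$ onto $(a,b)$, I would select such an $h$ definably, producing a definable curve $h:(a,b) \to C$ with $h(t)^{-1}\gamma(t)h(t) \notin U$ for every $t$.

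The decisive step is to produce a left-limit of $h$ lying in $C$. Regarding $h$ as a curve into $C$ equipped with its relative topology, which is a definably compact definable topological space, the implication $(3) \Rightarrow (1)$ of Theorem \ref{thm:compact} shows that the set of left convergences of $h$ computed in $C$ is nonempty; fix a point $h_0$ in it. Since every neighborhood of $h_0$ in $G$ restricts to a neighborhood of $h_0$ in $C$ and $h$ takes values in $C$, the point $h_0$ is also a left convergence of $h$ with respect to the ambient topology of $G$, so $h_0 \in \convl(h)$. Consider then the definable product curve $\sigma(t)=(\gamma(t),h(t))$ into $G \times G$. Lemma \ref{lem:conv_formula}(1) gives $(e,h_0) \in \convl(\gamma) \times \convl(h)=\convl(\sigma)$, and the conjugation map $\mu(x,y)=y^{-1}xy$, which is definable and continuous because inversion and multiplication are continuous, satisfies $\mu(e,h_0)=e$. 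Hence $e \in \mu(\convl(\sigma)) \subseteq \convl(\mu \circ \sigma)$ by Lemma \ref{lem:conv_formula}(2). However $(\mu \circ \sigma)(t)=h(t)^{-1}\gamma(t)h(t) \notin U \supseteq \myint_{\tau}(U)$ for all $t$, so $(\mu \circ \sigma)((a,t)) \cap \myint_{\tau}(U)=\emptyset$ for every $t$; taking the open neighborhood $\myint_{\tau}(U)$ of $e$ then contradicts $e \in \convl(\mu \circ \sigma)$.

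I expect the main obstacle to be exactly this limit step. Because $G$ is not assumed Hausdorff and $C$ need not be closed in $G$, one cannot extract a convergent limit of $h(t)$ directly in $G$. The remedy is to pass to the subspace $C$, invoke its definable compactness through Theorem \ref{thm:compact} to obtain a left-convergence point there, and then transfer it to the ambient space; the remaining bookkeeping with the product formula and the continuity of conjugation is routine.
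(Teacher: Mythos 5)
Your proposal is correct and takes essentially the same route as the paper: argue by contradiction, use definable curve selection to get $\gamma$ converging to $e$ outside $U'$, use Lemma \ref{lem:definable_choice} to get a definable curve $h(t)\in C$ witnessing the failure, use definable compactness of $C$ (via Theorem \ref{thm:compact}) to extract $h_0\in\convl(h)$, and use continuity of conjugation to derive the contradiction. The only difference is in packaging the last step: where the paper picks explicit neighborhoods $V_e$, $V_{h_0}$ and invokes local o-minimality to show both curves eventually lie in them, you push the convergence through the product and the conjugation map via Lemma \ref{lem:conv_formula}, inside which that same local o-minimality argument is hidden.
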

\begin{proof}
	Assume for contradiction that $U'$ is not a neighborhood of $e$.
	There exists a definable curve $\gamma:(a,b) \to U \setminus U'$ such that $a \in M$ and $e \in \convl(\gamma)$ by the definable curve selection property.
	Set $C_t=\{h \in C\;|\; \gamma(t) \not\in hUh^{-1}\}$ for all $a<t<b$.
	The definable set $C_t$ is not empty by the definition of $U'$.
	By Lemma \ref{lem:definable_choice}, there exists a definable curve $\eta:(a,b) \to C$ such that $\eta(t) \in C_t$ for all $a<t<b$.
	Since $C$ is definably compact, we can take $h_0 \in \convl(\eta)$ by Theorem \ref{thm:compact}.
	Consider the definable continuous map $C \times G \to G$ given by $(h,g) \mapsto h^{-1}gh$.
	There exist definable neighborhoods $V_e$ and $V_{h_0}$ of $e$ and $h_0$ in $G$ and $C$, respectively, such that $h^{-1}gh \in U$ for all $g \in V_e$ and $h \in V_{h_0}$.
	By the definition of the set of left convergence and local o-minimality, there exists $a<s<b$ such that $\gamma((a,s)) \subseteq V_e$ and $\eta((a,s)) \subseteq V_{h_0}$.
	Take $a<t<s$.
	We have $\eta(t)^{-1}\gamma(t)\eta(t) \in U$ and $\gamma(t) \in \eta(t)U\eta(t)^{-1}$.
	It contradicts the condition that $\eta(t) \in C_t$.
\end{proof}

We investigate that a definably simple definable topological group whose topology is definably compact.
We want to show that it is either discrete or definably connected.
A connected component of a topological group which contains the identity element is a normal subgroup.
The standard proof is as follows:
Let $G_0$ be the connected component of a topological group $G$ which contains the identity element $e$.
The restriction $m_0:G_0 \times G_0 \to G$ of the multiplication in $G$ to $G_0 \times G_0$ is continuous and $G_0 \times G_0$ is connected.
The inverse image $m_0^{-1}(G_0)$ is a nonempty closed and open subset of $G_0 \times G_0$, and we have $m_0(G_0 \times G_0)=G_0$.
It means that the multiplication is closed in $G_0$.
We can prove that the inverse is closed in $G_0$ in the same manner.
Since $gG_0g^{-1}$ is connected and contains the identity element $e$, we also have $gG_0g^{-1}=G_0$ for each $g \in G$.
It is trivial that a simple topological group is either discrete or connected.

The above classical proof is still valid for topological group definable in an o-minimal structure using definably connected components instead of connected components.
On the other hand, for definably complete locally o-minimal structure, the notion of `definably connected component' is currently unavailable.
The author does not know whether a connected component of a set definable in a locally o-minimal expansion of the set of reals $\mathbb R$ is definable or not.
We found an alternative proof which is only applicable when the definable topological group is definably compact.
We prove that a definably simple definable topological group such that it is regular and definably compact as a definable topological space is either discrete or definably connected.
A definable set is \textit{definably connected} in a definable topology if it has no nontrivial open and closed definable subset in the given topology.

\begin{theorem}\label{thm:simple_group}
	Consider a definably complete locally o-minimal expansion of an ordered group.
	A definably simple definable topological group $(G,\tau)$ which is regular, Hausdorff and definably compact as a definable topological space is either discrete or definably connected in the topology $\tau$.
\end{theorem}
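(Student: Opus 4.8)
The plan is to bypass the missing notion of definably connected component entirely, and instead to manufacture, whenever $G$ fails to be definably connected, a proper definable normal subgroup that definable simplicity cannot tolerate. Concretely, I will prove the contrapositive: if $G$ is not definably connected, then it is discrete. So suppose $G$ is not definably connected and fix a definable clopen set $W$ with $\emptyset\neq W\neq G$. Exactly one of $W$ and $G\setminus W$ contains the identity $e$; call it $V$. Thus $V$ is a definable clopen set with $e\in V\subsetneq G$, and since $V$ is closed in the definably compact space $(G,\tau)$, Lemma \ref{lem:compact_closed} shows that $V$ is itself definably compact.

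The key technical device is a tube-lemma-type fact that I would isolate separately: if $K\subseteq G$ is definably compact and $B\subseteq G\times K$ is definable and clopen, then the projection $\pi_1(B)\subseteq G$ is again clopen. Openness is automatic, since coordinate projections are open maps; closedness is the substance, and I would deduce it from the net characterization of definable compactness in Theorem \ref{thm:compact}(2). Given a point in the closure of $\pi_1(B)$, one uses a definable basis of neighborhoods to build a definable net into $\pi_1(B)$ converging to it, lifts this net through the compact fibre $K$ by Lemma \ref{lem:definable_choice}, passes to a convergent definable subnet of the lift, and takes limits inside the closed set $B$. Applying this with $K=V$ to $B=(G\times V)\cap m^{-1}(G\setminus V)$, where $m$ denotes the continuous multiplication and $G\setminus V$ is clopen, the set $A:=\{g\in G:\exists w\in V,\ gw\notin V\}=\pi_1(B)$ is clopen; hence $S:=\{g:gV\subseteq V\}=G\setminus A$ is clopen. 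As inversion is a homeomorphism, $S^{-1}$ is clopen as well, so $H:=S\cap S^{-1}=\{g\in G:gV=V\}$ is a clopen subgroup of $G$. Because $e\in V$, every $g\in S$ satisfies $g=ge\in gV\subseteq V$, so $H\subseteq V\subsetneq G$; thus $H$ is a proper definable clopen subgroup.

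Next I pass to the normal core $N:=\bigcap_{g\in G}gHg^{-1}=\{x\in G:\forall g\in G,\ g^{-1}xg\in H\}$, which is a definable normal subgroup contained in $H$ (take $g=e$), hence proper. To see that $N$ is clopen I apply the same projection lemma, now with the compact factor taken to be the whole group $G$: writing $c(x,g)=g^{-1}xg$ for the continuous conjugation map and noting $H$ is clopen, the set $D:=c^{-1}(G\setminus H)$ is clopen in $G\times G$, and $G\setminus N=\pi_1(D)$ is therefore clopen, so $N$ is clopen. Now definable simplicity applies: $N$ is a proper definable normal subgroup, so $N=\{e\}$. But then $\{e\}$ is open, and since each left translation $x\mapsto gx$ is a homeomorphism, every singleton is open; that is, $G$ is discrete. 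This establishes the contrapositive and hence the theorem.

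The heart of the argument, and the step I expect to be most delicate to write out carefully, is the projection lemma: projecting a clopen set along a definably compact factor yields a clopen set. Its closedness half is precisely the definable analogue of the classical tube lemma and is where definable compactness is genuinely used, through the subnet extraction of Theorem \ref{thm:compact}(2) together with definable choice. The conceptual payoff is that this lemma keeps both the stabilizer subgroup $H$ and its normal core $N$ definable and clopen without ever forming the subgroup generated by a neighborhood (an a priori infinite union), which is exactly what obstructs a naive translation of the classical ``connected component is normal'' proof. I expect regularity and the Hausdorff hypothesis to enter only lightly, namely to ensure that $G\times V$ and $G\times G$ carry the expected product topology and that closed definably compact sets behave as in Lemma \ref{lem:compact_closed}.
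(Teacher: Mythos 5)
Your proposal is correct, and it diverges from the paper's proof in an essential and advantageous way. Both arguments begin identically: fix a definable clopen $V \ni e$, form its stabilizer $H$ (the paper writes $H=H_1\cap H_2$ and proves openness of $H_1$ by projecting a closed subset of $G\times G_1$, exactly as you project $B=(G\times V)\cap m^{-1}(G\setminus V)$), pass to the normal core $N=\bigcap_{g\in G}gHg^{-1}$, and invoke definable simplicity to get $N=\{e\}$. The paths then split. The paper knows only that $H$ is open, and the intersection of the conjugates $gHg^{-1}$ over all $g$ has no reason to be open; so it splits into cases on whether $e$ is discrete and, in the non-discrete case, manufactures a contradiction from $N=\{e\}$ via the weak curve selection theorem (Theorem \ref{thm:curve_sel}) --- the step where regularity is genuinely used --- together with a conjugation-and-convergence argument through Lemma \ref{lem:conv_formula}. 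Your decisive extra step is the second application of the projection lemma: $G\setminus N=\pi_1\bigl(c^{-1}(G\setminus H)\bigr)$ is a projection along the definably compact factor $G$, so $N$ is open, and $N=\{e\}$ open yields discreteness at once, with no case split and no curve selection. Your tube lemma itself is sound and provable with the paper's tools (Theorem \ref{thm:compact}(2), Lemma \ref{lem:definable_choice}, and the neighborhood-basis directed sets of Example \ref{ex:basis}), with one caveat: extract the convergent subnet in the compact coordinate only, so that the first coordinate still converges to the given boundary point; if instead you take a subnet of the full lift in $G\times K$, you need Hausdorffness to identify its first-coordinate limit with the original one. (The paper reaches the closedness of such projections differently, via product compactness from Lemma \ref{lem:conv_formula}, Lemma \ref{lem:compact_image}, and the Hausdorff direction of Lemma \ref{lem:compact_closed}.) The net effect is that your route never uses regularity, and in its one-coordinate form never uses Hausdorffness either, so it proves the theorem under strictly weaker hypotheses; the price is establishing the projection lemma from scratch rather than quoting the paper's stock of compactness lemmas. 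Your closing guess about where the separation hypotheses enter is the only inaccuracy: product topologies need no such hypotheses, and the direction of Lemma \ref{lem:compact_closed} you use (a closed subset of a definably compact space is definably compact) is separation-free, so in your argument they are simply never needed.
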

\begin{proof}
	Let $\mathcal M=(M,<,+,0,\ldots)$ be the given definably complete locally o-minimal expansion of an ordered group.
	We only consider the topology $\tau$ in this proof.
	
	We first consider the case in which the identity element $e$ is discrete in $G$.
	The multiplication by an arbitrary fixed element induces a definable homeomorphism.
	If $e$ is a discrete point, then any element $g \in G$ is also a discrete point.
	Therefore, $(G,\tau)$ is discrete in this case.
	
	We next consider the case in which $e$ is not a discrete point.
	Assume for contradiction that $G$ is not definably connected.
	There exist nonempty definable closed and open subsets $G_1$ and $G_2$ of $G$ such that $G=G_1 \cup G_2$ and $G_1 \cap G_2 = \emptyset$.
	We may assume that $e \in G_1$ without loss of generality.
	Set $H=\{g \in G\;|\; gG_1=G_1\}$, which is the stabilizer of $G_1$.
	It is trivially a definable subgroup of $G$.
	It is also trivial that $H \subseteq G_1$ because the identity element $e$ belongs to $G_1$.
	In particular, we have $H \neq G$.
	We show that $H$ is open.
	Set $H_1=\{g \in G\;|\; gG_1 \subseteq G_1\}$ and $H_2=\{g \in G\;|\; g^{-1}G_1 \subseteq G_1\}$.
	We obviously have $H=H_1 \cap H_2$.
	Therefore, we have only to prove both $H_1$ and $H_2$ are open.
	We only prove that $H_1$ is open.
	We can prove that $H_2$ is open similarly.
	Since $G_1$ is closed in $G$, it is definably compact by Lemma \ref{lem:compact_closed}.
	Note that $G \times G_1$ is definably compact by Lemma \ref{lem:conv_formula}(1) and Theorem \ref{thm:compact}. 
	Consider the definable maps $\rho:G \times G_1 \to G$ and $\pi:G \times G_1 \to G$ defined by $\rho(g,h)=gh$ and $\pi(g,h)=g$, respectively.
	It is obvious that $H_1=G \setminus \pi(\rho^{-1}(G_2))$.
	The inverse image $\rho^{-1}(G_2)$ is closed because $G_2$ is closed and $\rho$ is continuous.
	Therefore, it is definably compact by Lemma \ref{lem:compact_closed}.
	The image $\pi(\rho^{-1}(G_2))$ is definably compact by Lemma \ref{lem:compact_image} and it is closed in $G$ by Lemma \ref{lem:compact_closed}.
	We have demonstrated that $H_1$ is open.
	We have also demonstrated that $H$ is open.
	
	We next consider the intersection $N=\bigcap_{g \in G}gHg^{-1}$.
	A standard group theoretic argument implies that $N$ is a normal subgroup of $G$ because $H$ is a subgroup of $G$.
	We omit the details.
	It is also trivial that $N$ is definable.
	Since $H \neq G$, we also have $N \neq G$.
	We get $N=\{e\}$ because $G$ is definably simple.
	Since $\{e\}$ is not a discrete point by the assumption, there exists a definable curve $\gamma:(0,\infty) \to G \setminus \{e\}$ such that $e \in \convl(\gamma) \cup \convr(\gamma)$ by Theorem \ref{thm:curve_sel}.
	We only consider the case in which $e \in \convl(\gamma)$.
	The other case is similar.
	The equality $N=\{e\}$ implies that, for any $t>0$, there exists $g \in G$ such that $\gamma(t) \not\in gHg^{-1}$.
	By Lemma \ref{lem:definable_choice}, there exists a definable curve $\iota:M^{>0} \to G$ such that $\gamma(t) \not\in \iota(t)H\iota(t)^{-1}$.
	Since $G$ is definably compact, the set $\convl(\iota)$ is not empty.
	We take an element $g_0 \in \convl(\iota)$.
	Consider the map $\eta:G \times G \to G$ defined by $\eta(g,h)=h^{-1}gh$ and the definable curve $\gamma':M^{>0} \to G$ defined by $\gamma'(t)=\eta(\gamma(t),\iota(t))=\iota(t)^{-1}\gamma(t)\iota(t)$.
	We have $e=g_0^{-1}eg_0 \in \convl(\gamma')$ by Lemma \ref{lem:conv_formula}(1),(2) because $e \in \convl(\gamma)$ and $g_0 \in \convl(\iota)$.
	On the other hand, $\gamma'(t)$ is not contained in $H$ for each $t>0$.
	Since $H$ is open, we have $e \in \convl(\gamma') \subseteq G \setminus H$.
	It is a contradiction.
\end{proof}

We next study definable quotient groups.

\begin{definition}
%

Let $(G,\tau_G)$ be a definable topological group and $(X,\tau_X)$ be a definable topological space.
Consider a definable continuous left $G$-action $\mathfrak a:G \times X \to X$.
The pair of a definable topological space $(Q,\tau_Q)$ and a definable continuous map $\pi:(X,\tau_X) \to (Q,\tau_Q)$ is the \textit{definable (left) quotient of $X$ by  $G$} if, for any definable topological space $(T,\tau_{T})$ and a definable continuous map $\varphi:(X,\tau_X) \rightarrow (T,\tau_T)$ satisfying the equality $\varphi(x)=\varphi(gx)$ for any $g \in G$ and $x \in X$, there exists a definable continuous map $\psi:(Q,\tau_Q) \rightarrow (T,\tau_T)$  such that $\varphi=\psi\circ \pi$.
We also call $(Q,\tau_Q)$ the \textit{definable quotient space of $X$ by  $G$}.
The definable quotient space is unique up to definable homeomorphism.
We denote the definable quotient space by $X/G$. 
We call the map $\pi$ the \textit{canonical projection} of $X$ onto $Q$. 
We define the definable quotient space for definable continuous right $G$-action in the same manner.

When $G$ is a definable subgroup of a definable topological group $(G_0,\tau_{G_0})$, the multiplication in $G_0$ induces the natural definable continuous left and right actions on $(G_0,\tau_{G_0})$ by $(G,\tau_G)$, where $\tau_G$ is the relative topology of $\tau_{G_0}$ on $G$.
	When $G$ is normal, $(Q,\tau_Q)$ is a definable topological group by Proposition \ref{prop:exist_quo} below.
	We call $(Q,\tau_Q)$ the \textit{definable quotient group of $G_0$ by  $G$} in this case.
	The topology $\tau_Q$ is called \textit{definable quotient topology}.
\end{definition}

\begin{proposition}\label{prop:exist_quo}
	Consider a definably complete locally o-minimal expansion of an ordered group.
	Let $(G,\tau_G)$ be a definable topological group and $(X,\tau_X)$ be a definable topological space.
	Consider a definable continuous left $G$-action $\mathfrak a:G \times X \to X$.
	Then there exists the definable quotient of $X$ by  $G$.
	Furthermore, if $X$ is a definable group, $G$ is a definable normal subgroup of $X$ and the action $\mathfrak a$ is induced from the multiplication in $X$, the definable quotient space is a definable topological group whose multiplication is defined by $g_1G \cdot g_2G = g_1g_2G$.
\end{proposition}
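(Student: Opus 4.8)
The plan is to realize the quotient concretely as a definable set of orbit representatives, equip it with a definable version of the quotient topology, verify the universal property, and then in the group case check that the induced operations are definable and continuous.

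First I would record that the orbit equivalence relation $E$ on $X$, given by $x \sim_E y \Leftrightarrow \exists g \in G\; y = \mathfrak a(g,x)$, is a definable equivalence relation. By the second assertion of Lemma \ref{lem:definable_choice} there is a definable set $S \subseteq X$ meeting each orbit in exactly one point; write $r : X \to S$ for the definable retraction sending $x$ to the representative of its orbit. I set $Q = S$ and $\pi = r$, which is definable and surjective. To topologize $Q$, fix a definable open base $\{B_u \mid u \in \Lambda\}$ of $\tau_X$ and form the saturations $GB_u := \mathfrak a(G \times B_u)$. Each $GB_u$ is definable uniformly in $u$, is $G$-invariant, and is open because every translation $\mathfrak a(g,\cdot)$ is a homeomorphism, so $GB_u = \bigcup_{g \in G} \mathfrak a(g,\cdot)(B_u)$ is a union of open sets. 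The key observation is that every $G$-invariant open subset $U$ of $X$ satisfies $U = GU = \bigcup \{GB_u \mid B_u \subseteq U\}$, so $\{GB_u \mid u \in \Lambda\}$ is a base for the $G$-invariant open sets. Since $G$-invariant open subsets of $X$ correspond bijectively to open sets of the quotient topology via $U \mapsto U \cap S$ (with $\pi^{-1}(U \cap S) = U$), the definable family $\{S \cap GB_u \mid u \in \Lambda\}$ is a definable open base of the quotient topology $\tau_Q$ on $Q$.

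Continuity of $\pi$ is then immediate from $\pi^{-1}(S \cap GB_u) = GB_u$. For the universal property, given a $G$-invariant definable continuous $\varphi : X \to T$, I take $\psi := \varphi|_S$: then $\psi \circ \pi = \varphi$ because $r(x)$ lies in the orbit of $x$, and $\psi$ is continuous since for open $W \subseteq T$ the set $\varphi^{-1}(W)$ is $G$-invariant open, hence a union of $GB_u$'s, whence $\psi^{-1}(W) = S \cap \varphi^{-1}(W)$ is open in $\tau_Q$. Uniqueness of $\psi$ follows from surjectivity of $\pi$. For the furthermore part I would first note that, as $G$ is normal, the coset multiplication $xG \cdot yG = xyG$ is well defined, so on $Q = S$ the operations are $\bar\mu(s_1,s_2) = r(s_1 s_2)$ and $\bar\iota(s) = r(s^{-1})$, which are definable. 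The crux is continuity. To prove $\bar\mu$ continuous at $(s_1,s_2)$, I take a basic neighborhood $S \cap GB$ of $r(s_1 s_2)$, so $s_1 s_2 \in GB$; by continuity of multiplication in $X$ I find basic opens $B_{u_1} \ni s_1$, $B_{u_2} \ni s_2$ with $B_{u_1} B_{u_2} \subseteq GB$, and claim $\bar\mu\big((S \cap GB_{u_1}) \times (S \cap GB_{u_2})\big) \subseteq S \cap GB$. Here normality does the work: if $t_i \in S \cap GB_{u_i}$ then $t_i = g_i b_i$ with $g_i \in G$ and $b_i \in B_{u_i}$, and $t_i G = b_i G$ because $g_i b_i G = G b_i = b_i G$; hence $t_1 t_2 G = b_1 b_2 G \subseteq GB$ since $b_1 b_2 \in GB$ and $GB$ is $G$-invariant, so $\bar\mu(t_1,t_2) = r(t_1 t_2) \in S \cap GB$. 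The continuity of $\bar\iota$ is entirely analogous, using $t^{-1} G = b^{-1} G$ for $t = gb$; the group axioms themselves are the standard quotient-group facts, which I would only cite.

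The main obstacle is the topological step, in two places. First, one must manufacture a genuinely definable open base for the quotient topology, since the naive quotient topology is not a priori definable; the saturation trick $GB_u = \mathfrak a(G \times B_u)$ resolves this by producing a definable base for exactly the $G$-invariant open sets. Second, the continuity of the quotient multiplication requires absorbing the ambient group elements $g_i$ coming from the saturations into the cosets, which is precisely where the normality of $G$ is indispensable; without it the identities $t_i G = b_i G$ and $t_1 t_2 G = b_1 b_2 G$ break down.
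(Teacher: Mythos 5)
Your construction is exactly the paper's: both use Lemma \ref{lem:definable_choice} to pick a definable set of orbit representatives, take $\pi$ to be the map onto the representative, and topologize the quotient by the definable base of saturations of basic opens of $\tau_X$ intersected with the representative set (your $S \cap GB_u$ is the paper's $\pi(U) = \bigl(\bigcup_{g \in G} gU\bigr) \cap Q$). The paper omits the verification of the universal property and of the ``furthermore'' part as straightforward; your filled-in details, including the use of normality to get $t_1t_2G = b_1b_2G$ in the continuity argument, are correct.
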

\begin{proof}
	Since the definable set $E=\{(x_1,x_2) \in X \times X\;|\; \exists g \in G,\ x_2=gx_1 \}$ is a definable equivalence relation, there exists a definable subset $Q$ of $X$ such that $Q$ intersect with the orbit $Gx$ at exactly one point for any $x \in X$ by Lemma \ref{lem:definable_choice}.
	Let $\pi:X \to Q$ be the definable map sending $x \in X$ to the unique intersection point of $Q$ with $Gx$.
	We define a definable topology $\tau_Q$ on $Q$.
	A definable subset $S$ of $Q$ is defined to be open if and only if $\pi^{-1}(S)=\bigcup_{g \in G}gS$ is open.
	Note that $\pi^{-1}(\pi(U))=\bigcup_{g \in G}gU$ for any subset $U$ of $X$.
	It is trivial that the topology $\tau_Q$ is the definable topology having the definable open base $\{\pi(U)=(\bigcup_{g \in G}gU) \cap Q\;|\; U \in \mathcal B\}$, where $\mathcal B$ is a definable open base of the topology $\tau_G$.
	It is straightforward to demonstrate that the pair $((Q,\tau_Q),\pi)$ is the definable quotient. 
	The proof of `furthermore' part is also straightforward.
	We omit their proofs.
\end{proof}
%

Let $(G,\tau_G)$ be a definable topological group and $H$ be a closed definable normal subgroup of $G$.
When $G$ is definably compact, both $H$ and the quotient topological space $G/H$ are definably compact by Lemma \ref{lem:compact_image} and Lemma \ref{lem:compact_closed}.
The following theorem says that its inverse also holds true.
\begin{theorem}\label{thm:quotient}
	Consider a definably complete locally o-minimal expansion of an ordered group.
	Let $(G,\tau_G)$ be a definable topological group and $H$ be a definable normal subgroup of $G$.
	Let $\tau_H$ be the relative topology on $H$.
	The definable topological group $(G,\tau_G)$ is definably compact if both the definable topological subspace $(H,\tau_H)$ and the definable quotient group $(Q:=G/H,\tau_Q)$ are definably compact.
\end{theorem}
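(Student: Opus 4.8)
The plan is to prove that the canonical projection $\pi \colon G \to Q = G/H$ is definably proper and then to apply properness to the definably compact space $Q$ itself. Indeed, once $\pi$ is known to be definably proper, the hypothesis that $Q$ is definably compact yields that $\pi^{-1}(Q) = G$ is definably compact, which is exactly the assertion. To obtain properness I would invoke Lemma \ref{lem:proper}: it suffices to check that $\pi$ is a definable continuous map whose fibers are all definably compact and that $\pi$ is definably closed. Definability and continuity of $\pi$ are furnished by Proposition \ref{prop:exist_quo}.

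The fibers are immediate. For $g \in G$ the fiber $\pi^{-1}(\pi(g))$ equals the coset $gH$, and left translation $h \mapsto gh$ is a definable homeomorphism of $(H,\tau_H)$ onto $gH$ with its relative topology; since $(H,\tau_H)$ is definably compact, Lemma \ref{lem:compact_image} shows each fiber is definably compact. It then remains to prove that $\pi$ is definably closed. By the definition of the quotient topology in Proposition \ref{prop:exist_quo}, for a closed definable $C \subseteq G$ the image $\pi(C)$ is closed in $Q$ if and only if its saturation $\pi^{-1}(\pi(C)) = CH$ is closed in $G$ (here $CH = HC$ because $H$ is normal). Thus the whole theorem reduces to the following ``tube lemma'': if $C$ is closed and $H$ is definably compact, then $CH$ is closed.

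This last point is the main obstacle, and I would prove it using the filtered-collection characterization of definable compactness (Theorem \ref{thm:compact}(3)) for $H$ together with the continuity of the map $m \colon G \times G \to G$, $m(a,b) = ab^{-1}$. Fix $x \in \mycl_{\tau}(CH)$ and a definable basis of neighborhoods $\{A_u \mid u \in \Omega_x\}$ of $x$. For each $u$ set $H_u = \{h \in H \mid Ch \cap A_u \neq \emptyset\}$; each $H_u$ is nonempty because $A_u$ meets $CH$, and $\{\mycl_{\tau_H}(H_u) \mid u \in \Omega_x\}$ is a definable filtered collection of nonempty closed subsets of $H$ (it is filtered since a neighborhood basis is). Definable compactness of $H$ produces a point $h^{*}$ in the intersection. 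I then claim $c^{*} := x(h^{*})^{-1} \in C$: given any neighborhood $W$ of $c^{*} = m(x,h^{*})$, continuity of $m$ supplies neighborhoods $U \ni x$ and $V \ni h^{*}$ with $m(U \times V) \subseteq W$; choosing $u$ with $A_u \subseteq U$ and, since $h^{*} \in \mycl_{\tau_H}(H_u)$, a point $h' \in (V \cap H) \cap H_u$, one finds $c' \in C$ with $c'h' \in A_u \subseteq U$, whence $c' = m(c'h', h') \in W$. Hence every neighborhood of $c^{*}$ meets $C$, so $c^{*} \in \mycl_{\tau}(C) = C$ and $x = c^{*} h^{*} \in CH$, proving $CH$ closed.

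With $CH$ closed, $\pi$ is definably closed, Lemma \ref{lem:proper} makes $\pi$ definably proper, and applying properness to the definably compact set $Q$ gives that $G = \pi^{-1}(Q)$ is definably compact. I would emphasize that this argument requires neither regularity nor the Hausdorff property of $G$: only the definable compactness of $H$ and $Q$ and the continuity of the group operations are used, the compactness of $H$ entering solely through the filtered-intersection step that powers the tube lemma.
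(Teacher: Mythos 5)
Your proposal is correct, and it takes a genuinely different route from the paper. The paper works with the curve and net characterizations of definable compactness (conditions (1) and (2) of Theorem \ref{thm:compact}): it lifts a convergence point $q$ of $\pi\circ\gamma$ in $Q$, builds via Lemma \ref{lem:definable_choice} a definable net $\eta$ into the coset $gH$ indexed by the directed set $M^{>0}\times M^{>0}\times(a,b)$, extracts a convergent definable subnet using definable compactness of $gH$, and verifies that its limit $\overline{h}$ lies in $\convl(\gamma)$; this requires the neighborhood-basis machinery of Theorem \ref{thm:countability}(2), hence ultimately Theorem \ref{thm:downward}. You instead argue structurally: the fibers of $\pi$ are translates of $H$, hence definably compact by Lemma \ref{lem:compact_image}; the crux is your ``tube lemma'' that $CH$ is closed whenever $C$ is closed and $H$ is definably compact, which you prove directly from the filtered-collection definition of definable compactness (condition (3)) together with continuity of $(a,b)\mapsto ab^{-1}$ --- the definable analogue of the classical fact that the product of a closed set and a compact set in a topological group is closed; then Lemma \ref{lem:proper} makes $\pi$ definably proper and $G=\pi^{-1}(Q)$ is definably compact. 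Your argument checks out in every step (the family $\{\mycl_{\tau_H}(H_u)\}_{u\in\Omega_x}$ is indeed a definable filtered collection since the definable neighborhood basis at $x$ from Example \ref{ex:basis} is filtered and closures are uniformly definable from the definable open base). What your route buys: it is shorter, avoids definable choice, nets, and the cofinality theorems entirely, and it isolates a reusable intermediate fact --- the projection $G\to G/H$ is definably closed and definably proper when $H$ is definably compact --- which generalizes the metric-space statement of Proposition \ref{prop:inv_dist}(1) to arbitrary definable topological groups. What the paper's route buys is mainly a demonstration of the curve/net calculus developed in Sections \ref{sec:directed} and \ref{sec:compact}; it proves nothing stronger.
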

\begin{proof}
	Let $\pi:G \to Q$ be the canonical projection of $G$ onto $Q$.
	Let $\gamma:(a,b) \to G$ be a definable curve.
	We have only to demonstrate that both $\convl(\gamma)$ and $\convr(\gamma)$ are nonempty by Theorem \ref{thm:compact}.
	We only show that $\convl(\gamma)$ is not empty.
	The proof for $\convr(\gamma)$ is similar and it is omitted.
	
	Since $Q$ is definably compact, we can take $q \in \convl(\pi \circ \gamma)$ by Theorem \ref{thm:compact}.
	We fix a point $g \in G$ with $\pi(g)=q$.
	Set $H'=gH=Hg$.
	The definable topological space $H'$ is definably compact because $H$ is definably compact and the definable map given by $g_1 \mapsto gg_1$ is a definable homeomorphism.
	
	 By Theorem \ref{thm:countability}(2), there exists a definable basis $\{A_{(s,t)}\;|\;(s,t) \in M^{>0} \times M^{>0}\}$ of open neighborhoods of the identity element $e$ in $G$ satisfying $A_{(s',t')} \subseteq A_{(s,t)}$ whenever $(s',t') \unlhd (s,t)$.
	 It is obvious that $\{g'A_{(s,t)}\;|\;(s,t) \in M^{>0} \times M^{>0}\}$ is a definable basis of open neighborhoods of $g'$ in $G$ for any point $g'$ in $G$.
	  We can easily deduce the equality 
	 \begin{equation}
	 	\pi^{-1}(\pi(gA_{(s,t)}))=\bigcup_{h \in H} hgA_{(s,t)}=\bigcup_{h \in H} ghA_{(s,t)}=\bigcup_{h' \in H'}h'A_{(s,t)} \label{eq:aaa}
	 \end{equation}
	 using the assumption that $H$ is normal.
	 We omit the proof of the above equality.
	The equality (\ref{eq:aaa}) and the definition of definable quotient topology imply that the family  $\{\pi(gA_{(s,t)})\;|\;(s,t) \in M^{>0} \times M^{>0}\}$ is a definable basis of open neighborhoods of $q$ in $Q$.
	 Since $q \in \convl(\pi\circ\gamma)$, we have $\pi(\gamma((a,u))) \cap \pi(gA_{(s,t)})\neq \emptyset$ for any $(s,t) \in M^{>0} \times M^{>0}$ and $u \in (a,b)$.	 
	 Applying the equality (\ref{eq:aaa}), there exists $h \in H'$ such that $$\gamma((a,u)) \cap hA_{(s,t)} \neq \emptyset.$$
	 It implies that $B(s,t,u):= \{h \in H'\;|\; \gamma((a,u)) \cap hA_{(s,t)} \neq \emptyset\}$ is not empty for any $(s,t) \in M^{>0} \times M^{>0}$ and $u \in (a,b)$.
	 
	 We can take a definable map $\eta:M^{>0} \times M^{>0} \times (a,b) \to H'$ so that $\eta(s,t,u) \in B(s,t,u)$ by Lemma \ref{lem:definable_choice}.
	 In particular, we have $$\gamma((a,u)) \cap \eta(s,t,u)A_{(s,t)} \neq \emptyset.$$
	 We define the preorder $\unlhd'$ on $M^{>0} \times M^{>0} \times (a,b)$.
	 We define $(s',t',u') \unlhd' (s,t,u) $ if and only if $(s',t') \unlhd (s,t)$ and $u' \leq u$.
	 The pair $(M^{>0} \times M^{>0} \times (a,b), \unlhd')$ is a directed set.
	 Therefore, we may consider that $\eta$ is a definable net from the directed set $(M^{>0} \times M^{>0} \times (a,b), \unlhd')$ to $H'$.
	 Since $H'$ is definably compact, there exist a definable downward cofinal map $f:(\Omega', \prec') \to (M^{>0} \times M^{>0} \times (a,b), \unlhd')$ and a point $\overline{h} \in H'$ such that $\eta \circ f$ converges to $\overline{h}$ by Theorem \ref{thm:compact}.
	 
	 We want to show that $\overline{h} \in \convl(\gamma)$.
	 We fix an arbitrary definable neighborhood $B$ of $e$ in $G$ and an arbitrary point $u \in (a,b)$.
	 We have only to demonstrate $$\gamma((a,u)) \cap \overline{h}B \neq \emptyset.$$
	 We may assume that $B$ is open by taking a smaller open neighborhood of $e$ contained in $B$ if necessary.
	 Let $m:G \times G \to G$ be the multiplication in $G$ defined by $m(g_1,g_2)=g_1 \cdot g_2$.
	 Since $m$ is continuous and $m(e,e)=e$, the inverse image $m^{-1}(B)$ is an open neighborhood of the point $(e,e)$.
	 We can take definable open neighborhoods $B_1$ and $B_2$ of $e$ in $G$ such that $B_1 \times B_2 \subseteq m^{-1}(B)$ by the definition of product topology.
	 It implies that $B_1 \cdot B_2 \subseteq B$.
	 Since $\{A_{(s,t)}\;|\;(s,t) \in M^{>0} \times M^{>0}\}$ is a definable basis of open neighborhoods of $e$ in $G$, there exists $(s,t) \in M^{>0} \times M^{>0}$ with $A_{(s,t)} \subseteq B_2$.
	 We fix such an $(s,t)$.
	 
	 Since $\eta \circ f$ converges to $\overline{h}$, there exists $x_1 \in \Omega'$ such that $\eta(f(x)) \in \overline{h}B_1$ whenever $x \in \Omega'$ and $x \preceq' x_1$.
	 On the other hand, by the definition of downward cofinal maps, there exists $x_2 \in \Omega'$ such that $f(x) \unlhd' (s,t,u)$ whenever $x \in \Omega'$ and $x \preceq' x_2$.
	 We can take $\widetilde{x} \in \Omega'$ with $\widetilde{x} \preceq' x_1$ and $\widetilde{x} \preceq' x_2$ because $(\Omega',\preceq')$ is a directed set.
	 Put $(\overline{s},\overline{t},\overline{u})=f(\widetilde{x}) \in M^{>0} \times M^{>0} \times (a,b)$.
	 We get $(\overline{s},\overline{t}) \unlhd (s,t)$, $\overline{u} \leq u$ and $\eta(\overline{s},\overline{t},\overline{u}) \in \overline{h}B_1$. 
	 We have
	 \begin{align*}
	 	\eta(\overline{s},\overline{t},\overline{u}) A_{(\overline{s},\overline{t})} \subseteq \overline{h}B_1\cdot A_{(\overline{s},\overline{t})} \subseteq \overline{h}B_1\cdot A_{(s,t)} \subseteq \overline{h}B_1 \cdot B_2 \subseteq  \overline{h}B
	 \end{align*}
     using the inclusions $\eta(\overline{s},\overline{t},\overline{u}) \in \overline{h}B_1$, $A_{(\overline{s},\overline{t})} \subseteq A_{(s,t)} \subseteq B_2$ and $B_1\cdot B_2 \subseteq B$.
     The inclusion $\gamma((a,\overline{u})) \subseteq \gamma((a,u))$ is obvious and we obtain the inclusion$$\gamma((a,\overline{u})) \cap \eta(\overline{s},\overline{t},\overline{u}) A_{(\overline{s},\overline{t})}  \subseteq \gamma((a,u)) \cap \overline{h}B.$$
     It implies that $\gamma((a,u)) \cap \overline{h}B$ is not empty because $\gamma((a,\overline{u})) \cap \eta(\overline{s},\overline{t},\overline{u}) A_{(\overline{s},\overline{t})}$ is not empty by the definition of the definable map $\eta$.
     We have completed the proof.
\end{proof}

We next consider the case in which the definable topology is induced from a definable distance function. 
\begin{definition}
		A \textit{definable metric group} $(G,d_G)$ is a definable metric space such that $(G,\tau_G)$ is a definable topological group, where $\tau_G$ is the topology induced from $d_G$. 
\end{definition}

The assertion (2) of the following proposition claims that Theorem \ref{thm:quotient} holds true im more general setting when $G$ and $X$ are definable metric spaces.
\begin{proposition}\label{prop:inv_dist}
	Consider a definably complete locally o-minimal expansion of an ordered group $\mathcal M=(M,<,+,0,\ldots)$.
	Let $(G,d_G)$ be a definably compact definable metric group and $(X,d_X)$ be a definable metric space.
	Consider a definable continuous left $G$-action on $X$.
	Let $\pi:X \to Q$ be the definable quotient map.
	Then the following assertions hold true.
	\begin{enumerate}
		\item[(1)] The map $\pi$ is a definably closed map.
		\item[(2)] The definable topological space $X$ is definably compact if and only if the quotient space $Q$ is definably compact.
	\end{enumerate}
\end{proposition}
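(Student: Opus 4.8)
The plan is to establish assertion (1) first and then deduce assertion (2) from it by a properness argument. For (1), recall that, by the definition of the definable quotient topology, the image $\pi(C)$ of a definable set $C \subseteq X$ is closed in $Q$ if and only if its full preimage $\pi^{-1}(\pi(C)) = \bigcup_{g \in G} gC$ is closed in $X$. Writing $GC := \bigcup_{g \in G} gC$, proving that $\pi$ is definably closed thus amounts to showing that $GC$ is closed in $X$ whenever the definable set $C$ is closed.

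To see that $GC$ is closed, I would take a point $x$ in the closure of $GC$ with $x \notin GC$; then $x$ lies in the frontier $\partial_\tau(GC)$. Since $(X,d_X)$ is a definable metric space, it enjoys the definable curve selection property by Proposition \ref{prop:curve_selection2}, so after translating the parameter I obtain a definable curve $\gamma:(0,\delta) \to GC$ with $x \in \convl(\gamma)$. Applying the definable choice lemma (Lemma \ref{lem:definable_choice}) to the definable relation expressing $\gamma(t) = \mathfrak a(g,c)$ for $g \in G$ and $c \in C$, I choose definable curves $g:(0,\delta) \to G$ and $c:(0,\delta) \to C$ with $\gamma(t) = \mathfrak a(g(t),c(t))$. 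Because $G$ is definably compact, $\convl(g) \neq \emptyset$ by Theorem \ref{thm:compact}; fix $g_0 \in \convl(g)$. Continuity of the inverse map of $G$ together with Lemma \ref{lem:conv_formula}(2) gives $g_0^{-1} \in \convl(g^{-1})$, whence $(g_0^{-1},x) \in \convl(g^{-1}) \times \convl(\gamma) = \convl(t \mapsto (g(t)^{-1},\gamma(t)))$ by Lemma \ref{lem:conv_formula}(1). Since the action $\mathfrak a$ is continuous and $\mathfrak a(g(t)^{-1},\gamma(t)) = c(t)$, Lemma \ref{lem:conv_formula}(2) yields $g_0^{-1}x = \mathfrak a(g_0^{-1},x) \in \convl(c)$. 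As $\convl(c)$ is contained in the closure of $c((0,\delta)) \subseteq C$ and $C$ is closed, we get $g_0^{-1}x \in C$, so $x \in g_0 C \subseteq GC$, a contradiction. This proves that $GC$ is closed and hence that $\pi$ is definably closed.

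For (2), the forward implication is immediate: if $X$ is definably compact, then $Q = \pi(X)$ is definably compact by Lemma \ref{lem:compact_image}, since $\pi$ is continuous and surjective. For the converse I would invoke Lemma \ref{lem:proper}. The map $\pi$ is definable, continuous and, by assertion (1), definably closed. Each fiber $\pi^{-1}(q)$ equals the orbit $Gq$, which is the image of the definably compact group $G$ under the continuous map $g \mapsto \mathfrak a(g,q)$ and is therefore definably compact by Lemma \ref{lem:compact_image}. Hence $\pi$ is definably proper by Lemma \ref{lem:proper}, and consequently $X = \pi^{-1}(Q)$ is definably compact because $Q$ is definably compact.

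The \emph{main obstacle} is the closedness of the saturation $GC$ in part (1); once this is in hand, part (2) is purely formal. The delicate point there is to transfer the left-convergence of $\gamma$ to the left-convergence of the ``slice'' curve $c(t) = \mathfrak a(g(t)^{-1},\gamma(t))$ inside the closed set $C$, and the device that makes this work is the interplay of the definable compactness of $G$ (to extract a limit point $g_0$ of $g$) with the compatibility of left-convergence under products and continuous maps recorded in Lemma \ref{lem:conv_formula}. It is worth noting that uniqueness of limits in the metric space is not needed: it suffices that the single point $(g_0^{-1},x)$ lie in the product of the two left-convergence sets.
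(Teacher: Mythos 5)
Your proof is correct and follows essentially the same route as the paper: both prove (1) by taking a frontier point of the saturation $\bigcup_{g\in G}gC$, selecting a definable curve into it via Proposition \ref{prop:curve_selection2}, splitting the curve as $g(t)\cdot c(t)$ by Lemma \ref{lem:definable_choice}, extracting a limit $g_0$ of $g$ from the definable compactness of $G$, and using continuity and the closedness of $C$ to reach a contradiction, after which (2) is deduced exactly as in the paper from Lemmas \ref{lem:compact_image} and \ref{lem:proper}. The only cosmetic difference is that you track convergence with the sets $\convl$ and Lemma \ref{lem:conv_formula}, whereas the paper uses the completable-curve formalism ($\gamma\to p$) via Lemmas \ref{lem:unique_lim} and \ref{lem:cont_equiv}; both devices do the same job here.
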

\begin{proof}
	We first demonstrate the assertion (1).
	Let $C$ be a definable closed subset of $X$.
	We have only to demonstrate that $C':=\pi^{-1}(\pi(C))=\bigcup_{g \in G}gC$ is closed by the definition of definable quotient topology.
	Assume for contradiction that $C'$ is not closed.
	Take a point $p \in \partial X$.
	There exists a definable curve $\gamma: (0,\infty) \to C'$ such that $\gamma \to p$ by Proposition \ref{prop:curve_selection2} and Lemma \ref{lem:unique_lim}.
	There exist definable curves $h:(0,\infty) \to G$ and $f:(0,\infty) \to C$ with $\gamma(t)=h(t) \cdot f(t)$ for each $t>0$ by Lemma \ref{lem:definable_choice}.
	Since $G$ is definably compact, we have $h \to g_0$ for some $g_0 \in G$ by Theorem \ref{thm:compact} and Lemma \ref{lem:unique_lim}.
	Since $f(t)=(h(t))^{-1}\cdot \gamma(t)$, we have $f \to g_0^{-1} \cdot p$ by  Lemma \ref{lem:cont_equiv}.
	We have $g_0^{-1} \cdot p \in C$ because $C$ is closed.
	It implies that $p \in g_0C \subseteq C'$, which is a contradiction.
	
	The assertion (2) is easily proven.
	The `only if' part follows from Lemma \ref{lem:compact_image}.
	The fiber of every point in $Q$ under $\pi$ is definably compact.
	The `if' part follows from this fact, the assertion (1) and Lemma \ref{lem:proper}.
\end{proof}

We  prove that definable quotient $X/G$ is a definable metric space when both $G$ and $X$ are definable metric spaces and $G$ is definably compact.

\begin{theorem}\label{thm:inv_dist}
	Consider a definably complete locally o-minimal expansion of an ordered group $\mathcal M=(M,<,+,0,\ldots)$.
	Let $(G,d_G)$ be a definably compact definable metric group and $(X,d_X)$ be a definable metric space.
	Consider a definable continuous left $G$-action on $X$.
	Then the definable quotient space $(Q,\tau_Q)$ of $X$ by $G$ is a definable metric space.
\end{theorem}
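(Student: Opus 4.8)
The plan is to metrize $Q$ by a quotient metric coming from a $G$-invariant metric on $X$. Let $\pi:X\to Q$ be the canonical projection. Since the action need not be by isometries, I first replace $d_X$ by the definable function $\tilde d(x,y)=\sup_{g\in G}d_X(gx,gy)$. For fixed $x,y$ the map $g\mapsto d_X(gx,gy)$ is definable and continuous, so by Lemma \ref{lem:compact_image} its image is a definably compact subset of $M$; by Proposition \ref{prop:affine_case} this image is closed and bounded, whence the supremum is attained and lies in $M$. Thus $\tilde d$ is a well-defined definable function. It is symmetric and satisfies the triangle inequality, both inherited from $d_X$ after taking the supremum over $g$; it is positive definite since $\tilde d(x,y)=0$ forces $d_X(x,y)=0$ by taking $g=e$; and it is $G$-invariant because $\tilde d(hx,hy)=\sup_g d_X(ghx,ghy)=\sup_{g'}d_X(g'x,g'y)=\tilde d(x,y)$.

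Next I would show that $\tilde d$ induces the topology $\tau_X$. As $\tilde d(x,y)\ge d_X(x,y)$ (take $g=e$), every $d_X$-open set is $\tilde d$-open, so it remains to prove that the action is equicontinuous along the diagonal: for every $x$ and $\varepsilon>0$ there is $\delta>0$ with $d_X(x,y)<\delta\Rightarrow\tilde d(x,y)<\varepsilon$. This is the main obstacle. I would argue by contradiction: if it fails, Lemma \ref{lem:definable_choice} yields a definable curve $\eta$ with $\eta\to x$ together with a definable curve $g(\cdot)$ into $G$ realizing the supremum, so that $d_X(g(t)x,g(t)\eta(t))\ge\varepsilon$. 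Since $G$ is definably compact, Theorem \ref{thm:compact} and Lemma \ref{lem:unique_lim} give $g\to g_0$ for some $g_0\in G$, and then continuity of the action and of $d_X$, via Lemma \ref{lem:cont_equiv}, forces $d_X(g(t)x,g(t)\eta(t))\to d_X(g_0x,g_0x)=0$, a contradiction. Hence $\tilde d$ induces $\tau_X$, so the quotient topology $\tau_Q$ built from $(X,\tau_X)$ is unchanged when $\tilde d$ is used in place of $d_X$.

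I would then define the candidate metric on $Q$ by $d_Q(\pi(x),\pi(y))=\inf_{g\in G}\tilde d(x,gy)$. By $G$-invariance this equals the orbit gap $\inf_{g_1,g_2}\tilde d(g_1x,g_2y)$, so it depends only on $\pi(x)$ and $\pi(y)$, is symmetric, and is definable. The infimum is attained because $g\mapsto\tilde d(x,gy)$ is continuous with definably compact image, so $d_Q(\pi(x),\pi(y))=0$ gives $\tilde d(x,g_0y)=0$ for some $g_0$, i.e. $x=g_0y$ and $\pi(x)=\pi(y)$. The triangle inequality follows from invariance: choosing near-optimal $g,g'$ with $\tilde d(x,gy)$ and $\tilde d(y,g'z)$ close to the respective distances, one has $\tilde d(x,gg'z)\le\tilde d(x,gy)+\tilde d(gy,gg'z)=\tilde d(x,gy)+\tilde d(y,g'z)$, and the errors are let to tend to $0$. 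Thus $(Q,d_Q)$ is a definable metric space.

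Finally I would verify that $d_Q$ induces $\tau_Q$. For a representative $y$ and $\varepsilon>0$ one computes $\pi^{-1}(B_Q(\pi(y),\varepsilon))=\bigcup_{g\in G}B_{\tilde d}(gy,\varepsilon)$, which is open in $X$, so every $d_Q$-ball is $\tau_Q$-open. Conversely, let $S\subseteq Q$ be $\tau_Q$-open and $y\in\pi^{-1}(S)$. The orbit $Gy$ is definably compact by Lemma \ref{lem:compact_image} and lies in the open saturated set $\pi^{-1}(S)$, whose complement $F$ is closed. The definable function $g\mapsto\operatorname{dist}_{\tilde d}(gy,F)=\inf_{z\in F}\tilde d(gy,z)$ is continuous, being $1$-Lipschitz in its point argument, and is strictly positive on $G$; by Lemma \ref{lem:compact_image} and Proposition \ref{prop:affine_case} its image is a closed bounded subset of $M^{>0}$, hence attains a positive minimum $\varepsilon$. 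Then $\bigcup_{g}B_{\tilde d}(gy,\varepsilon)\subseteq\pi^{-1}(S)$, which on passing to $\pi$ and using saturation gives $B_Q(\pi(y),\varepsilon)\subseteq S$. Therefore the metric topology of $d_Q$ coincides with $\tau_Q$, completing the proof.
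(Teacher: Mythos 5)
Your proof is correct, and its skeleton is the same as the paper's: invariantize the metric by $\widetilde{d}(x,y)=\sup_{g\in G}d_X(gx,gy)$, using definable compactness of $G$ (via Lemma \ref{lem:compact_image} and Proposition \ref{prop:affine_case}) to see that the supremum is attained, then metrize $Q$ by $d_Q(\pi(x),\pi(y))=\inf_{g\in G}\widetilde{d}(x,gy)$ and check that this induces $\tau_Q$. You deviate from the paper in two sub-arguments, both soundly. First, where the paper proves directly that $\widetilde{d_X}$ is continuous on $X\times X$ at arbitrary pairs $(p_1,p_2)$ (a curve argument whose contradiction comes from replacing the attaining curve $\eta(t)$ by $hg^{-1}\eta(t)$), you prove only equicontinuity at the diagonal, conclude that $\widetilde{d}$ induces $\tau_X$, and then get full continuity for free, since a metric is continuous with respect to its own topology; this is a mild simplification built from the same ingredients (Lemma \ref{lem:definable_choice}, Theorem \ref{thm:compact}, Lemmas \ref{lem:unique_lim} and \ref{lem:cont_equiv}). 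Second, for the implication ``$\tau_Q$-open implies $d_Q$-open,'' the paper argues via attainment of the infimum over the definably compact fiber (its Claim 2) together with $d_X\le\widetilde{d_X}$, so that a single ball $B_X(x,\varepsilon)\subseteq\pi^{-1}(U)$ already yields $B_Q(\pi(x),\varepsilon)\subseteq U$; you instead minimize the distance-to-complement function $g\mapsto\inf_{z\in F}\widetilde{d}(gy,z)$ over $G$, the classical ``a compact set inside an open set has positive distance to the complement'' argument. Your version additionally needs definable completeness to know this infimum exists and the Lipschitz continuity of the distance function, whereas the paper's version is slightly leaner; both rest on the same compactness inputs, so the two proofs are essentially equal in strength and generality.
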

\begin{proof}
	We want show that the $G$-invariant definable function $\widetilde{d_X}:X \times X \to M_{\geq 0}$ given by $$\widetilde{d_X}(x,y)=\sup \{d_X(gx,gy)\;|\;g \in G\}$$ is a continuous distance function.
	Let $x,y \in X$ be fixed.
	We first demonstrate the following claim:
	
	\medskip
	\textbf{Claim 1.} Let $x,y \in X$ . There exists $g_{x,y} \in G$ such that $$\widetilde{d_X}(x,y)=\sup \{d_X(gx,gy)\;|\;g \in G\}=d_X(g_{x,y}x,g_{x,y}y).$$
	\begin{proof}[Proof of Claim 1.]
		Consider the function $\rho_{x,y}:G \ni g \mapsto d_X(gx,gy) \in M$.
		It is obviously continuous.
		By Lemma \ref{lem:compact_image}, its image $\rho_{x,y}(G)$ is definably compact.
		Note that a definable set in $M$ is definably compact if and only if it is bounded and closed by Proposition \ref{prop:affine_case}.
		It implies that there exists $g_{x,y} \in G$ such that $\widetilde{d_X}(x,y)=\sup \{d_X(gx,gy)\;|\;g \in G\}=d_X(g_{x,y}x,g_{x,y}y)$.
	\end{proof}
	
	We first demonstrate that $\widetilde{d_X}$ is continuous.
	Let $p_1,p_2$ be arbitrary points in $X$ and $\gamma_i:(0,\varepsilon) \to X$ be definable curves in $X$ with $\gamma_i \to p_i$ for $i=1,2$.
	We have only to demonstrate that $\widetilde{d_X}\circ (\gamma_1,\gamma_2) \to \widetilde{d_X}(p_1,p_2)$ by Lemma \ref{lem:cont_equiv}.
	There exists a definable map $\eta:(0,\varepsilon) \to G$ with $\widetilde{d_X}(\gamma_1(t),\gamma_2(t)) = d_X(\eta(t)\gamma_1(t),\eta(t)\gamma_2(t))$ by Claim 1 and Lemma \ref{lem:definable_choice}.
	Since $(G,\tau_G)$ is definably compact, the set $\convl(\eta)$ is not empty by Theorem \ref{thm:compact}.
	It is a singleton by Lemma \ref{lem:unique_lim}.
	Take the unique point $g \in \convl(\eta)$.
	We have $\eta \to g$ by Lemma \ref{lem:unique_lim}.
	We have only to demonstrate $\widetilde{d_X}(p_1,p_2)=d_X(gp_1,gp_2)$.
	In fact, if this equality holds true, we get $\widetilde{d_X}(\gamma_1(t),\gamma_2(t)) = d_X(\eta(t)\gamma_1(t),\eta(t)\gamma_2(t)) \to d_X(gp_1,gp_2)=\widetilde{d_X}(p_1,p_2)$ as $t \to 0$ by Lemma \ref{lem:cont_equiv}.
	
	Assume for contradiction that $\widetilde{d_X}(p_1,p_2) \neq d_X(gp_1,gp_2)$.
	There exists $h \in G$ such that $\widetilde{d_X}(p_1,p_2)=d_X(hp_1,hp_2)>d_X(gp_1,gp_2)$ by Claim 1.
	Set $\eta'(t) = hg^{-1}\eta(t)$.
	We have $\eta'(t) \to h$.
	We get $ d_X(\eta'(t)\gamma_1(t),\eta'(t)\gamma_2(t)) \to d_X(hp_1,hp_2)$ by Lemma \ref{lem:cont_equiv}.
	Since $d_X(hp_1,hp_2)>d_X(gp_1,gp_2)$, we have $d_X(\eta'(t)\gamma_1(t),\eta'(t)\gamma_2(t))>d_X(\eta(t)\gamma_1(t),\eta(t)\gamma_2(t))$ for sufficiently small $t>0$.
	This inequality contradicts the definition of $\eta$.
	
	We next demonstrate that $\widetilde{d_X}$ is a distance function.
	It is obvious that $\widetilde{d_X}(x,y)=0$ if and only if $x=y$.
	It is also obvious that $\widetilde{d_X}(x,y)=\widetilde{d_X}(y,x)$ for $x,y \in X$.
	We demonstrate the triangle inequality $\widetilde{d_X}(x,y)+\widetilde{d_X}(y,z) \geq \widetilde{d_X}(x,z)$ for $x,y,z \in X$.
	There exists $g \in G$ with $\widetilde{d_X}(x,z)=d_X(gx,gz)$ by Claim 1.
	We have $d_X(x,y) \geq d_X(gx,gy)$ and $\widetilde{d_X}(y,z) \geq d_X(gy,gz)$ by the definition of $\widetilde{d_X}$.
	We get  $\widetilde{d_X}(x,z)=d_X(gx,gz) \leq d_X(gx,gy)+d_X(gy,gz) \leq \widetilde{d_X}(x,y)+\widetilde{d_X}(y,z)$.
	We have demonstrated that the $G$-invariant definable function $\widetilde{d_X}:X \times X \to M_{\geq 0}$ is a continuous distance function.

	\medskip
	Let $\pi:X \to Q$ be the canonical projection.
	We define the definable function $d_Q:Q \times Q \to M$ by 
	\begin{equation}
	d_Q(\overline{x},\overline{y})=\inf\{\widetilde{d_X}(x,y)\;|\;y \in \pi^{-1}(\overline{y})\}, \label{eq:bbb}
	\end{equation}
	where $x \in X$ with $\overline{x}=\pi(x)$.
	Note that $\pi^{-1}(\overline{y})$ is definably compact for any $\overline{y} \in Q$ because $G$ is definably compact.
	As in the proof of  Claim 1, we get the following claim because $\widetilde{d_X}$ is continuous.
	We omit the details of its proof.
	\medskip
	
	\textbf{Claim 2.}
	For any $x \in \pi^{-1}(\overline{x})$, there exists $y \in \pi^{-1}(\overline{y})$ such that $d_Q(\overline{x},\overline{y})=\widetilde{d_X}(x,y)$.
	\medskip
	
		Since $\widetilde{d_X}$ is $G$-invariant, the right hand of (\ref{eq:bbb}) is independent from the choice of $x$ and the function $d_Q$ is well-defined.
	
	We first demonstrate that the definable function $d_Q$ is a definable distance function.
	Let $\overline{x}, \overline{y}, \overline{z} \in Q$ be arbitrary elements.
	It is obvious that $d_Q(\overline{x},\overline{x})=0$.
	We first show that $\overline{x}=\overline{y}$ if $d_Q(\overline{x},\overline{y})=0$.
	There exists $x \in X$ and $y \in X$ with $d_Q(\overline{x},\overline{y})=\widetilde{d_X}(x,y)$ by Claim 2.
	Since $\widetilde{d_X}$ is a distance function, we have $x=y$.
	It implies that $\overline{x}=\overline{y}$.
	
	We next demonstrate the equality $d_Q(\overline{x},\overline{y})=d_Q(\overline{y},\overline{x})$.
	We fix an element $y_0 \in X$ with $\pi(y_0)=\overline{y}$.
	Using the $G$-invariance of $\widetilde{d_X}$, we have 
	\begin{align*}
		d_Q(\overline{x},\overline{y})&=\inf\{\widetilde{d_X}(x,y)\;|\;y \in \pi^{-1}(\overline{y})\}=\inf\{\widetilde{d_X}(x,gy_0)\;|\;g \in G\}\\
		&=\inf\{\widetilde{d_X}(g^{-1}x,y_0)\;|\;g \in G\} = \inf\{\widetilde{d_X}(y_0, gx)\;|\;g \in G\}\\
		&= d_Q(\overline{y},\overline{x}).
	\end{align*}
	We show the triangle inequality $d_Q(\overline{x},\overline{y})+ d_Q(\overline{y},\overline{z}) \geq d_Q(\overline{x},\overline{z})$.
	We fix $x \in \pi^{-1}(\overline{x})$.
	We can take $y \in \pi^{-1}(\overline{y})$ with $d_Q(\overline{x},\overline{y})=\widetilde{d_X}(x,y)$ by Claim 2.
	In the same manner, there exists $z \in \pi^{-1}(\overline{z})$ with $d_Q(\overline{y},\overline{z})=\widetilde{d_X}(y,z)$.
	Since $\widetilde{d_X}$ is a distance function, we obtain
	\begin{align*}
		d_Q(\overline{x},\overline{y})+ d_Q(\overline{y},\overline{z}) &=\widetilde{d_X}(x,y)+\widetilde{d_X}(y,z) \geq \widetilde{d_X}(x,z)\\& \geq \inf\{\widetilde{d_X}(x,z')\;|\; z' \in \pi^{-1}(\overline{z})\}
		=d_Q(\overline{x},\overline{z}).
	\end{align*}
	
	The remaining task is to demonstrate that the distance function $d_Q$ induces the quotient topology $\tau_Q$.
	For that purpose, we have only to show that a definable subset $U$ of $Q$ is open in the definable metric space $(Q,d_Q)$ if and only if $\pi^{-1}(U)$ is open in the definable metric space $(X,d_X)$ by the definition of quotient topology.
	We first demonstrate that $U$ is open assuming that $\pi^{-1}(U)$ is open.
	Take an arbitrary point $\overline{x} \in U$ and a point $x \in \pi^{-1}(\overline{x})$.
	Since $\pi^{-1}(U)$ is open, there exists $\varepsilon >0$ such that $B_X(x,\varepsilon):=\{y \in X\;|\; d_X(x,y)<\varepsilon\} \subseteq \pi^{-1}(U)$.
	We show that $B_Q(\overline{x},\varepsilon):=\{\overline{y} \in Q\;|\;d_Q(\overline{x},\overline{y})<\varepsilon\} \subseteq U$.
	Take an arbitrary point $\overline{y} \in B_Q(\overline{x},\varepsilon)$.
	There exists $y \in \pi^{-1}(\overline{y})$ with $\widetilde{d_X}(x,y) < \varepsilon$ by Claim 2.
	We have $d_X(x,y) \leq \widetilde{d_X}(x,y)$ by the definition of the function $\widetilde{d_X}$.
	They imply that $ y \in B_X(x,\varepsilon) \subseteq \pi^{-1}(U)$.
	We have $\overline{y} =\pi(y) \in \pi(\pi^{-1}(U)) \subseteq U$.
	
	We next prove the opposite implication.
	 Assume that $U$ is open.
	 Take an arbitrary point $x \in \pi^{-1}(U)$ and set $\overline{x}=\pi(x)$.
	 There exists $\varepsilon >0$ such that $B_Q(\overline{x},\varepsilon) \subseteq U$ because $U$ is open in the definable metric space $(Q,d_Q)$.
	 We show the inclusion $\widetilde{B_X}(x,\varepsilon):=\{y \in X\;|\; \widetilde{d_X}(x,y)<\varepsilon\} \subseteq \pi^{-1}(U)$.
	 Let $y \in \widetilde{B_X}(x,\varepsilon)$.
	 We have $d_Q(\overline{x},\pi(y))=\inf\{\widetilde{d_X}(x,y')\;|\; y' \in \pi^{-1}(\pi(y))\} \leq \widetilde{d_X}(x,y)<\varepsilon$.
	 It implies that $\pi(y) \in B_Q(\overline{x},\varepsilon) \subseteq U$.
	 We have $y \in \pi^{-1}(U)$ and we have demonstrated the inclusion $\widetilde{B_X}(x,\varepsilon) \subseteq \pi^{-1}(U)$. 
	 Since $\widetilde{d_X}$ is continuous, the definable set $\widetilde{B_X}(x,\varepsilon) $ is open.
	 Therefore, $\pi^{-1}(U)$ is also open.
\end{proof}

\begin{corollary}
	Let  $\mathcal M=(M,<,+,0,\ldots)$, $(G,d_G)$ and $(X,d_X)$ be as in Theorem \ref{thm:inv_dist}.
	Consider a definable continuous left $G$-action on $X$.
	For any $G$-invariant definable closed subset $A$ of $X$, there exists a definable continuous $G$-invariant function $f:X \to M$ whose zero set is $A$.
\end{corollary}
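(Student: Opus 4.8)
The plan is to descend to the quotient $Q = X/G$ and pull back the distance-to-$A$ function from there, since all the heavy lifting has already been done in the previous results. By Theorem \ref{thm:inv_dist} the quotient carries a definable distance function $d_Q$ inducing the quotient topology $\tau_Q$, so $(Q,d_Q)$ is a definable metric space; write $\pi:X\to Q$ for the canonical projection. First I would record two facts about $A$. Since $A$ is $G$-invariant, the orbits being exactly the fibers of $\pi$ give $\pi^{-1}(\pi(A))=\bigcup_{g\in G}gA=A$. Since $A$ is definable and closed and $\pi$ is definably closed by Proposition \ref{prop:inv_dist}(1), the image $\pi(A)$ is a closed definable subset of $Q$.

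Next I would put $\bar f(\bar x)=\inf\{d_Q(\bar x,\bar a)\;|\;\bar a\in\pi(A)\}$, the infimum existing in $M$ by definable completeness, and set $f=\bar f\circ\pi:X\to M$. This $f$ is definable because $d_Q$, $\pi$ and $\pi(A)$ are all definable, and it is $G$-invariant because it factors through $\pi$: indeed $f(gx)=\bar f(\pi(gx))=\bar f(\pi(x))=f(x)$ for all $g\in G$ and $x\in X$.

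For the zero set I would use that $f\geq 0$ and that $f(x)=0$ precisely when $\pi(x)$ lies in the $d_Q$-closure of $\pi(A)$; as $\pi(A)$ is $\tau_Q$-closed and $d_Q$ induces $\tau_Q$, this closure is $\pi(A)$ itself, whence $f(x)=0\Leftrightarrow \pi(x)\in\pi(A)\Leftrightarrow x\in\pi^{-1}(\pi(A))=A$. For continuity I would establish the Lipschitz estimate $|\bar f(\bar x)-\bar f(\bar y)|\leq d_Q(\bar x,\bar y)$ from the triangle inequality for $d_Q$ proved in Theorem \ref{thm:inv_dist}: for each $\bar a\in\pi(A)$ one has $d_Q(\bar x,\bar a)\leq d_Q(\bar x,\bar y)+d_Q(\bar y,\bar a)$, and taking the infimum over $\bar a$ yields $\bar f(\bar x)\leq d_Q(\bar x,\bar y)+\bar f(\bar y)$, the symmetric inequality giving the estimate. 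This makes $\bar f$, and hence $f=\bar f\circ\pi$, continuous, either directly by an $\varepsilon$-$\delta$ argument or through Lemma \ref{lem:cont_equiv}.

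The only point beyond routine bookkeeping is verifying that the distance-to-a-set function behaves as in the classical metric setting inside this framework: that the defining infimum exists (guaranteed by definable completeness), that its vanishing is equivalent to membership in the closure, and that $\pi(A)$ is closed for the \emph{metric} topology of $d_Q$ rather than merely for $\tau_Q$. This last subtlety is exactly where the identification of $d_Q$ with the quotient topology from Theorem \ref{thm:inv_dist} and the definable closedness of $\pi$ from Proposition \ref{prop:inv_dist}(1) are indispensable; with these in hand, the remaining verifications of $G$-invariance, definability and Lipschitz continuity are straightforward.
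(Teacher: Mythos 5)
Your proposal is correct and follows essentially the same route as the paper: pass to the definable metric quotient $(Q,d_Q)$ from Theorem \ref{thm:inv_dist}, note $\pi(A)$ is closed and $A=\pi^{-1}(\pi(A))$ by $G$-invariance, take the distance-to-$\pi(A)$ function on $Q$, and pull it back along $\pi$. The only differences are cosmetic: the paper deduces closedness of $\pi(A)$ directly from the definition of the quotient topology rather than from Proposition \ref{prop:inv_dist}(1), and it leaves the continuity and zero-set verifications (your Lipschitz estimate and closure argument) as obvious.
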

\begin{proof}
	The definable quotient space $(Q,\tau_Q)$ of $X$ by $G$ is a definable metric space by Theorem \ref{thm:inv_dist}. 
	Let $\pi:X \to Q$ be the canonical projection and $d_Q$ be the definable distance function on $Q$ which induces the topology $\tau_Q$.
	The image $\pi(A)$ is closed by the definition of the quotient topology and the equality $A=\pi^{-1}(\pi(A))$ holds true because $A$ is $G$-invariant.
	Let $\widetilde{f}:Q \to M$ be the definable continuous function given by $\widetilde{f}(x):=\inf\{d_Q(x,a)\;|\; a \in \pi(A)\}$.
	It is obvious that $\widetilde{f}^{-1}(0)=\pi(A)$. 
	The composition $f:= \widetilde{f} \circ \pi$ satisfies the requirement of the corollary.
\end{proof}

\textbf{Acknowledgment.}
The author first employed the assumption that the structure is definably complete locally o-minimal in Lemma \ref{lem:prepre}.
However, the weaker assumption that the structure is definably complete is sufficient.
An anonymous referee informed the author of it.

\end{document}